\newtheorem{Theorem}{Theorem}[section]
\newtheorem{Proposition}[Theorem]{Proposition}
\newtheorem{Lemma}[Theorem]{Lemma}
\newtheorem{Corollary}[Theorem]{Corollary}
\theoremstyle{definition}
\newtheorem{Definition}[Theorem]{Definition}
\newtheorem{Remark}[Theorem]{Remark}
\newcommand{\bTheorem}[1]{
\begin{Theorem} \label{T#1} }
\newcommand{\eT}{\end{Theorem}}
\newcommand{\bProposition}[1]{
\begin{Proposition} \label{P#1}}
\newcommand{\eP}{\end{Proposition}}
\newcommand{\bLemma}[1]{
\begin{Lemma} \label{L#1} }
\newcommand{\eL}{\end{Lemma}}
\newcommand{\bCorollary}[1]{
\begin{Corollary} \label{C#1} }
\newcommand{\eC}{\end{Corollary}}
\newcommand{\bRemark}[1]{
\begin{Remark} \label{R#1} }
\newcommand{\eR}{\end{Remark}}
\newcommand{\bDefinition}[1]{
\begin{Definition} \label{D#1} }
\newcommand{\eD}{\end{Definition}}
\newcommand{\dif}{\mathrm{d}}
\newcommand{\mf}{\mathscr{F}}
\newcommand{\prst}{\mathbb{P}}
\newcommand{\p}{\mathbb{P}}
\newcommand{\ind}{\mathbf{1}}
\newcommand{\mn}{\mathbb{N}}
\newcommand{\mt}{\mathbb{T}^3}
\newcommand{\bu}{\mathbf u}
\newcommand{\mtor}{\mathbb{T}^3}
\newcommand{\bfw}{\mathbf{w}}
\newcommand{\bfu}{\mathbf{u}}
\newcommand{\bfU}{\mathbf{U}}
\newcommand{\bfV}{\mathbf{V}}
\newcommand{\bfv}{\mathbf{v}}
\newcommand{\bfq}{\mathbf{q}}
\newcommand{\bfg}{\mathbf{g}}
\newcommand{\g}{\gamma}
\newcommand{\G}{\Gamma}
\newcommand{\s}{\sigma}
\newcommand{\ve}{\varepsilon}
\renewcommand{\a}{\alpha}
\renewcommand{\d}{\delta}
\renewcommand{\k}{\kappa}
\newcommand{\bFormula}[1]{
\begin{equation} \label{#1}}
\newcommand{\eF}{\end{equation}}
\newcommand{\Ov}[1]{\overline{#1}}
\newcommand{\DC}{C^\infty_c}
\newcommand{\vr}{\varrho}
\newcommand{\vre}{\vr_\ep}
\newcommand{\vue}{\vu_\ep}
\newcommand{\vu}{\vc{u}}
\newcommand{\vc}[1]{{\bf #1}}
\newcommand{\Div}{{\rm div}_x}
\newcommand{\Grad}{\nabla_x}
\newcommand{\tn}[1]{\mathbb{#1}}
\newcommand{\dx}{\,{\rm d} {x}}
\newcommand{\dt}{\,{\rm d} t }
\newcommand{\intO}[1]{\int_{\mathbb T^3} #1 \dx}
\newcommand{\D}{{\rm d}}
\newcommand{\ep}{\varepsilon}
\newcommand{\R}{\mathbb{R}}
\newcommand{\E}{\mathbb{E}}
\newcommand{\expe}[1]{ \mathbb{E} \left[ #1 \right] }
\newcommand{\bas}{\big( \Omega, \mathfrak{F}, ( \mathfrak{F}_t )_{t \geq 0} ,\prst \big)}
\definecolor{Cgrey}{rgb}{0.85,0.85,0.85}
\definecolor{Cblue}{rgb}{0.50,0.85,0.85}
\definecolor{Cred}{rgb}{1,0,0}
\definecolor{fancy}{rgb}{0.10,0.85,0.10}
\newcommand\Cbox[2]{%
    \newbox\contentbox%
    \newbox\bkgdbox%
    \setbox\contentbox\hbox to \hsize{%
        \vtop{
            \kern\columnsep
            \hbox to \hsize{%
                \kern\columnsep%
                \advance\hsize by -2\columnsep%
                \setlength{\textwidth}{\hsize}%
                \vbox{
                    \parskip=\baselineskip
                    \parindent=0bp
                    #2
                }%
                \kern\columnsep%
            }%
            \kern\columnsep%
        }%
    }%
    \setbox\bkgdbox\vbox{
        \color{#1}
        \hrule width  \wd\contentbox %
               height \ht\contentbox %
               depth  \dp\contentbox
        \color{black}
    }%
    \wd\bkgdbox=0bp%
    \vbox{\hbox to \hsize{\box\bkgdbox\box\contentbox}}%
    \vskip\baselineskip%
}
\date{}
\begin{document}

%%%%%%%%%%%%%%%%%%%%%%%%%%%%%%%%

\title[Stationary solutions to the stochastic compressible Navier-Stokes system]{Stationary solutions to the compressible Navier-Stokes system driven by stochastic forces}

\author{Dominic Breit}
\address[D. Breit]{Department of Mathematics, Heriot-Watt University, Riccarton Edinburgh EH14 4AS, UK}

\author{Eduard Feireisl}
\address[E. Feireisl]{Institute of Mathematics of the Academy of Sciences of the Czech Republic, \v Zitn\' a 25, CZ-115 67 Praha 1, Czech Republic}

% \and Eduard Feireisl

\author{Martina Hofmanov\' a}
\address[M. Hofmanov\'a]{Technical University Berlin, Institute of Mathematics, Stra\ss e des 17. Juni 136, 10623 Berlin, Germany}

\author{Bohdan Maslowski}
\address[B. Maslowski]{Charles University in Prague, Faculty of Mathematics and Physics, Sokolovsk\' a 83, 186 75 Praha 8, Czech Republic}

\thanks{The research of E.F.~leading to these results has received funding from the
European Research Council under the European Union's Seventh
Framework Programme (FP7/2007-2013)/ ERC Grant Agreement
320078. B.M. has been supported by GACR grant no. 15-08819S. The Institute of Mathematics of the Academy of Sciences of
the Czech Republic is supported by RVO:67985840.}

\begin{abstract}

We study the long-time behavior of solutions to a stochastically driven Navier-Stokes system describing the motion of a compressible viscous fluid
driven by a temporal multiplicative white noise perturbation.
The existence of stationary solutions is established in the framework of Lebesgue--Sobolev spaces pertinent to the class of weak martingale solutions.
The methods are based on new global-in-time estimates and a combination of deterministic and stochastic compactness arguments. 
In contrast with the deterministic case, where related results were obtained only under rather restrictive constitutive assumptions
for the pressure, the stochastic case is tractable in the full range of constitutive relations allowed by the available existence theory. This can be seen 
as a kind of regularizing effect of the noise on the global-in-time solutions.
\end{abstract}

\keywords{Navier-Stokes system, compressible fluid, stochastic perturbation, stationary solution}

\date{\today}

\maketitle

%\tableofcontents

\section{Introduction}
\label{i}

Stationary solutions of an evolutionary system provide an important piece of information concerning the behavior in the long run.
For systems with background in classical fluid mechanics, stationary solutions typically minimize the entropy production and play the role
of an attractor, at least for energetically insulated fluid flows, see e.g. \cite{FeiPr}.

{The principal question arising in the context of randomly driven systems} is the existence of a (stochastic) steady state solution for the system. Earlier results in this direction concern the incompressible case: Flandoli \cite{Fl1} proved existence of an invariant measure by the ``remote start'' method in 2D case.  This result has been extended in a few papers, for instance in Goldys and Maslowski \cite{GoMa1}, \cite{GoMa2} where existence of invariant measure by  the method of embedded Markov chain theory verifying also the exponential speed of convergence to invariant measure.  A different approach has been adopted by Hairer and Mattingly \cite{HM06} in which case a slightly weaker convergence result (implying however the uniqueness of invariant measure) has been shown under much weaker conditions on the nondegeneracy of the noise.
In the  paper \cite{BMO}  the existence of invariant measure is proved for 2D Navier-Stokes equation on unbounded domain  by a compactness method in the (weak) bw-topology.

In 3D much less is known in the case of incompressible fluid.  The problems here appear already on the level of Markov property induced by the equation. Transition Markov semigroup has been constructed in the papers by Da Prato and Debussche \cite{DaPDe1}, \cite{DaPDe2}, provided the noise term is sufficiently rough in space. However, even the problem of uniqueness of transition Markov semigroup remains open. A different approach was adopted by Flandoli and Romito \cite{FlaRom} who used the classical Stroock-Varadhan type argument to find a suitable Markov selection and construct the semigroup. The transition semigroup is shown to be  be exponentially ergodic (under appropriate conditions on the noise term) by the same arguments as in  \cite{GoMa1}. However, the uniqueness of the Markov transition semigroup has not been proved so far.

In absence of the Markov property (i.e. in the situation when the concept of invariant measure as a steady state is not well defined) it is possible to work directly with stationary solutions, i.e. with solutions which are strictly stationary stochastic processes. In the pioneering paper Flandoli and Gatarek \cite{FlaGat} existence of such  stationary solution has been shown in the 3D incompressible case by means of finite-dimensional approximations.

{To our best knowledge, no relevant results on large time behavior have been achieved so far for compressible stochastic fluid flows, 
where the situation is much more complex. 
To fill at least partially this gap,} we examine the class of stationary
solutions for a stochastically driven Navier-Stokes system:

\Cbox{Cgrey}{

\begin{eqnarray}
\label{i1}
   \D \vr + \Div (\vr \vu) \dt &=& 0, \\
\label{i2}
\D (\vr \vu) + \Div (\vr \vu \otimes \vu) \dt + \Grad p(\vr) \dt &=& \Div \mathbb{S}(\Grad \vu) \dt + \mathbb{G}(\vr, \vr \vu) \ \D {W}, \\
\label{i3}
\mathbb{S} (\Grad \vu) &=& \mu \left( \Grad \vu + \Grad^t \vu - \frac{2}{3} \Div \vu \mathbb{I} \right) + \eta \Div \vu \mathbb{I},
\end{eqnarray}

}

\noindent
where $\vr = \vr(t,x)$ is the mass density and $\vu = \vu(t,x)$ the macroscopic velocity of a compressible viscous fluids contained in a physical domain
$\mathcal O \subset \R^3$. Here the symbol $p=p(\vr)$ denotes the pressure, typically given by the isentropic state equation
\begin{equation} \label{i3a}
p(\vr) = a \vr^\gamma, \ a > 0,
\end{equation}
and $\mathbb{S}$ is the viscous stress tensor determined by Newton's rheological law (\ref{i3}), with  viscosity coefficients $\mu > 0$, $\eta \geq 0$.
The stochastic driving force is represented by the stochastic differential of the form
\begin{equation*} \label{i3b}
\mathbb{G}(\varrho,\varrho\bu)\, \D W = \sum_{k = 1}^\infty \vc{G}_k(x,\varrho,\varrho\bu)\,  \D W_k,
\end{equation*}
where $W=(W_k)_{k\in\mn}$ is a cylindrical Wiener process specified in Section \ref{s} below.

As pointed out above,
in contrast with the frequently studied \emph{incompressible} Navier-Stokes system, the problems related to the dynamics of
\emph{compressible} fluid flows driven by stochastic forcing are basically open.
First existence results were based on a suitable transformation formula that allows to reduce the problem to a random system of PDEs: The stochastic integral does no longer appear and deterministic methods are applicable, see \cite{MR1760377} for the 1D case, \cite{MR1807944} for a rather special periodic 2D. Finally, the work by Feireisl, Maslowski, Novotn\'y \cite{MR2997374} deals with the 3D case.
The first ``truly'' stochastic existence result for the compressible Navier-Stokes system perturbed by a general nonlinear multiplicative noise was obtained by Breit, Hofmanov\'a \cite{BrHo}. The existence of the so-called finite energy weak martingale solutions in three space dimensions with periodic boundary conditions was established. Extension of this result to the zero Dirichlet boundary conditions then appeared in \cite{2015arXiv150400951S,MR3385137}. For completeness, let us also mention \cite{BrFeHo2015,BrFeHo2015A,BrFeHo2016}  where further results appeared, namely, a singular limit, the so-called relative energy inequality and the local existence of strong solutions, respectively.

{Our goal is to establish the existence of global--in--time solutions to system (\ref{i1})--(\ref{i3}) that are stationary in the stochastic sense. 
To this end, we use a \emph{direct} method based on the four layer approximation scheme developed in
\cite{BrHo} inspired by \cite{feireisl1}. More specifically, the stationary solutions are constructed at the very basic approximation level. The final result is obtained by means of a combination of deterministic and stochastic compactness methods.} 
To be more precise,  the equations are regularized {by adding artificial viscosity and an artificial pressure term to the momentum equation (\ref{i2}).}
Thus one is  led to study the following approximate system
 \begin{align}\label{eq:approx}
 \begin{aligned}
  \dif \varrho+\Div(\varrho\bu)\dif t&=\varepsilon\Delta\varrho\,\dif t,\\
  \dif(\varrho\bu)+\big[\Div(\varrho\bu\otimes\bu)
+a\nabla \varrho^\gamma+\delta\nabla\varrho^\Gamma \big]\dif t&= \ep \Delta (\vr \vu) + \Div\mathbb{ S}(\nabla_x \vu)\,\dif t+\mathbb{G}(\varrho,\varrho\bu) \,\dif W,
\end{aligned}
 \end{align}
where $\Gamma>\max\{\frac{9}{2},\gamma\}$. {For technical reasons, explained in detail in \cite{feireisl1}, 
the two limits $\ep \to 0$, $\delta\to 0$ must be distinguished and performed in this order.}   

{To find stationary solutions for \eqref{eq:approx} with $\varepsilon>0$ and $\delta>0$ fixed, two additional approximation layers are needed.} {Namely, a suitable  Faedo-Galerkin approximation of \eqref{eq:approx} of dimension $N\in\mn$, with certain truncations of various nonlinear terms (corresponding to a parameter $R \in\mn$).}
Letting $R\rightarrow\infty$ gives a unique solution to the Faedo-Galerkin approximation.
The passage to the limit as $N\rightarrow\infty$ yields existence of a solution to \eqref{eq:approx}.
Except for the first passage to the limit, we always employ the stochastic compactness method. However, due to the delicate structure of \eqref{i1}--\eqref{i3} it is necessary to work with weak topologies and therefore we are lead to Jakubowski's generalization of the classical Skorokhod representation theorem  \cite[Theorem 2]{jakubow}. It applies to a large class of topological spaces, the so-called quasi-Polish spaces, including (but not limited to) separable Banach spaces equipped with weak topologies.

Another important ingredient of the proof is then the identification of the limit in the nonlinear terms. To be more precise, two main difficulties arise. First, the passage to the limit in the terms that depend nonlinearly on $\vr$ (i.e. the pressure term and the stochastic integral) cannot be performed directly since strong convergence of the approximate densities does not follow from the compactness argument. This issue appears already in the deterministic setting and is overcome by a technique based on regularity of the effective viscous flux introduced by Lions \cite{LI4}. A suitable stochastic version of this method was developed in \cite{BrHo} to treat \eqref{i1}--\eqref{i3}. {Note, however, that the stationary problem is rather different from the initial--value problem, where
compactness of the initial density field can be incorporated by a suitable choice of the initial data. Here, in analogy with the deterministic 
approach developed in \cite{EF56}, compactness of the denisty must be recovered from stationarity of the flow.}

The second difficulty one has to face arises in the passage of the limit in the stochastic integral. Indeed,  one has to deal with a sequence of stochastic integrals driven by a sequence of Wiener processes. One possibility is to pass to the limit directly and such technical convergence results appeared in several works (see \cite{bensoussan} or \cite{krylov}), a detailed proof can be found in \cite{debussche1}. Another way is to show that the limit process is a martingale, identify its quadratic variation and apply an integral representation theorem for martingales, if available (see \cite{daprato}). The existence theory for \eqref{i1}--\eqref{i3} developed in \cite{BrHo} relies on neither of those and follows a rather new general and elementary method that was introduced in \cite{on1} and already generalized to different settings.

The main goal of the present paper is {to show the existence of stationary solutions to \eqref{i1}--\eqref{i3}
in the framework of weak martingale solutions introduced in \cite{BrHo}. Although the multi-level approximation procedure is 
identical with that used in \cite{BrHo}, the uniform estimates necessary for the existence theory are in general not suitable} to study the long-time behavior of the system. They are based on the application of Gronwall's lemma and therefore grow exponentially with the final time $T$. Hence, the major challenge is to derive new estimates which are uniform with respect to all the approximation parameters as well as in $T$. This is the heart of the paper. Let us point out that the standard methods used for the incompressible system, as for instance in \cite{FlaGat}, \cite{FlaRom}, are not applicable in the compressible case. Indeed system \eqref{i1}--\eqref{i3} is of mixed hyperbolic-parabolic type and the dissipation term does not contain  the density. Consequently, the  forcing terms on the right-hand side of the energy balance cannot be absorbed in the dissipative term appearing on the left-hand side {in an obvious straightforward manner.}

Furthermore, it does not seem to be possible to find {universal estimates that would be uniform} in all the parameters $R,N,\varepsilon,\delta$ as well as in $T$.
Instead, during each approximation step we develop new estimates which are then used for the particular passage to the limit at hand. More precisely, at the {starting} level, that is for fixed parameters $R,N\in\mn,$ $\varepsilon,\delta>0$, we show existence, uniqueness and continuous dependence on the initial condition. Thus, the resulting system is Markovian and the transition semigroup is Feller. Consequently, the existence of invariant measures can be shown with the help of the standard Krylov--Bogoliubov method in the infinite-dimensional setting. This generates a family of approximate stationary solutions. Note that we loose uniqueness already after the first passage to the limit (in $R$). Hence the usual Krylov--Bogoliubov approach cannot be employed anymore, and even the concept of invariant measure becomes ambiguous. To overcome this problem we construct stationary solutions  on the next level as limits of the corresponding approximate stationary solutions from the previous level.

At each approximation step, there are essentially three necessary estimates: for the energy, the velocity and the pressure. At the deepest level, we are able to obtain the first two estimates uniformly in $R,N$ but the third one depends on all the parameters $R,N,\varepsilon,\delta$ and is therefore not suitable for any limit procedure. The key observation is that these estimates may be significantly improved if we take stationarity into account. Therefore, working directly with stationary solutions given by the Krylov--Bogoliubov method, we derive an estimate for the energy as well as the velocity which is uniform in all the approximation parameters. The estimate for the pressure is more delicate and has to be reproved at each level by applying a suitable test function to \eqref{i1}--\eqref{i3}.
The proof is then concluded by performing the limit for vanishing approximation parameters based on a combination of deterministic and probabilistic tools, similarly to \cite{BrHo}.

{
It is remarkable that our result holds for the same range of the adiabatic exponent $\gamma > \frac{3}{2}$ as in the nowadays available existence theory. Note that the relevant deterministic problem, namely the existence of bounded absorbing sets and attractors require a rather inconvenient technical restriction
$\gamma > \frac{5}{3}$, see \cite{EF56}, \cite{FP14}. Indeed, consider the iconic example of the driving force $\vr \vc{f}(x) \D W$ in (\ref{i2}). 
If we replace it by the deterministic forcing $\vr \vc{f}(x) \D t$, then, to the best of our knowledge, it is not known if the global-in-time weak solutions 
remain uniformly bounded for $t \to \infty$ for $\gamma$ in the physically relevant range $1 \leq \gamma \leq 5/3$. On the other hand, the 
stochastic forcing $\vr \vc{f}(x) \D W$ gives rise to stationary solutions for any $\gamma > 3/2$ as shown in Theorem \ref{Tm1}. The reason is
the cancelations of certain terms in the energy balance due to stochastic averaging. We therefore observe a kind of regularizing effect 
due to the presence of noise.
Note, however, that the growth conditions imposed on the diffusion coefficients $\mathbb{G}(\vr, \vr \vu)$ appearing in the driving term
are more restrictive than in \cite{BrHo}. }

The rest of the paper is devoted to the proof of {existence of a stationary solution to the compressible Navier--Stokes system as stated in} Theorem \ref{Tm1}
{below}. The precise setting is given in Section \ref{m}. In Section \ref{b}, we introduce the basic finite-dimensional approximation and construct
a family of approximate solutions adapting the standard Krylov--Bogoliubov method. In Section \ref{d}, we develop global-in-time estimates for stationary solutions and pass to the limit $R\to\infty$ and $N\to\infty$. Section \ref{L} is devoted to the vanishing viscosity limit, i.e. $\varepsilon\to0$. Finally, in Section \ref{P}, we perform the limit for vanishing artificial pressure, i.e. $\delta\to0$, obtaining the desired stationary solution, the existence of which is claimed in Theorem \ref{Tm1}.

\section{Mathematical framework}
\label{m}

\subsection{Boundary conditions}

Although the boundary conditions in the real world applications may be quite complicated and of substantial influence on the fluid motion, our goal is to focus on the
effect of stochastic perturbations imposed through stochastic volume forces. Accordingly, we
consider the periodic boundary conditions, where the physical domain may be identified with the flat torus
\[
\mathbb{T}^3 \equiv \left( [-1,1]|_{\{ -1,1 \}} \right)^3.
\]
On the other hand, however, our method leans essentially on the dissipative effect of viscosity represented by $\mathbb{S}$ in (\ref{i2}).
In particular, it is convenient to keep a kind of Korn--Poincar\' e inequality in force. Following the idea of Ebin \cite{Eb}, we consider the 
{physically relevant} \emph{complete slip} conditions

\color{black}

\Cbox{Cgrey}{

\begin{equation} \label{i4}
\vu \cdot \vc{n}|_{\partial \mathcal O} = 0, \ [\mathbb{S}(\Grad \vu) \cdot \vc{n}] \times \vc{n}|_{\partial \mathcal O} = 0
\end{equation}
imposed on the boundary of the cube
\[
\mathcal{O} = [0,1 ]^3.
\]
\color{black}

}

The crucial observation is that
the constraint (\ref{i4}) is automatically satisfied by \emph{periodic} functions $\vr$, $\vu$ defined on torus $\mathbb{T}^3$ and belonging to the symmetry class
\begin{equation} \label{i6}
\begin{split}
\vr(t, -x) &= \vr(t,x) \qquad \ x \in \mathbb{T}^3,\\
u^i(t, \cdot, - x_i, \cdot) &= - u^{i}(t, \cdot, x_i, \cdot)\qquad i = 1, 2, 3 , \\
u^i(t, \cdot, - x_j, \cdot) &= u^i(t, \cdot, x_j, \cdot) \qquad   i \ne j, \ i,j=1, 2, 3,
\end{split}
\end{equation}
cf. \cite{Eb}. In such a way, we may eliminate the problems connected with the presence of physical boundary by considering
periodic functions defined on $\mathbb{T}^3$ and belonging, in addition,  to the symmetry class (\ref{i6}). Note that
for $\vu$ in the class (\ref{i6}), we have  Korn--Poincar\' e inequality
\begin{equation} \label{i7}
\intO{ \mathbb{S} (\Grad \vu) : \Grad \vu } \geq c_{KP} \| \vu \|^2_{W^{1,2}(\mathbb{T}^3; \R^3)}.
\end{equation}

\color{black}

In addition, we prescribe the total mass

\Cbox{Cgrey}{
\begin{equation} \label{i5}
\intO{ \vr(t,x) } = M_0,\qquad t\in[0,\infty),
\end{equation}
where  $M_0 > 0$ is {a deterministic constant.} 
}
The assumption that $M_0$ is deterministic is taken for simplicity, in order to avoid unnecessary technicalities. A more general case of random $M_0$ satisfying
\begin{equation}\label{i5aa}
\underline m\leq M_0 \leq \overline m \qquad\p\text{-a.s.}
\end{equation}
for some deterministic constants $\underline m,\overline m\in (0,\infty)$ can  also be considered. In that case, one would prescribe the law of $M_0$ such that \eqref{i5aa} holds.

\subsection{Stochastic setting}
\label{s}

{We consider a cylindrical $( \mathfrak{F}_t )_{t \geq 0}$-Wiener process defined on a stochastic basis
\[
\bas,
\]
with a probability space $(\Omega,\mathfrak{F},\prst)$, and a right-continuous complete filtration
$( \mathfrak{F}_t )_{t \geq 0}$. 
Formally, it is given by $W(t)=\sum_{k\geq1}e_k W_k(t) $ with $(W_k)_{k\in\mn}$ being mutually independent real-valued standard Wiener processes relative to $(\mf_t)_{t\geq0}$. Here $(e_k)_{k\in\mn}$ denotes a complete orthonormal system in a sepa\-rable Hilbert space $\mathfrak{U}$ (e.g. $\mathfrak{U}=L^2(\mt)$ would be a natural choice).}
The stochastic integral in \eqref{i2} is understood in the following sense
\[
\int \mathbb{G}(\varrho,\varrho\vu) \,\D  W =\sum_{k = 1}^\infty \int \mathbb{G}_k(x, \vr, \vr\vu)e_k  \,\D  W_k=: \sum_{k = 1}^\infty \int \vc{G}_k(x, \vr, \vr\vu)  \,\D  W_k,
\]
where the one-dimensional summands on the right-hand side are standard It\^o-type stochastic integrals. In agreement with (\ref{i6}), we suppose that the functions $\vc{G}_k = \vc{G}_k(x, \rho, \vc{q})$ satisfy
\begin{equation} \label{i8}
\begin{split}
G^i_k (\cdot, -x_i, \cdot, - q^i, \cdot ) &= - G^i_k (\cdot, x_i, \cdot, q^i ,\cdot), \qquad i=1,2, 3,\\
G^i_k (\cdot, -x_j, \cdot, - q^j , \cdot) &= G^i_k (\cdot, x_j, \cdot, q^j, \cdot), \qquad i \ne j, \ i,j=1, 2, 3.
\end{split}
\end{equation}

\begin{Remark} \label{RC1}

The meaning of (\ref{i8}) is to keep {the spatially periodic solutions} in the symmetry class (\ref{i6}) as long as the initial data belong to
(\ref{i6}) $\p$-a.s.

\end{Remark}

Finally, we define the auxiliary space $\mathfrak{U}_0\supset\mathfrak{U}$ via
$$\mathfrak{U}_0=\bigg\{v=\sum_{k\geq1}\alpha_k e_k;\;\sum_{k\geq1}\frac{\alpha_k^2}{k^2}<\infty\bigg\},$$
endowed with the norm
$$\|v\|^2_{\mathfrak{U}_0}=\sum_{k\geq1}\frac{\alpha_k^2}{k^2},\qquad v=\sum_{k\geq1}\alpha_k e_k.$$
Note that the embedding $\mathfrak{U}\hookrightarrow\mathfrak{U}_0$ is Hilbert-Schmidt. Moreover, trajectories of $W$ are $\prst$-a.s. in $C([0,T];\mathfrak{U}_0)$ (see \cite{daprato}). {For simplicity of the presentation, we often identify $\mathbb{G}(\vr,\vr\vu)$ as a Hilbert-Schmidt operator on $\mathfrak{U}$ with the sequence $\{\mathbf{G}_k(\vr,\vr\vu)\}_{k\in\mn}$ as an element of $\ell^2$.}

\color{black}

\subsection{Main result}

We use the concept of weak martingale solution introduced in  \cite{BrHo}.
In accordance with the available {\it a priori} bounds provided by the energy estimates, a suitable state space for
$[\vr, \vr \vu]$ is taken
\[
\vr \in L^\gamma(\mathbb{T}^3),\ \vr \vu \in L^{\frac{2 \gamma}{\gamma + 1}}(\mathbb{T}^3; \R^3),
\]
where $\gamma$ is the adiabatic exponent in the state equation (\ref{i3a}). Accordingly, we consider  initial laws
$\Lambda$ defined on  the Borel $\sigma$-algebra of the product space $L^\gamma (\mathbb{T}^3) \times L^{\frac{2 \gamma}{\gamma + 1}}(\mathbb{T}^3; \R^3)$.

\begin{Definition} \label{Dm1}
A quantity
\[
\left[ \bas ; \vr, \vu, W \right]
\]
is called a \emph{weak martingale solution} to problem (\ref{i1})--(\ref{i3}) {in $[0,T]$} with the initial law $\Lambda$ provided:
\begin{itemize}
\item
$\bas$ is a stochastic basis with a complete right-continuous filtration;
\item $W$ is an $( \mathfrak{F}_t )_{t \geq 0}$-cylindrical Wiener process;
\item the density $\vr$ satisfies $\vr \geq 0$, $t \mapsto \left< \vr(t), \psi \right> \in C([0,T])$ for any
$\psi \in C^\infty(\mathbb{T}^3)$
$\mathbb{P}$-a.s., the function $t \mapsto \left< \vr(t), \psi \right>$
{is $(\mathfrak{F}_t)$-adapted},
and
\begin{equation} \label{mom1}
\expe{ \sup_{t \in [0,T]} \| \vr(t) \|^n_{L^\gamma(\mathbb{T}^3)} } < \infty \ \mbox{{for a certain}}\ n > 1;
\end{equation}
\item {the velocity field $\vu \in L^2(\Omega \times (0,T); W^{1,2}(\mathbb{T}^3;\R^3))$ satisfies}
\begin{equation*} \label{mom2}
\expe{ \left( \int_0^T \| \vu(t) \|^2_{W^{1,2}(\mathbb{T}^3; \R^3)} \ \dt \right)^n } < \infty\ \mbox{\textcolor{black}{for a certain}}\ n > 1;
\end{equation*}
\item the momentum $\vr \vu$ satisfies $t \mapsto \left< \vr \vu(t), \phi \right> \in C([0,T])$ for any $\phi \in C^\infty(\mathbb{T}^3;\R^3)$
$\mathbb{P}$-a.s., the function $t \mapsto \left< \vr \vu(t), \phi \right>$ \textcolor{black}{is $(\mathfrak{F}_t)$-adapted},
\begin{equation} \label{mom3}
\expe{ \sup_{t \in [0,T]} \left\| \vr \vu(t) \right\|^n_{L^{\frac{2 \gamma}{\gamma + 1}}{(\mathbb{T}^3;\R^3)} }} < \infty\ \mbox{\textcolor{black}{for a certain}}\  n > 1;
\end{equation}
\item $\Lambda=\mathbb{P}\circ \left( \vr(0), \vr \vu (0) \right)^{-1} $,
\item $\mathbb{G}(\vr,\vr\vu)=\{\mathbf{G}_k(\vr,\vr\vu)\}_{k\in\mn}\in L^2(\Omega\times (0,T),\mathcal{P},\dif\p\otimes\dt;\ell^2(W^{-b,2}(\mt;\R^3)))$ for some $b>\frac{3}{2}$, where $\mathcal{P}$ denotes the progressively measurable $\s$-field associated to $(\mathfrak{F}_t)_{t\geq 0}$;
\item for all test functions $\psi \in C^\infty(\mathbb{T}^3)$, $\phi \in C^\infty(\mathbb{T}^3; \R^3)$ and all $t \in [0,T]$ it holds $\mathbb{P}$-a.s.
\begin{eqnarray}
\nonumber
\D \intO{ \vr \psi } &=& \intO{ \vr \vu \cdot \Grad \psi } \dt,
\\
\nonumber \D \intO{ \vr \vu \cdot \phi }  &=& \intO{ \Big[  \vr \vu \otimes \vu : \Grad \phi - \tn{S}(\Grad \vu) : \Grad \phi
+  p(\vr) \Div \phi \Big] } \dt \\ \nonumber
&&\quad+ \sum_{k=1}^\infty\intO{ \mathbf{G}_k (\vr, \vr \vu) \cdot \phi } \, \D W_k.
\end{eqnarray}

\end{itemize}

\end{Definition}

\begin{Remark} \label{mR1}

In addition to Definition \ref{Dm1}, we say that $[\vr, \vu]$ satisfy the complete slip boundary conditions (\ref{i4}), \textcolor{black}{if $[\vr(t, \cdot), \vr \vu(t, \cdot)]$} belong
to the symmetry class (\ref{i6}) for any $t \in [0,T]$ $\prst$-a.s.

\end{Remark}

\begin{Remark} \label{mR1bis}

\textcolor{black}{
Note that the statement about progressive measurability of the diffusion coefficients $\mathbb{G}(\vr, \vr \vu)$ is introduced for completeness, and,
as a matter of fact, can be deduced from the (weak) progressive measurability of $\vr$ and $\vr \vu$, see \cite{BrHo}.} 

\end{Remark}

\begin{Remark} \label{mR1bis+}

\textcolor{black}{
In contrast to the existence theory developed in \cite{BrHo}, the moments in (\ref{mom1})--(\ref{mom3}) are bounded up to a certain positive order $n$
rather then for \emph{all} $n > 1$ as in \cite{BrHo}. This is because the integrability of the moments for the initial--value problem is controlled by the initial  data.
}

\end{Remark}

\begin{Remark} \label{mR2}

Similarly to
\cite{BrFeHo2015A}, \textcolor{black}{we consider the class of \emph{dissipative} martingale solutions satisfying}, in addition to the stipulations specified in Definition \ref{Dm1},
an energy inequality. \textcolor{black}{Indeed some form of energy balance will be used
at every step of the construction of the stationary solution.} As a result, the stationary solution we obtain is also a dissipative martingale solution
in the sense of  \cite{BrFeHo2015A}. In addition, as in \cite{BrHo}, the equation of continuity \eqref{i1} is satisfied in the renormalized sense
\begin{eqnarray}
\D\intO{b(\vr)\psi} &=& \intO{ b(\vr)\vu \cdot\Grad\psi }\dt - \intO{ \big(b'(\vr)\vr-b(\vr)  \big)\Div\vu \,\psi }\dt
\end{eqnarray}
for every $\psi\in C^\infty(\mt)$, and every $b\in C^1([0,\infty))$ with $b'(z)=0$ for $z\geq M_b$ for some constant $M_b>0$. This is an essential tool to pass to the limit in the nonlinear pressure.

\end{Remark}

Due to the \textcolor{black}{specific} structure of the Navier-Stokes system \eqref{i1}--\eqref{i3}, 
a concept of stationarity must chosen accordingly. 
\textcolor{black}{We recall the standard definition of stationarity for \emph{continuous} processes ranging
in the Sobolev space $W^{k,p}$.}

\begin{Definition}\label{D2}
Let $k\in\mn_0$, $p\in[1,\infty)$ and let $\bfU=\{\bfU(t);t\in[0,\infty)\}$ be an $W^{k,p}(\mt)$-valued measurable stochastic process. We say that $\bfU$ is \emph{stationary} on $W^{k,p}(\mt)$ provided the joint laws
$$\mathcal{L}(\bfU(t_1+\tau),\dots, \bfU(t_n+\tau)),\quad \mathcal{L}(\bfU(t_1),\dots, \bfU(t_n))$$
on $[W^{k,p}(\mt)]^n$ coincide for all $\tau\geq0$, for all $t_1,\dots,t_n\in [0,\infty)$.

\end{Definition}

However, we observe that according to Definition \ref{Dm1}, the velocity $\vu$ is not a stochastic process in the classical sense. Indeed, its trajectories belong to $L^2(0,T;W^{1,2}(\mt;\R^3))$, i.e. are only defined almost everywhere in time. Therefore, even though the above definition of stationarity can be used for $[\vr,\vr\vu]$, it is not suitable to describe stationarity of $\vu$. To overcome this flaw, we  consider solutions as random variables ranging in the
space $L^q_{\rm loc}([0, \infty);W^{k,p}(\mt))$ as follows.

\begin{Definition}\label{D1}
Let $k\in\mn_0$, $p,q\in[1,\infty)$ and let $\bfU$ be an $L^q_{\rm loc}([0,\infty);W^{k,p}(\mt))$-valued random variable. Let $\mathcal S_\tau$ be the time shift on the space of trajectories given by $\mathcal{S}_\tau \bfU(t)=\bfU(t+\tau).$
We say that $\bfU$ is \emph{stationary} on $L^q_{\rm loc}([0,\infty);W^{k,p}(\mt))$ provided the laws
$\mathcal{L}(\mathcal{S}_\tau\bfU),$ $ \mathcal{L}(\bfU)$
on $L^q_{\rm loc}([0,\infty);W^{k,p}(\mt))$ coincide for all $\tau\geq0$.

\end{Definition} 

{Note that as Lemma \ref{lem:s} shows, it is actually sufficient to consider Definition \ref{D1} for $q=1$.}
As a matter of fact, the two concepts of stationarity {introduced 
in Definition \ref{D2} and Definition \ref{D1}} are equivalent as soon as the stochastic process in question is
continuous in time; or alternatively, if it is weakly continuous and satisfies a suitable uniform bound. Proofs of these statements are provided in Lemma \ref{l:equivD12} and Corollary \ref{l:equivD123} below. Furthermore, it can be shown that both notions of stationarity are stable under weak convergence, see Lemma \ref{lem:stac} and Lemma \ref{lem:stac2}.

Motivated by Definition \ref{D1}, we adapt the concept 
of stationarity introduced in the context of incompressible viscous fluids by Romito \cite{Romi}, cf. also the approach proposed by  
It\^{o} and Nisio \cite{ItNi}.

\begin{Definition}\label{Dm2}
A weak martingale solution $[\vr,\vu, W]$ to \eqref{i1}--\eqref{i3} is called {\em stationary} provided 
the joint law of the time shift 
$
\left[\mathcal{S}_\tau\vr, \mathcal{S}_\tau\vu, \mathcal{S}_\tau W - W \right]$ on
$$ L^1_{\rm loc}([0, \infty); L^\gamma(\mtor)) \times L^1_{\rm loc}([0, \infty); W^{1,2}(\mtor; \R^3)) 
\times C([0, \infty); \mathfrak{U}_0 )
$$
is independent of $\tau \geq 0$.
\end{Definition}

\begin{Remark}
{In accordance with the previous discussion, if $[\vr, \vu, W]$ is a stationary martingale solution of the Navier--Stokes system 
(\ref{i1})--(\ref{i3}) in the sense of Definition \ref{Dm2}, then the process $[\vr,\vr\bu]$ is stationary on $L^\gamma(\mt)\times L^\frac{2\gamma}{\gamma+1}(\mt;\R^3)$ in the sense of Definition \ref{D2};} whereas for $\vu$ we only have stationarity on $L^2_{\text{loc}}([0,\infty);W^{1,2}(\mt;\R^3))$ in the sense of Definition \ref{D1}.
\end{Remark}

The following theorem is the main result of the present paper. For notational simplicity, we restrict ourselves to the most difficult
and {physically relevant} case of three space dimensions.
However, our result extends to the two- and mono-dimensional case as well, even under the weaker assumption {$\gamma>1$ and 
$\gamma \geq 1$, respectively.}

\Cbox{Cgrey}{

\begin{Theorem} \label{Tm1}

Let $M_0 \in(0,\infty)$ be  given.
Let $p = p(\vr)$ be given by \eqref{i3a} with $\gamma > \frac{3}{2}$. Suppose that the
diffusion coefficients $\vc{G}_k$ belong to the symmetry class \eqref{i8} and there exist functions $\mathbf{g}_k\in C(\mathbb{T}^3 \times [0, \infty) \times \R^3; \R^3)$ and $\alpha_k\geq0$, $k\in\mn$, such that
\begin{equation} \label{m1}
\begin{split}
\vc{G}_k(x, \rho, \vc{q}) &= \rho \vc{g}_k (x, \rho, \vc{q}),\\
 \left|\nabla_{\rho, \vc{q}} \vc{g}_k (x, \rho, \vc{q}) \right| +
|\vc{g}_k (x, \rho, \vc{q})| &\leq \alpha_k, \qquad \sum_{k =1}^\infty \alpha^2_k  = G < \infty.
\end{split}
\end{equation}
Then problem \eqref{i1}--\eqref{i3}, \eqref{i4}, \eqref{i5} admits a {stationary martingale solution $[\vr, \vu, W]$.}

\end{Theorem}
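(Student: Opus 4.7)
The plan is to follow the four-layer approximation scheme of \cite{BrHo} but to construct stationary objects at every level. At the deepest level, with parameters $R,N\in\mn$ and $\varepsilon,\delta>0$ fixed, the truncated Faedo--Galerkin system \eqref{eq:approx} is a locally Lipschitz SDE on a finite-dimensional subspace (for the velocity) coupled with a parabolic regularized continuity equation (for the density), so classical SDE theory gives a unique global solution depending continuously on the initial datum. Consequently, the solution defines a Feller Markov semigroup on a state space of $[\vr,\vr\bu]$. To produce an invariant measure, I would apply the Krylov--Bogoliubov method: exploit tightness of the time-averaged laws through an energy estimate in expectation that uses the dissipation $\|\nabla\bu\|_{L^2}^2$ together with the cancellation coming from the It\^o correction of the noise (the key term $\tfrac12\sum_k\|\mathbf{G}_k(\vr,\vr\bu)\|^2$ is, under the growth assumption \eqref{m1}, of order $\vr$ only, hence can be absorbed). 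This produces an invariant measure, which lifted to the path space yields a stationary solution to the approximation.

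The crucial second step is to derive uniform-in-parameters and uniform-in-$T$ a priori bounds for these stationary approximations, which is where ordinary Gronwall-based estimates from the Cauchy theory fail. The point is that for a stationary process $\E[\cdot]$ is independent of $t$, so that taking expectations in the It\^o energy balance and differentiating in $t$ all martingale and drift-in-time boundary terms drop out. This leaves an identity of the form
\begin{equation*}
\E\!\left[\intO{\mathbb{S}(\nabla\bu):\nabla\bu}\right]=\tfrac12\E\!\left[\sum_{k\geq 1}\intO{|\mathbf{G}_k(\vr,\vr\bu)|^2/\vr}\right]+\text{(dissipative residuals)},
\end{equation*}
which by \eqref{m1} and the Korn--Poincar\'e inequality \eqref{i7} delivers an $\E\|\bu\|_{W^{1,2}}^2$ bound in terms of $G$ and $M_0$ alone; the mass constraint \eqref{i5} being preserved along the flow pins down the density in $L^1$. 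The pressure bound, necessary to pass to the limit in $\vr^\gamma$, I would recover at every layer individually by testing the momentum equation with the Bogovskii-type corrector $\mathcal{B}[\vr^\gamma-M_0]$, still exploiting stationarity to kill time-boundary contributions and the It\^o correction to control the quadratic variation of the resulting stochastic integral.

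Having uniform estimates, I would perform successively the limits $R\to\infty$, $N\to\infty$, $\varepsilon\to 0$, $\delta\to 0$ via stochastic compactness. Tightness of the joint laws of $(\vr,\bu,W)$ on the quasi-Polish space $L^1_{\rm loc}([0,\infty);L^\gamma)\times(L^2_{\rm loc}([0,\infty);W^{1,2}),\textrm{weak})\times C_{\rm loc}([0,\infty);\mathfrak{U}_0)$ follows from the uniform estimates and classical time-regularity of the momentum equation, and Jakubowski's version of Skorokhod's theorem produces new, a.s.~convergent copies on a common probability space. Stationarity is preserved in the limit because the time-shift $\mathcal{S}_\tau$ commutes with Jakubowski's construction (cf.\ Lemma \ref{lem:stac}/\ref{lem:stac2}), so Definition \ref{Dm2} passes to the limit directly. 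The identification of nonlinearities in the momentum equation proceeds as in \cite{BrHo}: the effective viscous flux identity of Lions is reproduced in the stochastic setting to upgrade weak to strong convergence of $\vr$, and convergence of the stochastic integrals uses the elementary method of \cite{on1}, which is compatible with the weak topology we work in.

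The main obstacle is unquestionably the identification of the pressure limit, i.e.\ proving strong convergence of the approximate densities at each of the limits $\varepsilon\to 0$ and $\delta\to 0$. In the Cauchy theory one deduces $L^1$-compactness from propagation of the renormalized continuity equation starting from compact initial data; here no such initial slice is available, and the compactness must be extracted purely from stationarity. Following the strategy of \cite{EF56}, I would combine the renormalized equation (Remark \ref{mR2}) with the effective viscous flux identity and the oscillation defect measure of Feireisl, then exploit the $\tau$-invariance of the law to show that the defect has zero expectation in steady state, which in turn forces a.s.~strong convergence of $\vr$. This stationary version of the oscillations-defect argument, adapted to the stochastic framework, is the technical heart of the paper and the place where the assumption $\gamma>3/2$, together with the precise form \eqref{m1} of the noise, enters decisively.
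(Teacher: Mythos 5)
Your high-level plan follows the paper's approach closely: Krylov--Bogoliubov at the deepest approximation level, stationarity used to kill time-derivative terms in the energy balance, stochastic compactness via Jakubowski--Skorokhod on quasi-Polish path spaces, and a stochastic effective-viscous-flux argument adapted to the stationary setting. The observation that strong convergence of the density must be extracted from stationarity rather than from compact initial data (as in \cite{EF56}) is exactly the right insight. However, there are several gaps where your proposal deviates from what actually works.

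First, your claim of ``uniform-in-parameters'' a priori bounds is too optimistic. The paper explicitly notes that no universal estimate uniform in $R,N,\varepsilon,\delta,T$ is available. The velocity bound $\expe{\|\vu\|^2_{W^{1,2}}}\leq c(G,M_0)$ is indeed uniform once one has stationarity, but the energy bound and the pressure bound are re-derived at each level with different constants: Proposition \ref{prop:90} still blows up as $\varepsilon\to0$, Proposition \ref{prop:1708} produces a bound $c(\delta,G,M_0)$, and Proposition \ref{prop:1808} is needed separately to get a $\delta$-independent bound. These are not cosmetic distinctions; they dictate the order of the limits and the structure of the argument.

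Second, testing the momentum equation with a Bogovskii-type corrector of $\vr^\gamma-M_0$ does not work. The paper uses $\Grad\Delta_x^{-1}[\vr-M_\varepsilon]$ at the $\varepsilon$-level (producing $\vr^{\gamma+1}+\delta\vr^{\Gamma+1}$) and, crucially, $\Grad\Delta_x^{-1}[\vr^\alpha-\intO{\vr^\alpha}]$ with a \emph{small} exponent $0<\alpha<1/3$ (chosen so that $\alpha q\leq1$, $q$ determined by $\gamma>3/2$) at the $\delta$-level; the resulting convective and commutator error terms are then estimable only because $\alpha$ is small, and the constant is uniform in $\delta$. Using $\vr^\gamma$ in the corrector would generate terms like $\vr^{2\gamma}$ that cannot be absorbed with the available $\expe{(1+\int[\tfrac12\vr|\vu|^2+\cdots])\|\vu\|^2_{W^{1,2}}}$ bound, and would moreover clash with the renormalized continuity equation whose nonlinearity $b(z)=z^\gamma$ is not in the admissible class.

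Third, and equally important, the paper's basic approximate system \eqref{b1} is not the same as \eqref{eq:approx}: two additional damping terms $-2\varepsilon\vr$ and $H\big(\tfrac{1}{M_0}\intO{\vr}\big)$ are inserted into the continuity equation (with a matching modification of the momentum equation). These terms are indispensable for the Krylov--Bogoliubov step: they give the asymptotic lower bound $\vr\geq\underline\vr(N,R,\varepsilon)>0$ and the uniform-in-time $W^{k,p}$-bound on $\vr$ (Lemma \ref{lem:1608}), which in turn legitimize the compact state space $\mathcal R$, the Feller property, and the tightness of time-averaged laws. Your ``classical SDE theory gives a unique global solution'' would apply to the untruncated system only if one had such a priori positivity and regularity for the density component -- which does not follow from the $\varepsilon$-viscous continuity equation alone. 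As a small corollary, the total mass $\intO{\vr}$ is \emph{not} conserved along the modified flow; stationarity forces $\intO{\vr}=M_\varepsilon$ (a deterministic constant tending to $M_0$ only as $\varepsilon\to0$), so the ``mass constraint pins down $L^1$'' statement needs this correction.

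Finally, where you invoke ``the oscillation defect measure of Feireisl'' the paper in fact runs a more direct argument: the effective-viscous-flux identity plus the renormalized continuity equation for $b(z)=z\log z$ (admissible thanks to the weakened growth assumption on $b'$) yield, after taking expectation and using stationarity to cancel the time-boundary terms $\big[\intO{\vr\log\vr}\big]_T^{T+1}$, the inequality \eqref{L5} and hence strong $L^{\Gamma+1}$-convergence of the densities. The defect-measure machinery is not needed once one has the artificial pressure $\delta\vr^\Gamma$, and in the final $\delta\to0$ limit the same scheme with the small exponent $\alpha$ works for every $\gamma>\tfrac32$.
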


}

Note that if for instance $\mathbf{G}_k(x,\rho,0)=0$ for all $x\in\mt$, $\rho\in [0,\infty)$ and $k\in\mn$, then \eqref{i1}--\eqref{i3} admits a trivial stationary solution, namely, $\vu\equiv 0$ and $\vr\equiv \text{const}.$ Nevertheless, Theorem \ref{Tm1} applies to more general diffusion coefficients $\mathbf{G}_k$ where such trivial solutions do not exist.

\begin{Remark}

Let us briefly discuss the noise term in the equation (\ref{i2}). Technically, $W$ is a cylindrical Wiener process. However, note that our approach covers also the standard case of distributed (space-dependent) noise under very natural conditions.
More specifically, consider the equation (\ref{i2}) written formally as

$$ \frac{ (\vr \vu)}{\dt} (t,x) + \Div (\vr \vu \otimes \vu) (t,x) + \Grad p(\vr) (t,x) = \Div \mathbb{S}(\Grad \vu) (t,x) + \sigma (x, \vr (t,x), \vr \vu (t,x)) \frac{\D W}{\dt}(t,x),$$

for $(t,x)\in (0,\infty) \times \mathbb{T}^3$, where the noise intensity $\sigma$ takes the form $\sigma (x,\rho, \vc{q})= \rho \sigma _1(x,\rho, \vc{q})$ and $\sigma _1 \in C^1(\mathbb{T}^3\times [0,\infty) \times \R^3; \R^3)$. Furthermore, $ \frac{\D W}{\dt}$ stands for a white in time, space-dependent noise, which is considered to be a formal derivative of an  $L^2 (\mathbb{T}^3; \R^3)$-valued Wiener process. Denote by $\Sigma$ the (trace class) incremental covariance of $W$; obviously there exists an orthonormal basis $(f_k)_{k\in\mn}$ in  $L^2 (\mathbb{T}^3; \R^3)$ and a sequence $(\lambda _k)_{k\in\mn},\ \lambda_k\ge 0,$ such that

$$ \Sigma f_k = \lambda _k f_k,\qquad W(t,x) = \sum_{k=1}^\infty \sqrt{\lambda _k}f_k(x)W_k(t),\qquad \sum_{k=1}^\infty \lambda _k <\infty,$$
where $(W_k)_{k\in\mn}$ is a sequence of independent, standard scalar Wiener processes.  In such case Definition~\ref{Dm1} yields a natural concept of rigorous solution to the system (\ref{i1})--(\ref{i3}) if we set
$$\vc{g}_k (x, \rho, \vc{q}) = \sigma _1(x, \rho, \vc{q}) f_k(x) \sqrt{\lambda _k},\qquad k\in \mathbb{N}.$$

If $\sigma_1$ is bounded, globally Lipschitz in $\rho, \ \vc{q}$, {uniformly in} $x$, $f_k \in C(\mt;\R^3)$ and $f_k$, are bounded on $\mt$, uniformly in $k\in \mathbb{N}$, then the noise term satisfies the condition (\ref{m1}), thereby  Theorem \ref{Tm1} is applicable to the present case.

\end{Remark}

{The rest of the paper is devoted to the proof of Theorem \ref{Tm1}.}

\section{Basic finite-dimensional approximation}
\label{b}

In this section, we introduce the zero-level approximate system to \eqref{i1}-\eqref{i3} and study its long-time behavior  for suitable initial data
{belonging to the symmetry class (\ref{i6}). More precisely, based on an energy estimate, Proposition \ref{prop:en}, and bounds for the density, Lemma \ref{lem:1608}, we apply the Krylov--Bogoliubov method to deduce the existence of an invariant measure.

{We point out that in accordance with hypothesis (\ref{i8}), the solutions
can be constructed to be
spatially periodic solutions, i.e. they belong to the symmetry class (\ref{i6}), as long as 
the initial data belong to the same class (\ref{i6}).} We always tacitly assume this fact  without specifying it explicitly in the future.
}

{Let}
\begin{align*}
\begin{aligned}
 H_N&=\bigg\{\bfw=[\bfw_1,\bfw_2,\bfw_3]:\ \bfw_i=\sum_{|{\bf m}|\leq N} a_{\bf m}[\bfw_i] \exp\left( {\rm i} {{\bf m}} \cdot x \right) ,\ 
|\vc{m}| \leq N  \bigg\}
\end{aligned}
\end{align*}
{be the space of trigonometric polynomials of order $N$, endowed with the Hilbert structure of the Lebesgue space $L^2(\mathbb{T}^3; \R^3)$, and let $\|\cdot\|_{H_N}$ denote the corresponding norm.}
Let
\[
\Pi_N : L^2(\mathbb{T}^3; \R^3) \to H_N
\]
be the associated $L^2$-orthogonal projection. Note that the following holds
\begin{align}\label{eq:project}
\|\Pi_N v\|_{L^p(\mathbb{T}^3;\R^3)}\leq\,c_p\|v\|_{L^p(\mathbb{T}^3;\R^3)}\qquad\forall v\in L^p(\mathbb{T}^3;\R^3),
\end{align}
and
$$\Pi_N v\to v\qquad\text{in}\qquad L^p(\mt;\R^3),$$
for any $p\in(1,\infty)$, cf. \cite[Chapter 3]{Gra}.

\subsection{Approximate field equations}

Fix $R\in\mn$, $N\in\mn$, $\ep > 0$, $\delta > 0$ and let {$\Gamma>\max\{\tfrac{9}{2},\gamma\}$}. The approximate solutions $\vr = \vr_N$, $\vu = \vu_N$, $\vu_N(t) \in H_N$ for any $t$, are constructed to satisfy the following system of equations
\begin{equation} \label{b1}
\begin{split}
\D \vr &+ \Div (\vr [\vu]_R ) \dt = \ep \Delta \vr \dt - 2 \ep \vr \dt + H \left( \frac{1}{M_0} \intO{ \vr } \right) \dt,\\
\D \intO{ \vr \vu \cdot \varphi} &- \intO{ \vr [\vu]_R \otimes \vu : \Grad \varphi } \dt
- \intO{ a \vr^\gamma H ( \| \vu \|_{H_N} - R ) \Div \varphi } \dt  \\
&=  - \intO{ \mathbb{S}(\Grad \vu) : \Grad \varphi} \dt  + \sum_{k=1}^\infty\intO{ \varrho\,\Pi_N\bfg_k(\varrho,\varrho\bfu) \cdot \varphi } \ \D W_k \\
&\hspace{-.5cm}+ \ep \intO{ \vr \vu \cdot \Delta \varphi } \dt - 2 \ep \intO{ \vr \vu \cdot \varphi } \ \dt + \delta \intO{ \vr^\Gamma
H(\| u \|_{H_N} - R )\Div \varphi } \dt,
\end{split}
\end{equation}
for any test function $\varphi \in H_N$, where
\[
[u]_R = H \left( \|\vu \|_{{H_N}} - R \right) \vu
\]
with
\[
H \in C^\infty(\R), \quad H = \left\{ \begin{array}{l} 1 \ \mbox{on}\ (-\infty, 0],\\ \mbox{a decreasing function on}\
( 0 , 1), \\ 0 \ \mbox{on}\ [1, \infty). \end{array} \right.
\]

Note that the basic approximate system \eqref{b1} is not the same as the one from \cite{BrHo}, cf. \eqref{eq:approx}. To be more precise, in order to obtain global-in-time estimates we are forced to include two more ``stabilizing'' terms in the continuity equation and to modify the momentum equation accordingly. Nevertheless, similarly to  \cite[Section 3]{BrHo}, it can be shown that problem
(\ref{b1}) admits a unique strong pathwise solution for any initial data $[\varrho_0,(\varrho\vu)_0]$ satifying, for some $\nu>0$,
\begin{equation} \label{b1a}
\begin{split}
&\vr_0 \in C^{2 + \nu}(\mathbb T^3), \ 0 < \underline{\vr} < \vr_0 < \Ov{\vr}, \
(\vr \vu)_0 \in {C^2(\mathbb T^3; \R^3)}\ \prst\mbox{-a.s.},\\
&\expe{ \left( \intO{ \left[ \frac{| (\vr \vu)_0 |^2}{\vr_0} + \frac{a}{\gamma- 1} \vr_0^\gamma + \frac{\delta}{\Gamma - 1} \vr_0^\Gamma \right] } \right)^n } \leq c(n)\ \ \text{for all } 1\leq n<\infty.
\end{split}
\end{equation}
where $\underline{\vr}$, $\Ov{\vr}$ are deterministic constants, and
where the associated initial value of $\vu$ is uniquely determined by
\begin{equation*}%\label{b1b}
\vu_0 \in H_N, \ \intO{ \vr_0 \vu_0 \cdot \varphi } = \intO{ (\vr \vu)_0 \cdot \varphi } \ \  \mbox{for all} \ \varphi \in H_N.
\end{equation*}

\subsection{Basic energy estimates}

The energy estimates established in \cite[Section 3]{BrHo} are not well-suited for the construction of stationary solutions. Indeed, the application of Gronwall's Lemma leads to an exponentially (in time) growing right hand side.
In this subsection we derive improved energy estimates which overcome this problem and hold true uniformly in $t$. However, it is important to note that at this stage of the proof, we are not able to obtain estimates independent of all the approximation parameters, namely, the following bounds blow up as $\varepsilon\to0$. The necessary uniform estimates for the passage to the limit in $\varepsilon$ will be derived directly for stationary solutions in Section \ref{d}.

\begin{Proposition}\label{prop:en}
Let $(\varrho,\bfu)$ be a solution to \eqref{b1} starting from
\begin{equation} \label{b13}
\vr_0 =1 ,\quad (\varrho\vu)_0=\vu_0=0.
\end{equation}
Then the following bounds hold true.
\begin{equation} \label{b14b}
\expe{ \left( \intO{ \left[ \frac{1}{2} \vr |\vu|^2 + \frac{a}{\gamma - 1} \vr^\gamma + \frac{\delta}{\Gamma - 1} \vr^{\Gamma} \right] (\tau, \cdot) }
\right)^n } \leq
c\left( n,\varepsilon, G \right),\ n\in\mn,
\end{equation}
\begin{equation} \label{b14}
\frac{1}{T} \expe{ \int_0^T \left( \| \vu \|^2_{W^{1,2}(\mathbb T^3; \R^3)} + \frac{2a\ep}{\gamma} |\Grad \vr^{\gamma/2} |^2_{L^2(\mathbb T^3; \R^3)} +
\frac{2\delta \ep}{\Gamma} |\Grad \vr^{\Gamma/2} |^2_{L^2(\mathbb T^3; \R^3)} \right) \dt } \leq  c\left( \ep, G \right).
\end{equation}
\end{Proposition}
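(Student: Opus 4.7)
The plan is to apply It\^o's calculus to the total energy
\[
E(\vr,\vu) := \intO{\Big[\tfrac12\vr|\vu|^2 + \tfrac{a}{\gamma-1}\vr^\gamma + \tfrac{\delta}{\Gamma-1}\vr^\Gamma\Big]},
\]
combining the Galerkin momentum equation in \eqref{b1} tested against $\vu$ with the deterministic renormalisations of the continuity equation against $P'(\vr):=\tfrac{a\gamma}{\gamma-1}\vr^{\gamma-1}+\tfrac{\delta\Gamma}{\Gamma-1}\vr^{\Gamma-1}$ and against $-\tfrac12|\vu|^2$. The essential algebraic cancellation is that the pressure contributions in the momentum equation cancel exactly the terms produced by $\Div(\vr[\vu]_R)$ tested against $P'(\vr)$, thanks to the matching cut-off $H(\|\vu\|_{H_N}-R)$ present in both places in \eqref{b1}. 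The regularisation $\ep\Delta\vr$ yields, through $P''(\vr)|\nabla\vr|^2 = \tfrac{4a}{\gamma}|\nabla\vr^{\gamma/2}|^2 + \tfrac{4\delta}{\Gamma}|\nabla\vr^{\Gamma/2}|^2$, the positive gradient terms appearing on the left of \eqref{b14}, while the viscous contribution is controlled by $c_{KP}\|\vu\|_{W^{1,2}}^2$ through Korn--Poincar\'e \eqref{i7}. Crucially, the \emph{stabilising} terms $-2\ep\vr$ in the continuity equation and $-2\ep\vr\vu$ in the momentum equation---freshly introduced in \eqref{b1} compared with \cite{BrHo}---together contribute a linear decay $-2\ep\,E\,dt$ of the full energy (the kinetic and two potential pieces acquire decay rates $2\ep$, $2\ep\gamma$, $2\ep\Gamma$, respectively). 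The result is an It\^o balance of the form
\[
dE + c_1\|\vu\|_{W^{1,2}}^2\,dt + c_2\ep\bigl(\|\nabla\vr^{\gamma/2}\|_{L^2}^2+\|\nabla\vr^{\Gamma/2}\|_{L^2}^2\bigr)\,dt + 2\ep\,E\,dt \le \mathcal J\,dt + \mathcal H\,dt + dM_t,
\]
with $\mathcal J$ the It\^o correction from the stochastic integral, $\mathcal H$ the drift from the mass feedback $H(\tfrac{1}{M_0}\intO{\vr})$, and $M_t$ a local martingale.

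The two error drifts are controlled as follows. The feedback drift $\mathcal H$ is bounded by $\|H\|_\infty\intO{\bigl(|P'(\vr)|+\tfrac12|\vu|^2\bigr)}$, which after Young's inequality is absorbable into $c_1\|\vu\|_{W^{1,2}}^2+\ep E$ plus an additive constant. The It\^o correction $\mathcal J$ is controlled by combining the structural assumption \eqref{m1} (noise linear in $\vr$, with $|\vc g_k|\le\alpha_k$ and $\sum_k\alpha_k^2=G$), the $L^p$-boundedness \eqref{eq:project} of $\Pi_N$, and the pathwise positive pointwise lower bound on $\vr$ provided by the parabolic regularisation $\ep\Delta\vr$ at this basic approximation level, giving $\mathcal J\le C(\ep,G)$. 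The total mass is uniformly bounded in time pathwise, since testing the continuity equation against $1$ produces the scalar ODE $\tfrac{d}{dt}\intO{\vr}=-2\ep\intO{\vr}+H(\tfrac{1}{M_0}\intO{\vr})$ whose solutions remain bounded. Taking expectation kills $dM_t$ and yields the scalar inequality $\tfrac{d}{dt}\mathbb E E+2\ep\,\mathbb E E\le C(\ep,G)$, from which Gronwall's lemma gives $\mathbb E E(\tau)\le C(\ep,G)$ uniformly in $\tau\ge0$---the $n=1$ case of \eqref{b14b}.

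For the $n$-th moment in \eqref{b14b}, apply It\^o to $(1+E)^n$: the quadratic-variation term $\tfrac12 n(n-1)(1+E)^{n-2}d[E]_t$ is controlled, via \eqref{m1}, \eqref{eq:project} and Cauchy--Schwarz, by $C(n,\ep,G)(1+E)^{n-1}\,dt$, which Young's inequality absorbs into $\ep(1+E)^n$. This produces $\tfrac{d}{dt}\mathbb E(1+E)^n+c_{\ep,n}\,\mathbb E(1+E)^n\le C(n,\ep,G)$, closing \eqref{b14b} inductively in $n$. For the time-averaged bound \eqref{b14}, it suffices to integrate the balance of the first paragraph from $0$ to $T$, take expectation (killing $dM_t$), and divide by $T$; since $E(0)$ is a deterministic constant by \eqref{b13} and the right-hand side grows at most linearly in $T$, the dissipative integrals on the left are bounded by $c(\ep,G)$ uniformly in $T$. \textbf{The main obstacle} is producing the genuine linear decay $\ep c\,E$ on the left-hand side: without it, the classical Gronwall argument would blow up as $T\to\infty$, yielding only the exponentially growing estimates of \cite{BrHo}. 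The modification of \eqref{b1} by the stabilising terms $-2\ep\vr$ and $-2\ep\vr\vu$ is engineered precisely to generate this linear decay. A secondary technical point is to keep all constants uniform in the Galerkin dimension $N$, which relies on \eqref{eq:project} combined with the linear-in-$\vr$ structure of the noise in \eqref{m1}, ensuring that the stochastic terms scale with $\intO{\vr}$ rather than with higher density moments.
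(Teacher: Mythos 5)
Your overall architecture matches the paper's: apply It\^o's formula to the energy, exploit the artificial $-2\ep\vr$ and $-2\ep\vr\vu$ terms to produce a linear decay term $2\ep\intO{[\tfrac12\vr|\vu|^2+\tfrac{a\gamma}{\gamma-1}\vr^\gamma+\tfrac{\delta\Gamma}{\Gamma-1}\vr^\Gamma]}$ on the left (with per-component rates $2\ep$, $2\ep\gamma$, $2\ep\Gamma$, exactly as you note), absorb the drift forcings via weighted Young, derive the bound for $n=1$ by a Gronwall-type ODE for $\E[E]$, deduce \eqref{b14} by time-integration and Korn--Poincar\'e, and pass to $n>1$ by applying It\^o to $E^n$.

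However, there is a genuine error in how you control the It\^o correction $\mathcal J = \tfrac12\sum_k\intO{\vr^{-1}|\varrho\Pi_N\vc g_k|^2}$. You attribute the bound $\mathcal J\le C(\ep,G)$ to a ``pathwise positive pointwise lower bound on $\vr$ provided by the parabolic regularisation $\ep\Delta\vr$''. No such lower bound is used here, and indeed none is available uniformly in the relevant parameters: the only lower bound on $\vr$ established in the paper (Lemma~\ref{lem:1608}(b)) holds only for $\tau\ge1$ and carries a constant $\underline{\vr}(\overline m,N,R,\ep)$ that degenerates with both $N$ and $R$, whereas \eqref{b14b}--\eqref{b14} are claimed (and needed) uniformly in $R$ and $N$. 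The actual mechanism is purely algebraic: $\vr^{-1}|\varrho\Pi_N\vc g_k|^2=\vr|\Pi_N\vc g_k|^2$, so the $1/\vr$ cancels entirely; then H\"older, the $L^p$-boundedness \eqref{eq:project} of $\Pi_N$ for $p<\infty$, and $|\vc g_k|\le\alpha_k$ give the paper's estimate \eqref{b4}, namely $\mathcal J\le c(G)\|\vr\|_{L^\gamma(\mt)}$ --- a bound that still involves $\vr$. The $\ep$-dependence of the final constant enters only in the subsequent step, when $c(G)\|\vr\|_{L^\gamma}$ and the $H$-feedback forcing are absorbed by weighted Young's inequality into the $2\ep$-weighted pressure term on the left-hand side. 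As written, your claim $\mathcal J\le C(\ep,G)$ is unsupported and the justification you give would, if pursued, reintroduce $R,N$-dependence that the statement does not allow. A secondary, minor inaccuracy: the term $\ep\intO{\tfrac12 H(\cdot)|\vu|^2}$ appears in the paper's balance \eqref{b2} as a nonnegative dissipation on the \emph{left}, not as part of a forcing $\mathcal H$ on the right; this does not break your absorption argument (since $|H|\le1$), but it misrepresents the structure.
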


\begin{proof}
Applying It\^{o}'s chain rule to \eqref{b1} we deduce the basic energy balance
\begin{equation} \label{b2}
\begin{split}
&\D \intO{ \left[ \frac{1}{2} \vr |\vu|^2 + \frac{a}{\gamma - 1} \vr^\gamma + \frac{\delta}{\Gamma - 1} \vr^{\Gamma} \right] }
+ 2 \ep \intO{ \left[ \frac{1}{2} \vr |\vu|^2 + \frac{a\gamma }{\gamma - 1} \vr^\gamma + \frac{\delta \Gamma }{\Gamma - 1} \vr^{\Gamma} \right] } \dt \\
&\quad+ \intO{ \mathbb{S}(\Grad \vu) : \Grad \vu } \dt
+ \ep \intO{ \vr |\Grad \vu|^2 } \dt + \ep \intO{  \left( a \gamma \vr^{\gamma - 2} + \delta \vr^{\Gamma - 2} \right) |\Grad \vr |^2 } \dt
\\
&\quad+
 \ep \intO{ \frac{1}{2} H\left( \frac{1}{M_0} \intO{ \vr } \right) |\vu|^2 }\dt
\\
& =  \sum_{k=1}^\infty\intO{ \varrho\,\Pi_N\bfg_k(\varrho,\varrho\vu) \cdot \vu } \, \D W_k + \frac{1}{2} \sum_{k=1}^\infty\intO{ \frac{1}{\vr} |\varrho \Pi_N\vc{g}_k(\varrho,\varrho\vu)|^2 } \dt
\\
&\quad+ H\left( \frac{1}{M_0} \intO{ \vr } \right) \intO{ \left( \frac{a \gamma}{\gamma - 1} \vr^{\gamma - 1} + \frac{\delta \Gamma}{\Gamma - 1} \vr^{\Gamma - 1}
 \right) }  \dt,
\end{split}
\end{equation}
we refer the reader to \cite[Proposition 3.1]{BrHo} for details. In view of hypothesis (\ref{m1}) and the continuity of $\Pi_N$ \eqref{eq:project}, we have
\begin{align}\label{b4}
\begin{aligned}
\sum_{k=1}^\infty\intO{ \frac{1}{\vr} |\varrho\Pi_N\vc{g}_k(\varrho,\varrho\vu)|^2 }&\leq c \|\varrho\|_{L^\gamma(\mt)}\sum_{k=1}^\infty\|\mathbf{g}_k(\varrho,\varrho\vu)\|^2_{L^{2\gamma'}(\mt;\R^3)}\\
& \leq c \|\varrho\|_{L^\gamma(\mt)}\sum_{k=1}^\infty\|\mathbf{g}_k(\varrho,\varrho\vu)\|^2_{L^\infty(\mt;\R^3)} \leq c(G)\|\varrho\|_{L^\gamma(\mt)},
\end{aligned}
\end{align}
where  $\tfrac{1}{\gamma}+\tfrac{1}{\gamma'}=1$.
Remark that the function $\hat{\vr} = \intO{ \vr }$ satisfies the (deterministic) ODE
\begin{equation} \label{b3}
\frac{\D}{\dt} \hat{\vr} = - 2 \ep \hat{\vr} + H \left( \frac{\hat{\vr}}{M_0}  \right).
\end{equation}
In particular, the function $\hat{\vr}$ is bounded by a constant depending solely on the initial mass $M_0$.
Taking expectation in (\ref{b2}) leads to
\begin{equation*}% \label{b6}
\begin{split}
&\frac{\D}{\dt} \expe{ \intO{ \left[ \frac{1}{2} \vr |\vu|^2 + \frac{a}{\gamma - 1} \vr^\gamma + \frac{\delta}{\Gamma - 1} \vr^{\Gamma} \right] } }
+ 2 \ep \expe{ \intO{ \left[ \frac{1}{2} \vr |\vu|^2 + \frac{a \gamma}{\gamma-1}  \vr^\gamma  + \frac{\delta \Gamma}{\Gamma-1}  \vr^\Gamma  \right] } } \\
&\quad+
\expe{ \intO{ \mathbb{S}(\Grad \vu) : \Grad \vu }
 }  + \ep \expe{ \intO{  \left( a \gamma \vr^{\gamma - 2} + \delta \Gamma \vr^{\Gamma - 2} \right) |\Grad \vr |^2 } }
\\
& \leq c\left( G \right)\E\|\varrho\|_{L^\gamma(\mt)} + \expe{ H\left( \frac{1}{M_0} \intO{ \vr } \right) \intO{ \left( \frac{a \gamma}{\gamma - 1} \vr^{\gamma - 1} + \frac{\delta \Gamma}{\Gamma - 1} \vr^{\Gamma - 1}
 \right) } }.
\end{split}
\end{equation*}
Now, we observe that both  terms on the right hand side can be estimated by the weighted Young inequality and then absorbed in the second term on the left hand side. This readily implies \eqref{b14b} for $n=1$ with an $\varepsilon$-dependent constant on the right hand side that blows up as $\varepsilon\to0$.
In addition, keeping (\ref{b13}) in mind and applying the Korn--Poincar\' e inequality (\ref{i7}), we deduce the estimate for the ergodic averages \eqref{b14}.

As the next step, we apply the It\^o formula to (\ref{b2}) to obtain, for $n\in\mn$,
\begin{align} \label{b4a}
\begin{aligned}
&\D \left( \intO{ \left[ \frac{1}{2} \vr |\vu|^2 + \frac{a}{\gamma - 1} \vr^\gamma + \frac{\delta}{\Gamma - 1} \vr^{\Gamma} \right] } \right)^n
+ 2 \ep n \left( \intO{ \left[ \frac{1}{2} \vr |\vu|^2 + \frac{a\gamma }{\gamma - 1} \vr^\gamma + \frac{\delta \Gamma }{\Gamma - 1} \vr^{\Gamma} \right] } \right)^n \dt\\
&\quad+ n \left( \intO{ \left[ \frac{1}{2} \vr |\vu|^2 + \frac{a}{\gamma - 1} \vr^\gamma + \frac{\delta}{\Gamma - 1} \vr^{\Gamma} \right] } \right)^{n-1}
 \intO{ \mathbb{S}(\Grad \vu) : \Grad \vu } \dt \\
&\quad+ \ep n \left( \intO{ \left[ \frac{1}{2} \vr |\vu|^2 + \frac{a}{\gamma - 1} \vr^\gamma + \frac{\delta}{\Gamma - 1} \vr^{\Gamma} \right] } \right)^{n-1}\intO{ \vr |\Grad \vu|^2 } \dt \\
&\quad+ \ep n \left( \intO{ \left[ \frac{1}{2} \vr |\vu|^2 + \frac{a}{\gamma - 1} \vr^\gamma + \frac{\delta}{\Gamma - 1} \vr^{\Gamma} \right] } \right)^{n-1}\intO{  \left( a \gamma \vr^{\gamma - 2} +\delta \vr^{\Gamma - 2} \right) |\Grad \vr |^2 } \dt
\\
&\quad+
 \ep n \left( \intO{ \left[ \frac{1}{2} \vr |\vu|^2 + \frac{a}{\gamma - 1} \vr^\gamma + \frac{\delta}{\Gamma - 1} \vr^{\Gamma} \right] } \right)^{n-1}\intO{ \frac{1}{2} H\left( \frac{1}{M_0} \intO{ \vr } \right) |\vu|^2 }\dt\\
& = n \sum_{k=1}^\infty\left( \intO{ \left[ \frac{1}{2} \vr |\vu|^2 + \frac{a}{\gamma - 1} \vr^\gamma + \frac{\delta}{\Gamma - 1} \vr^{\Gamma} \right] } \right)^{n-1} \intO{ \varrho\Pi_N\bfg_k(\varrho,\vr\vu) \cdot \vu } \, \D W_k \\
&\quad+ \frac{n}{2}
\left( \intO{ \left[ \frac{1}{2} \vr |\vu|^2 + \frac{a}{\gamma - 1} \vr^\gamma + \frac{\delta}{\Gamma - 1} \vr^{\Gamma} \right] } \right)^{n-1}
\sum_{k=1}^\infty\intO{ \frac{1}{\vr} |\varrho\Pi_N\vc{g}_k(\varrho,\vr\vu)|^2 } \dt
\\
&\quad+n \left( \intO{ \left[ \frac{1}{2} \vr |\vu|^2 + \frac{a}{\gamma - 1} \vr^\gamma + \frac{\delta}{\Gamma - 1} \vr^{\Gamma} \right] } \right)^{n-1}
\times \\
&\quad\quad\times H\left( \frac{1}{M_0} \intO{ \vr } \right) \intO{ \left( \frac{a \gamma}{\gamma - 1} \vr^{\gamma - 1} + \frac{\delta \Gamma}{\Gamma - 1} \vr^{\Gamma - 1}
 \right) } \ \dt \\
&\quad+ \frac{n (n-1)}{2} \left( \intO{ \left[ \frac{1}{2} \vr |\vu|^2 + \frac{a}{\gamma - 1} \vr^\gamma + \frac{\delta}{\Gamma - 1} \vr^{\Gamma} \right] } \right)^{n-2}\sum_{k=1}^\infty\left( \intO{ \varrho\Pi_N\vc{g}_k(\vr,\vr\vu) \cdot \vu } \right)^2 \dt\\
&=:\mathcal K.
\end{aligned}
\end{align}
By virtue of (\ref{m1}) and the continuity of $\Pi_N$ \eqref{eq:project},
\begin{equation} \label{b4b}
\begin{split}
\sum_{k=1}^\infty\left( \intO{ \varrho\Pi_N\vc{g}_k(\vr,\vr\vu) \cdot \vu } \right)^2 &\leq \sum_{k=1}^\infty \left\| \sqrt{\varrho}\,\Pi_N\vc{g}_k(\vr,\vr\vu) \right\|^2_{L^2(\mathbb T^3;\R^3)} \| \sqrt{\vr} \vu \|^2_{L^2(\mathbb T^3; \R^3)} \\ &\leq
c\sum_{k=1}^\infty \|\varrho\|_{L^\gamma(\mathbb T^3)}\|\Pi_N\bfg_k(\vr,\vr\vu)\|^2_{L^{2\gamma'}(\mathbb T^3;\R^3)} \| \sqrt{\vr} \vu \|^2_{L^2(\mathbb T^3; \R^3)} \\
 &\leq
c\sum_{k=1}^\infty \|\varrho\|_{L^\gamma(\mt)}\|\bfg_k(\vr,\vr\vu)\|^2_{L^{2\gamma'}(\mathbb T^3;\R^3)} \| \sqrt{\vr} \vu \|^2_{L^2(\mathbb T^3; \R^3)} \\
 &\leq
c(G)\|\varrho\|_{L^\gamma(\mathbb T^3)} \| \sqrt{\vr} \vu \|^2_{L^2(\mathbb T^3; \R^3)} \\
&\leq
c(G)\|\vr\|_{L^\gamma(\mt)} \intO{ \left[ \frac{1}{2} \vr |\vu|^2 + \frac{a}{\gamma - 1} \vr^\gamma + \frac{\delta}{\Gamma - 1} \vr^{\Gamma} \right] }.
\end{split}
\end{equation}
Therefore, passing to expectations, the right hand side of \eqref{b4a} may be estimated by
\begin{align*}
\E\mathcal K&\leq
n \left( \intO{ \left[ \frac{1}{2} \vr |\vu|^2 + \frac{a}{\gamma - 1} \vr^\gamma + \frac{\delta}{\Gamma - 1} \vr^{\Gamma} \right] } \right)^{n-1}
\intO{ \left( \frac{a \gamma}{\gamma - 1} \vr^{\gamma - 1} + \frac{\delta \Gamma}{\Gamma - 1} \vr^{\Gamma - 1}
 \right) } \ \dt \\
&\quad+c\left( n,  G \right)\E\left( \intO{ \left[ \frac{1}{2} \vr |\vu|^2 + \frac{a}{\gamma - 1} \vr^\gamma + \frac{\delta}{\Gamma - 1} \vr^{\Gamma} \right] } \right)^{n-1}
\|\vr\|_{L^\gamma(\mt)}\dt .
\end{align*}
Now, after application of the weighted Young inequality, both these terms can be absorbed in the second term on the left hand side of \eqref{b4a}, yielding a constant that blows up as $\varepsilon\to0$.
Hence we may infer \eqref{b14b} for any solution of (\ref{b1}) starting from regular initial data (\ref{b1a}).
\end{proof}

\subsection{Regularity of the density}

Making use of the of the additional damping terms in the first equation in \eqref{b1}, we are able to show strong statements about the regularity of the solution depending on the parameters.

\begin{Lemma}\label{lem:1608}
Let $\bfu\in C([0,\infty);H_N)$.
Let $\varrho$ be a classical solution to
\begin{align}\label{eq:1508}
\partial_t\vr + \Div (\vr [\vu]_R ) = \ep \Delta \vr  - 2 \ep \vr  + H \left( \frac{1}{M_0} \intO{ \vr } \right)
\end{align}
with $\varrho(0)\in C^{2+\nu}(\mathbb T^3)$ such that $\varrho(0)>0$ and $\int_{\mathbb T^3} \varrho(0)\dx\leq \overline m$.\begin{itemize}
\item[(a)] Then we have
\begin{equation} \label{b23}
 \| \vr (\tau, \cdot) \|_{W^{k,p} (\mathbb T^3)}  \leq c(\overline m,k,p, N, R, \ep)\quad \forall \tau\geq1
\end{equation}
for all $k\in\mathbb N$ and $p<\infty$.
\item[(b)]
There exists a (deterministic) constant $\underline{\vr} = \underline{\vr}(\overline m, N, R, \ep) > 0$ such that
\begin{equation} \label{b21}
\vr(\tau,\cdot) \geq \underline{\vr} \quad \forall\tau\geq1.
\end{equation}
\end{itemize}
In particular, the constants are independent of $\bfu$.
\end{Lemma}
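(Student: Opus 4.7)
Since $\|[\bfu]_R\|_{H_N}\leq R$ and $H_N$ is finite-dimensional, the drift $v:=[\bfu]_R$ is a trigonometric polynomial with $\|v\|_{C^k(\mt;\R^3)}\leq c(N,R,k)$ for every $k\in\mn$, uniformly in $t$; moreover the source $H(\tfrac{1}{M_0}\intO{\vr})$ takes values in $[0,1]$. Hence \eqref{eq:1508} is a linear uniformly parabolic equation with smooth bounded coefficients. The plan is first to analyse the closed ODE satisfied by the mass $\hat{\vr}(t):=\intO{\vr(t,\cdot)}$ in order to obtain a uniform positive lower bound in $t$, then to turn this into the pointwise lower bound (b) via heat-kernel estimates, and finally to derive the $W^{k,p}$-estimate (a) by bootstrapping the mild form of \eqref{eq:1508}, using the damping $-2\ep\vr$ to secure uniformity in $\tau$.

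\textbf{Step 1: mass ODE.} Integrating \eqref{eq:1508} over $\mt$ eliminates the transport and diffusion terms and gives exactly \eqref{b3}. The right-hand side of \eqref{b3} is strictly decreasing in $\hat{\vr}$, equals $1$ at $\hat{\vr}=0$ and is negative for $\hat{\vr}\geq M_0$, so the ODE admits a unique equilibrium $y^{\ast}\in(0,M_0)$ to which every trajectory from $[0,\infty)$ converges monotonically. Comparing with the solution started from $0$ (when $\hat{\vr}(0)<y^{\ast}$) and using that $\hat{\vr}(t)\geq y^{\ast}$ for all $t$ when $\hat{\vr}(0)\geq y^{\ast}$, one extracts a constant $c_0=c_0(\overline m,M_0,\ep)>0$ such that
\[
\hat{\vr}(t)\geq c_0\qquad\forall\, t\geq 1/2.
\]

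\textbf{Step 2: part (b).} Rewriting \eqref{eq:1508} in non-divergence form gives
\[
\partial_t\vr-\ep\Delta\vr+v\cdot\Grad\vr+(\Div v+2\ep)\vr=H(\hat{\vr}/M_0)\geq 0,
\]
a uniformly parabolic equation on $\mt$ with smooth coefficients bounded by $C(N,R,\ep)$. Since $\vr(0)>0$, the strong parabolic minimum principle guarantees $\vr>0$ everywhere. Denoting by $G(\tau,x;s,y)$ the fundamental solution of the associated homogeneous operator, the standard (Aronson-type) Gaussian lower bound together with the boundedness of $\mt$ provides
\[
G(\tau,x;\tau-\tfrac12,y)\geq\kappa=\kappa(N,R,\ep)>0\qquad\forall\, x,y\in\mt.
\]
Writing the solution formula with $\tau-1/2$ as initial time and dropping the (nonnegative) source contribution yields, for every $\tau\geq 1$,
\[
\vr(\tau,x)\,\geq\,\int_{\mt}G(\tau,x;\tau-\tfrac12,y)\,\vr(\tau-\tfrac12,y)\,dy\,\geq\,\kappa\,\hat{\vr}(\tau-\tfrac12)\,\geq\,\kappa c_0=:\underline{\vr},
\]
which is \eqref{b21}.

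\textbf{Step 3: part (a) and main obstacle.} The same ODE also gives the matching upper bound $\hat{\vr}(t)\leq\max(\overline m,y^{\ast})$, i.e. a uniform-in-$t$ $L^1$-bound on $\vr$. The mild formulation of \eqref{eq:1508} reads
\[
\vr(\tau)=e^{-2\ep}e^{\ep\Delta}\vr(\tau-1)+\int_{\tau-1}^{\tau}e^{-2\ep(\tau-s)}e^{\ep(\tau-s)\Delta}\bigl[-\Div(\vr v)+H(\hat{\vr}/M_0)\bigr](s)\,ds,
\]
and combined with the heat-semigroup smoothing $\|\nabla^j e^{\ep t\Delta}\|_{L^p\to L^q}\leq c(\ep t)^{-j/2-\frac{3}{2}(1/p-1/q)}$, the exponential damping factor $e^{-2\ep(\tau-s)}$, and the $C^k$-bounds on $v$, it allows to bootstrap from the uniform $L^1$-bound to $L^\infty$ and then to arbitrary $W^{k,p}$, uniformly in $\tau\geq 1$, by iterating on consecutive unit intervals. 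This yields \eqref{b23}. The main obstacle throughout is precisely this uniformity in $\tau$: classical parabolic regularity is local in time, and it is only the extra damping $-2\ep\vr$ together with the bounded source term added in \eqref{b1}---absent from \eqref{eq:approx} of \cite{BrHo}---that prevents exponential-in-$t$ growth of the constants and makes the time-uniform conclusions \eqref{b21} and \eqref{b23} possible.
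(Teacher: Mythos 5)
Your argument is correct and, for part (a) and the preliminary analysis of the mass ODE \eqref{b3}, follows essentially the same scheme as the paper: you identify the stable equilibrium $M_\varepsilon$ of \eqref{b3}, obtain a time-uniform $L^1$ bound, and bootstrap on unit time intervals (you via the Duhamel formula and heat-semigroup $L^p$--$L^q$ smoothing, the paper via maximal $L^p$ regularity --- the same tool in a different guise). For part (b), however, you take a genuinely different route. The paper writes \eqref{eq:1508} in non-divergence form, bounds $|\Div[\vu]_R|\le D(N,R)$, and then compares $\vr(t,x)$ pointwise with the solution of the scalar ODE \eqref{b111}, whose limit is $H(M_\varepsilon/M_0)/(2\ep+D(N,R))>0$; this is very elementary but keeps track of $\underline{\vr}(0)\le\inf_{\mt}\vr(0)$, and to make the conclusion genuinely uniform in the initial datum one should (as you implicitly do via the Duhamel formula for the ODE) note that $\underline{\vr}(t)\ge\int_{0}^{t}e^{-a(t-s)}H(\hat{\vr}(s)/M_0)\,\dif s$ is controlled from below independently of $\underline{\vr}(0)$. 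You instead lower-bound the fundamental solution $G$ of the adjoint-free homogeneous operator on $\mt$ (a Nash/Aronson/Harnack estimate, uniform in $t$ because the coefficients $v=[\vu]_R$ and $\Div v+2\ep$ are bounded in $C^k$ uniformly in $\bfu$ thanks to the truncation and the finite dimensionality of $H_N$), drop the nonnegative source, and convert the mass lower bound $\hat{\vr}(\tau-\tfrac12)\ge c_0$ from Step~1 into a pointwise one via $\vr(\tau,x)\ge\kappa\hat{\vr}(\tau-\tfrac12)$. This is a heavier tool than the paper's comparison principle, but it makes the independence of $\underline{\vr}$ from the initial datum transparent and bypasses the case analysis needed when $\overline m$ is large compared with $M_0$ (where $H(\hat{\vr}(s)/M_0)$ may vanish for an initial time interval). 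One small correction to your closing remark: for part (a) the exponential damping factor $e^{-2\ep(\tau-s)}$ in the Duhamel formula is not what prevents growth of constants --- on a fixed window $[\tau-1,\tau]$ it is just an $O(1)$ factor; the uniformity comes solely from the time-uniform $L^1$ bound, and what the terms $-2\ep\vr$ and $H(\cdot)$ really buy is the stable equilibrium for $\hat{\vr}$ (needed for part (b) and, indirectly, for the $L^1$ bound given that $H$ would otherwise drive $\hat{\vr}$ up).
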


\begin{proof}
We start with equation (\ref{b3}) for the density averages that is independent of $\bfu$. Since (\ref{b3}) is a first order (deterministic) ODE an easy observation shows
\begin{align}\label{eq:1508b}
\hat{\varrho}(t)\rightarrow M_\varepsilon\quad \text{as}\quad t\rightarrow\infty,
\end{align}
where $M_\varepsilon>0$ is the unique solution to the equation $2\varepsilon M_\varepsilon=H\big(\frac{M_\varepsilon}{M_0}\big)$. The convergence above is uniform  in the sense that for every $\k>0$ there is $T=T(\overline{m},\varepsilon,\k)$ deterministic such that
$|\hat{\varrho}(t)-M_\varepsilon|<\k$ for all $t\geq T$.

The next step is to show that $\varrho$ is uniformly bounded from below as claimed in (b).
Returning to the equation of continuity, we have
\[
\partial_t \vr - \ep \Delta \vr + \Grad \vr [\vu ]_R = - (2 \ep + \Div [\vu]_R ) \vr + H\left( \frac{1}{M_0} \hat{\varrho} \right).
\]
Seeing that
\[
|\Div [\vu]_R | \leq D(R,N)
\]
for some constant $D(R,N)$,
we may use the comparison principle to deduce that
\[
\vr (t, \cdot) \geq \underline{\vr}(t),
\]
where $\underline{\vr}$ solves the equation
\begin{equation}\label{b111}
\frac{\D \underline{\vr}}{\dt} = - \underline{\vr} ( 2 \ep + D(R,N) ) + H\left( \frac{1}{M_0} \hat{\varrho}\right),
\quad 0 < \underline{\vr}(0)\leq \inf_{\mathbb T^3} \vr(0).
\end{equation}
In accordance with \eqref{eq:1508b} we have
\[
H\left( \frac{1}{M_0} \hat{\varrho}(t)\right) \to H\left( \frac{M_\varepsilon}{M_0} \right)=2\varepsilon M_\varepsilon>0 \quad \mbox{as}\quad t \to \infty.
\]
Since any solution to \eqref{b111} is asymptotically stabilized towards this equilibrium, we conclude that $\hat\varrho(t)>0$ for any $t>0$ and
\[
\underline{\vr} (t) \to \frac{ H\left( \frac{M_\varepsilon}{M_0} \right) }{2 \ep + D(R,N)} \quad \mbox{as}\quad t \to \infty
\]
and finally \eqref{b21} follows.

Now we are going to prove part (a). First, note that \eqref{eq:1508b}
implies
\begin{align}\label{eq:1508c}
\hat{\varrho}(t)=\|\varrho(t)\|_{L^1(\mathbb T^3)}\leq\,c(\overline{m}).
\end{align}
We apply maximal regularity theory (see e.g. \cite{HP}) to the equation \eqref{eq:1508} to obtain
\begin{align*}
\|&\partial_t\varrho\|_{L^2(T,T+1;W^{-2,q}(\mathbb T^3))}+\|\Delta\varrho\|_{L^2(T,T+1;W^{-2,q}(\mathbb T^3))}\\&\leq \,c\,\Big(\|\varrho(T)\|_{W^{-1,q}(\mathbb{T}^3)}+\|\Div(\varrho[\bfu]_R)\|_{L^2(T,T+1;W^{-2,q}(\mathbb T^3))}+\Big\|H \Big( \tfrac{1}{M_0} \intO{ \vr } \Big)\Big\|_{L^2(T,T+1;W^{-2,q}(\mathbb{T}^3))}\Big)
\end{align*}
where $q$ is chosen such $1<q<3/2$. Since $L^1(\mathbb T^3)\hookrightarrow W^{-1,q}$ and  we have \eqref{eq:1508c}
\begin{align*}
\|&\partial_t\varrho\|_{L^2(T,T+1;W^{-2,q}(\mathbb T^3))}+\|\varrho\|_{L^2(T,T+1;L^{q}(\mathbb{T}^3))}\\&\leq \,c\,\Big(\|\varrho(T)\|_{W^{-1,q}(\mathbb T^3)}+\|\varrho\|_{L^2(T,T+1;W^{-1,q}(\mathbb T^3))}+1\Big)\\
&\leq \,c\,\Big(\|\varrho(T)\|_{L^1(\mathbb T^3)}+\|\varrho\|_{L^2(T,T+1;L^1(\mathbb T^3))}+1\Big)\\
&\leq\,c\big(\|\varrho\|_{L^\infty(T,T+1;L^1(\mathbb T^3))}+1\big)\leq\,c,
\end{align*}
where $c$ depends on $R$ and $\varepsilon$ but is independent of $T$. Consequently, there is $\tau=\tau(T)\in[T,T+1]$ such that
$\varrho(\tau)$ is bounded in $L^q(\mathbb T^3)$ independently of $T$. A similar argument as above shows
\begin{align*}
\|&\partial_t\varrho\|_{L^2(\tau,\tau+1;W^{-1,q}(\mathbb T^3))}+\|\varrho\|_{L^2(T,T+1;W^{1,q}(\mathbb{T}^3))}\\&\leq \,c\,\Big(\|\varrho(\tau)\|_{L^{q}(\mathbb T^3)}+\|\varrho\|_{L^2(T,T+1;L^{q}(\mathbb T^3))}+1\Big)\leq \,c.
\end{align*}
So we have
\begin{align*}
\varrho\in L^2(T,T+1;W^{1,q}(\mathbb T^3))
\end{align*}
with a bound independent of $T$. Now, we can bootstrap the argument to obtain the claim.		
\end{proof}

\subsection{Approximate invariant measures}
\label{ssec:i}

With estimates (\ref{b14b}), (\ref{b14}), \eqref{b23} at hand, we are ready to apply the method of Krylov--Bogoliubov \cite[Section 3.1]{daPrZa} to construct an invariant measure for system (\ref{b1})
with fixed parameters $R$, $N$, $\ep$, and $\delta$. For $\underline{r}>0$ we define the set
$$\mathcal R=\mathcal R_{\underline{r}}=\{(r,\bfv)\in C^{2+\nu}(\mathbb T^3)\times H_N;\,\,\underline{r}^{-1}\leq r\leq \underline{r},\,\,\|\nabla r\|_{L^\infty(\mt)}\leq\underline{r}\}.$$
It will be the state space for solutions to \eqref{b1}. By $C_b(\mathcal R)$ we denote the space of continuous bounded functions on $\mathcal R$.

First of all, we remark that the approximate system \eqref{b1} can be solved using the Banach fixed point theorem as in \cite[Section 3]{BrHo}.
In  what follows, for an $\mathfrak{F}_s$-measurable $\mathcal{R}$-valued random variable $\eta$, we denote by $\bfU^\eta_{s,t}=(\varrho^\eta_{s,t},\vu^\eta_{s,t})$ the solution of \eqref{b1} at time $t$ starting at time $s$ from the initial condition $\eta$. If $s=0$ then we write simply $\bfU^\eta_{t}$. We obtain the following result.

\begin{Theorem}
There is $\underline{r}$ large enough such that the following holds. Let $0\leq s<t$ be given.
 Let $\eta$ be an $\mathfrak{F}_s$-measurable $\mathcal R$-valued initial condition. Then there exists $\bfU_s^\eta=(\varrho^\eta_{s},\bfu^\eta_s)\in L^2(\Omega;C([s,t];\mathcal R))$ which is the unique strong pathwise solution to \eqref{b1} starting from $\eta$ at time $s$. In addition, if $\eta_1$, $\eta_2$ are two such initial conditions then there is $\beta\in(0,2)$ such that
\begin{align}\label{eq:cont-dep}
\begin{aligned}
&\E\big\|\bfU^{\eta_1}_{s,t}-\bfU^{\eta_2}_{s,t}\big\|^2_{\mathcal R}\leq C(t-s,R,N,\varepsilon,\delta)\,\E\|\eta_1-\eta_2\|^\beta_{\mathcal R}.
\end{aligned}
\end{align}
\end{Theorem}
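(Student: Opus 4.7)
The theorem has two distinct parts and I would treat them separately: well-posedness in $\mathcal{R}$ for the approximate system \eqref{b1}, and the Hölder-type continuous dependence \eqref{eq:cont-dep}. Existence and pathwise uniqueness would follow by essentially the same Banach fixed point argument developed in \cite{BrHo}, exploiting the truncation $H(\|\vu\|_{H_N}-R)$ and the finiteness of the dimension of $H_N$. Indeed, the velocity equation, read in the finite-dimensional space $H_N$, is an Itô SDE whose drift is bounded and Lipschitz in $\vu$ (owing to the cutoff $H$, to the equivalence of all norms on $H_N$, and to the density bounds provided by Lemma \ref{lem:1608}), and whose diffusion $\vr\,\Pi_N\vc{g}_k(\vr,\vr\vu)$ is bounded and Lipschitz in $(\vr,\vu)$ by \eqref{m1}, \eqref{eq:project}, and square-summable in $k$ because $\sum_k \alpha_k^2<\infty$. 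Simultaneously, for a frozen velocity path $\vu \in C([s,t];H_N)$, the continuity equation in \eqref{b1} is a linear parabolic equation with regular coefficients, uniquely solvable in $C([s,t];C^{2+\nu}(\mt))$ by classical Schauder/maximal regularity theory. Iterating these two solvers on a short time interval and using the a priori estimates of Proposition \ref{prop:en} and Lemma \ref{lem:1608} to close the contraction globally gives a unique strong pathwise solution $\vU^\eta_s \in L^2(\Omega; C([s,t];C^{2+\nu}(\mt)\times H_N))$.

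To ensure that the solution actually lives in $\mathcal{R}$, I would invoke Lemma \ref{lem:1608}: the bounds \eqref{b23} and \eqref{b21} depend only on $\overline m, N, R, \ep$ and not on $\vu$, so, after shifting the time origin of their proofs from $0$ to $s$ and noting that the $W^{k,p}$ control for any $k,p$ (together with the Sobolev embedding $W^{k,p}\hookrightarrow C^{2+\nu}$ for $k$, $p$ sufficiently large) also yields a uniform $\|\nabla\vr\|_{L^\infty}$-bound, it suffices to pick $\underline r$ larger than the maximum of all these constants and the corresponding bound on $\|\vu\|_{H_N}$ coming from the truncation $H$. This guarantees $\vU^\eta_{s,\cdot} \in L^2(\Omega; C([s,t];\mathcal{R}))$.

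For the continuous dependence, set $\bar\vr := \vr^{\eta_1}_{s,\cdot}-\vr^{\eta_2}_{s,\cdot}$ and $\bar\vu := \vu^{\eta_1}_{s,\cdot}-\vu^{\eta_2}_{s,\cdot}$. Subtracting the equations and applying Itô's formula to $\|\bar\vu\|_{H_N}^2$, together with a standard $L^2$-energy estimate for the linear parabolic equation satisfied by $\bar\vr$, all nonlinearities become Lipschitz in $(\bar\vr,\bar\vu)$ thanks to the truncation $H$, the finite dimension of $H_N$, the pointwise density bounds from Lemma \ref{lem:1608}, and the regularity assumption \eqref{m1}. Applying the Burkholder--Davis--Gundy inequality to the stochastic integral and Grönwall's lemma then yields
\begin{equation*}
\E \Big( \sup_{\tau\in[s,t]} \|\bar\vr(\tau)\|_{L^2(\mt)}^2 + \sup_{\tau\in[s,t]} \|\bar\vu(\tau)\|_{H_N}^2 \Big) \leq C(t-s, R, N, \varepsilon, \delta)\, \E\|\eta_1-\eta_2\|_{L^2(\mt)\times H_N}^2 .
\end{equation*}
The main obstacle is now the upgrade of this weak-norm Lipschitz estimate to the strong norm of $\mathcal{R}$ which involves $C^{2+\nu}$-regularity of $\bar\vr$. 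I would argue by interpolation: Lemma \ref{lem:1608} (with a slightly larger exponent $\nu'\in(\nu,1)$) provides a deterministic bound on $\|\vr^{\eta_i}\|_{C^{2+\nu'}(\mt)}$, hence on $\|\bar\vr\|_{C^{2+\nu'}(\mt)}$, and the classical interpolation inequality
\begin{equation*}
\|\bar\vr\|_{C^{2+\nu}(\mt)} \leq C\, \|\bar\vr\|_{L^2(\mt)}^{1-\theta}\, \|\bar\vr\|_{C^{2+\nu'}(\mt)}^{\theta}
\end{equation*}
for a suitable $\theta\in(0,1)$, combined with Hölder's inequality in $\Omega$, transforms the $L^2$-estimate above into \eqref{eq:cont-dep} with $\beta = 2(1-\theta)\in(0,2)$. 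The loss of the exponent from $2$ to $\beta<2$ is precisely the price paid for measuring the difference in a stronger norm than the one in which the difference equation is energy-estimated, and is the only genuinely new ingredient beyond the arguments already present in \cite{BrHo}.
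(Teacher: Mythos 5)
Your proposal is correct and follows essentially the same architecture as the paper's proof: cite \cite[Section 3]{BrHo} for existence and uniqueness, use Lemma \ref{lem:1608} to place the solution in $\mathcal R$, establish a Lipschitz estimate for the difference in a weak norm, and upgrade it to the $C^{2+\nu}$-norm via interpolation against the deterministic high-regularity bound, paying the price $\beta<2$. The only (inessential) differences are technical: where you derive an $L^2$-energy estimate on $\bar\vr$ directly from the difference equation, the paper instead invokes the deterministic density stability estimate \cite[Lemma 2.2]{feireisl1} in $W^{1,2}(\mt)$ in terms of $\sup_t\|\bar\vu\|_{H_N}$; and where you interpolate $C^{2+\nu}$ between $L^2$ and $C^{2+\nu'}$, the paper chooses $l$ with $W^{l,2}\hookrightarrow C^{2+\nu}$ and interpolates $W^{l,2}$ between $W^{1,2}$ and $W^{l+1,2}$, the latter controlled deterministically by Lemma \ref{lem:1608}. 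Both variants encode the same idea and yield the same conclusion.
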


\begin{proof}
The existence of the unique strong pathwise solution was established in \cite[Section 3]{BrHo}. In addition, by means of Lemma \ref{lem:1608}, the solution belongs to $L^2(\Omega;C([s,t];\mathcal R))$ if we choose $\underline{r}$ large enough. Following \cite[Section 3]{BrHo} we obtain
\begin{align*}
\E\big\|\bfu^{\eta_1}_{s,t}-\bfu^{\eta_2}_{s,t}\big\|^2_{H_N}\leq \,\E\sup_{s\leq\sigma\leq t}\big\|\bfu^{\eta_1}_{s,\sigma}-\bfu^{\eta_2}_{s,\sigma}\big\|^2_{H_N}\leq C(t-s,R,N,\varepsilon,\delta)\,\E\|\eta_1-\eta_2\|^2_{\mathcal R}.
\end{align*}
Moreover, \cite[Lemma 2.2]{feireisl1} implies
\begin{align*}
&\sup_{s\leq\sigma\leq t}\big\|\varrho^{\eta_1}_{s,\sigma}-\varrho^{\eta_2}_{s,\sigma}\big\|_{W^{1,2}(\mathbb T^3)}\leq C(t-s,R,N,\varepsilon,\delta)\sup_{s\leq\sigma\leq t}\|\bfu^{\eta_1}_{s,\sigma}-\bfu^{\eta_2}_{s,\sigma}\|_{H_N}
\end{align*}
$\mathbb P$-a.s. and hence
\begin{align*}
\E\big\|\varrho^{\eta_1}_{s,t}-\varrho^{\eta_2}_{s,t}\big\|^\beta_{W^{1,2}(\mathbb T^3)}\leq C(t-s,R,N,\varepsilon,\delta)\,\E\|\eta_1-\eta_2\|^\beta_{\mathcal R}.
\end{align*}
for any $\beta>0$.
In order to obtain the final estimate we choose $l\in\mathbb N$ such that $W^{l,2}(\mathbb T^3)\hookrightarrow C^{2+\nu}(\mathbb T^3)$ and interpolate $W^{l,2}(\mathbb T^3)$ between $W^{l+1,2}(\mathbb T^3)$ and $W^{1,2}(\mathbb T^3)$. Using Lemma \ref{lem:1608} this implies
for some $\beta\in(0,2)$
\begin{align*}
\E\big\|\varrho^{\eta_1}_{s,t}-\varrho^{\eta_2}_{s,t}\big\|^2_{C^{2+\nu}(\mathbb T^3)}&\leq\,c\, \E\big\|\varrho^{\eta_1}_{s,t}-\varrho^{\eta_2}_{s,t}\big\|^2_{W^{l,2}(\mathbb T^3)}\\
&\leq\,c\, \E\big\|\varrho^{\eta_1}_{s,t}-\varrho^{\eta_2}_{s,t}\big\|^\beta_{W^{1,2}(\mathbb T^3)}\big\|\varrho^{\eta_1}_{s,t}-\varrho^{\eta_2}_{s,t}\big\|_{W^{l+1,2}(\mathbb T^3)}^{2-\beta}\\
&\leq C(t-s,R,N,\varepsilon,\delta)\,\E\|\eta_1-\eta_2\|^\beta_{\mathcal R}.
\end{align*}
\end{proof}

Let us now define the operators $P_t$ by
$$(P_t\varphi)(\eta):=\E\big[\varphi\big(\bfU^\eta_t\big)\big]\qquad \varphi\in C_b(\mathcal{R}).$$

\begin{Corollary}
  The equation \eqref{b1} defines a Feller Markov process, that is, $P_t:C_b(\mathcal{R})\to C_b(\mathcal{R})$ and
 \begin{equation}\label{eq:markov}
 \E [\varphi (\bfU^\eta_{t + s}) | \mathfrak{F}_t ] = (P_s
     \varphi) (\bfU^\eta_t) \qquad \forall \varphi \in C_b (\mathcal{R}), \hspace{1em} \forall
     \eta \in H, \hspace{1em} \forall t, s > 0.
 \end{equation}
  Besides, the semigroup property $P_{t+s}=P_t\circ P_s$ holds true.
\end{Corollary}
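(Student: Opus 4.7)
The plan is to derive the Feller property directly from the continuous dependence estimate \eqref{eq:cont-dep}, then obtain the Markov property \eqref{eq:markov} from pathwise uniqueness together with time-homogeneity of the driving noise; the semigroup property will follow immediately by the tower rule.

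For the Feller property, fix $\varphi\in C_b(\mathcal R)$. Boundedness of $P_t\varphi$ is immediate from $|(P_t\varphi)(\eta)|\leq\|\varphi\|_\infty$. For continuity, given a deterministic sequence $\eta_n\to\eta$ in $\mathcal R$, I would apply \eqref{eq:cont-dep} with $s=0$ to conclude $\E\|\bfU^{\eta_n}_t-\bfU^\eta_t\|_{\mathcal R}^\beta\to 0$, hence $\bfU^{\eta_n}_t\to\bfU^\eta_t$ in probability in $\mathcal R$; dominated convergence, using continuity and boundedness of $\varphi$, then yields $(P_t\varphi)(\eta_n)\to(P_t\varphi)(\eta)$.

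For the Markov property, fix $t,s>0$ and $\eta\in\mathcal R$. The key observation is that the time-shifted noise $\tilde W_u:=W_{t+u}-W_t$ is an $(\mathfrak F_{t+u})_{u\geq 0}$-cylindrical Wiener process independent of $\mathfrak F_t$. Interpreting $u\mapsto\bfU^\eta_{t+u}$ on $[0,s]$ as the unique strong pathwise solution to \eqref{b1} with initial datum $\bfU^\eta_t$ at time $t$ driven by the increments of $W$ after time $t$, the strong existence and pathwise uniqueness proved in \cite[Section 3]{BrHo} (which applies verbatim to $\mathcal R$-valued $\mathfrak F_t$-measurable initial data) produces a deterministic measurable solution map $\Phi:\mathcal R\times C([0,s];\mathfrak U_0)\to\mathcal R$ such that
\[
\bfU^\eta_{t+s}=\Phi(\bfU^\eta_t,\tilde W|_{[0,s]})\quad\prst\text{-a.s.},\qquad \bfU^\zeta_s=\Phi(\zeta,W|_{[0,s]})\ \text{for every deterministic}\ \zeta\in\mathcal R.
\]
Since $\bfU^\eta_t$ is $\mathfrak F_t$-measurable while $\tilde W|_{[0,s]}$ is independent of $\mathfrak F_t$, the factorization lemma for conditional expectations gives
\[
\E\bigl[\varphi(\bfU^\eta_{t+s})\,\big|\,\mathfrak F_t\bigr]=\bigl(\E\,\varphi(\Phi(\zeta,\tilde W|_{[0,s]}))\bigr)\Big|_{\zeta=\bfU^\eta_t}=(P_s\varphi)(\bfU^\eta_t),
\]
which is exactly \eqref{eq:markov}. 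Taking expectations on both sides and using the tower rule yields $P_{t+s}\varphi=P_t(P_s\varphi)$.

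The only genuinely non-trivial ingredient is the construction of the deterministic solution map $\Phi$ and the verification that $\bfU^\eta_{t+s}=\Phi(\bfU^\eta_t,\tilde W|_{[0,s]})$ $\prst$-a.s. when the initial condition is the random variable $\bfU^\eta_t$. This is an instance of the Yamada--Watanabe principle in the infinite-dimensional setting; however, since strong existence in $L^2(\Omega;C([s,t];\mathcal R))$ and pathwise uniqueness are already available from \cite{BrHo}, the measurable selection of $\Phi$ and the identification above are routine. Everything else is classical measure-theoretic bookkeeping.
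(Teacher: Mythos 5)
Your Feller argument coincides with the paper's: apply \eqref{eq:cont-dep} and dominated convergence. For the Markov property, however, you take a genuinely different route. The paper first uses pathwise uniqueness to write $\bfU^\eta_{t+s}=\bfU^{\bfU^\eta_t}_{t,t+s}$, then reduces the claim to $\mathfrak{F}_t$-measurable random initial data of the form $\bfV=\sum_i\bfV^i\ind_{A^i}$ with deterministic $\bfV^i$ and disjoint $A^i\in\mathfrak{F}_t$; for deterministic $\bfV$ the solution on $[t,t+s]$ is measurable with respect to the noise increments after time $t$ and hence independent of $\mathfrak{F}_t$, and time-homogeneity (equality of laws of $\bfU^{\bfV}_{t,t+s}$ and $\bfU^{\bfV}_s$) closes the argument. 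Your proof instead invokes the existence of a Borel-measurable solution map $\Phi$ à la Yamada--Watanabe and applies the freezing (factorization) lemma for conditional expectations. Both are correct. The paper's route is deliberately lower-tech: it needs nothing beyond pathwise uniqueness, the independence of non-overlapping Wiener increments, and approximation by simple functions, thereby sidestepping the construction of a measurable selection $\Phi$. Your route is cleaner conceptually but front-loads a nontrivial ingredient; you should be aware that extracting a Borel-measurable $\Phi:\mathcal R\times C([0,s];\mathfrak{U}_0)\to\mathcal R$ from strong existence and pathwise uniqueness in this quasi-Polish/infinite-dimensional setting is a real (if standard) step rather than ``routine bookkeeping,'' and in effect you are re-deriving exactly the structure the paper's simple-function argument is designed to avoid assuming. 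The semigroup property in both proofs follows by taking expectations in \eqref{eq:markov}.
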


\begin{proof}
  The Feller property $P_t:C_b(\mathcal{R})\to C_b(\mathcal{R})$ is an immediate consequence of \eqref{eq:cont-dep} and the dominated convergence theorem.

  In order to establish the Markov property \eqref{eq:markov}, we shall prove that
  \[ \E [\varphi (\bfU^\eta_{t + s}) Z] =\E [(P_s \varphi) (\bfU^\eta_t)
     Z] \qquad \forall Z \in \mathfrak{F}_t . \]
  By uniqueness
  \[ \bfU^\eta_{t + s} = \bfU^{\bfU^\eta_t}_{t, t + s} \qquad \p\text{-a.s.}. \]
It is therefore sufficient to show that
  \[ \E [\varphi (\bfU^{\mathbf{V}}_{t, t + s}) Z] =\E [(P_s \varphi)
     (\mathbf{V}) Z] \]
holds true  for every $\mathfrak{F}_t$-measurable random variable $\mathbf{V}$. By
  approximation (one uses dominated convergence and the fact that $\bfV_n
  \rightarrow \bfV$ in $E$ implies $P_t \varphi (\bfV_n) \rightarrow P_t
  \varphi (\bfV)$ in $\R$ a.s.), it is enough to prove it for random
  variables $\bfV = \sum_{i = 1}^k \bfV^i \ind_{A^i}$ where $\bfV^i \in \mathcal{R}$ are deterministic and $(A^i)\subset \mathfrak{F}_t$ is a collection of disjoint sets such that $\cup_i A^i=\Omega$. Consequently, it is enough to prove it for every
  deterministic $\bfV\in E$. Now, the random variable $\bfU^{\bfV}_{t, t + s}$ depends
  only on the increments of the Brownian motion between $t$ and $t + s$, hence
  it is independent of $\mathfrak{F}_t .$ Therefore
  \[ \E [\varphi (\bfU^{\bfV}_{t, t + s}) Z] =\E [\varphi
     (\bfU^{\bfV}_{t, t + s})] \E [Z] . \]
  Since $\bfU^{\bfV}_{t, t + s}$ has the same law as $\bfU^{\bfV}_s$ by uniqueness,
  we have
  \[ \E [\varphi (\bfU^{\bfV}_{t, t + s}) Z] =\E [\varphi
     (\bfU^{\bfV}_s)] \E [Z] = P_s \varphi (\bfV) \E [Z]
     =\E [P_s \varphi (\bfV) Z] \]
  and the proof of \eqref{eq:markov} is complete.

  Taking expectation in \eqref{eq:markov} we get on the one hand
\[ \E [\E [\varphi (\bfU^\eta_{t + s}) | \mathfrak{F}_t
   ]] =\E [\varphi (\bfU^\eta_{t + s})] = (P_{t + s} \varphi) (\eta)
\]
and on the other hand
\[ \E [(P_s \varphi) (\bfU^\eta_t)] = (P_t (P_s \varphi)) (\eta) . \]
Thus the semigroup property follows.
\end{proof}

For an $\mathfrak{F}_0$-measurable random variable $\eta\in \mathcal{R}$, let $\mu_{t,\eta}$ denote the law of $\bfU_t^\eta$. If the law of $\eta$ is $\mu$ then it follows from the definition of the operator $P_t$ that $\mu_{t,\eta}=P_t^*\mu$. For the application of the Krylov--Bogoliubov method, we shall prove the following result.

\begin{Proposition}
Let the initial condition be given by \eqref{b13}, that is $\eta\equiv(1,0)\in\mathcal{R}$.
Then the set of laws
$$\left\{\frac{1}{T}\int_0^{T}\mu_{s,\eta}\,\dif s;\;T>0\right\}$$
is tight on $\mathcal{R}$.
\end{Proposition}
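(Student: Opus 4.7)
Tightness of the family $\bigl\{\tfrac{1}{T}\int_0^T\mu_{s,\eta}\,\mathrm{d}s\bigr\}_{T>0}$ on $\mathcal R$ (viewed as a subset of $C^{2+\nu}(\mathbb T^3)\times H_N$) amounts to producing, for every $\epsilon>0$, a compact set $K_\epsilon\subset\mathcal R$ with
\[
\sup_{T>0}\;\frac{1}{T}\int_0^T\mathbb P\bigl(\mathbf U_s^\eta\notin K_\epsilon\bigr)\,\mathrm{d}s\le\epsilon.
\]
I would construct $K_\epsilon$ as a product $D\times\overline B_{M_\epsilon}^{H_N}$, where $D$ is a deterministic compact set that contains the density at every time almost surely, and $\overline B_{M_\epsilon}^{H_N}$ is the closed ball of radius $M_\epsilon$ in the finite-dimensional space $H_N$ (hence automatically compact). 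The velocity ball will be chosen via Markov's inequality from the time-averaged energy estimate, and the density set comes directly from the strong regularity provided by Lemma \ref{lem:1608}.

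\textbf{Density component.} By Lemma \ref{lem:1608}(a)(b), for every $\tau\ge 1$ one has $\|\varrho(\tau,\cdot)\|_{W^{k,p}(\mathbb T^3)}\le A_0$ and $\varrho(\tau,\cdot)\ge\underline\varrho>0$, with constants depending only on $\overline m,k,p,N,R,\varepsilon$. Choosing $k,p$ so large that $W^{k,p}(\mathbb T^3)\hookrightarrow\hookrightarrow C^{2+\nu}(\mathbb T^3)$, the set
\[
D:=\bigl\{r\in C^{2+\nu}(\mathbb T^3): \|r\|_{W^{k,p}}\le A,\; \underline\varrho\le r\le\underline r,\; \|\nabla r\|_{L^\infty}\le\underline r\bigr\}
\]
is compact. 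For $s\in[0,1]$, classical parabolic regularity for \eqref{eq:1508} with the smooth data $\varrho_0\equiv 1$ and the uniformly bounded coefficient field $[\mathbf u]_R$ (whose $W^{1,\infty}$-norm is controlled by $R$ and $N$) yields a bound $\|\varrho(s)\|_{W^{k,p}}\le A_1$ on $[0,1]$; enlarging $A:=\max(A_0,A_1)$, one has $\varrho(s,\cdot)\in D$ for all $s\ge 0$, $\mathbb P$-a.s.

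\textbf{Velocity component.} Proposition \ref{prop:en} (combined with the equivalence of norms on the finite-dimensional space $H_N$) gives
\[
\frac{1}{T}\,\mathbb E\int_0^T\|\mathbf u(s)\|_{H_N}^2\,\mathrm{d}s\;\le\; c_*(\varepsilon,N,G),
\]
uniformly in $T>0$. Hence, by Markov's inequality,
\[
\frac{1}{T}\int_0^T\mathbb P\bigl(\|\mathbf u(s)\|_{H_N}>M\bigr)\,\mathrm{d}s\;\le\;\frac{c_*}{M^2}.
\]
Given $\epsilon>0$, I would select $M_\epsilon:=(c_*/\epsilon)^{1/2}$ and set $K_\epsilon:=D\times\overline B_{M_\epsilon}^{H_N}$. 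Since $\mathbf U_s^\eta=(\varrho(s),\mathbf u(s))$ with $\varrho(s)\in D$ almost surely, the event $\{\mathbf U_s^\eta\notin K_\epsilon\}$ is contained in $\{\|\mathbf u(s)\|_{H_N}>M_\epsilon\}$, and the required bound follows.

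\textbf{Main obstacle.} There is essentially no new obstacle at this stage: the two substantive pieces of work, namely the \emph{time-uniform} energy/velocity estimate in Proposition \ref{prop:en} and the \emph{deterministic} parabolic smoothing for the density in Lemma \ref{lem:1608}, have already been established. The only subtlety worth flagging is making sure the initial short-time interval $s\in[0,1]$ is treated (to obtain a density bound valid for all $s\ge 0$, not merely $s\ge 1$), which is harmless because the initial datum $\varrho_0\equiv 1$ is smooth and the drift $[\mathbf u]_R$ is pointwise bounded in $W^{1,\infty}$ by the cut-off.
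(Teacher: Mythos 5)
Your proof is correct and follows essentially the same approach as the paper: Markov/Chebyshev with the time-averaged velocity estimate \eqref{b14} on the finite-dimensional $H_N$, combined with the density bounds from Lemma \ref{lem:1608} and a compact embedding $W^{k,p}\hookrightarrow\hookrightarrow C^{2+\nu}$. The one small variation is that for the density component you argue deterministic inclusion ($\varrho(s)\in D$ a.s.), whereas the paper applies Chebyshev to $\sup_{t\geq 0}\E\|\varrho_t^\eta\|_{W^{k,p}}$; your version is a touch cleaner and also explicitly patches the window $s\in[0,1]$ that Lemma \ref{lem:1608} leaves uncovered, though you should also note that the lower bound $\varrho\geq\underline\varrho$ on $[0,1]$ follows from the same comparison ODE \eqref{b111} with $\underline\varrho(0)=1$ (otherwise membership in $\mathcal R$ on the initial interval is not fully justified).
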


\begin{proof}
Recall that $\mu_{s,\eta}$ are laws on the space $\mathcal{R}$. In particular, the second component is finite dimensional whereas the first one  not.
Let $\mu_{s,\eta}^\varrho$ and $\mu_{s,\eta}^\bfu$ denote the marginals of $\mu_{s,\eta}$ corresponding respectively to the first and second component of the solution. That is, $\mu_{s,\eta}^\varrho$ is the law of $\varrho^\eta_s$ on $C^{2+\nu}(\mt)$ and $\mu_{s,\eta}^\bfu$ is the law of $\bfu^\eta_s$ on $H_N$. It is then enough to establish tightness of both following sets separately:
\begin{equation}\label{tight}
\left\{\frac{1}{T}\int_0^{T}\mu^\bfu_{s,\eta}\,\dif s;\;T>0\right\},\qquad \left\{\frac{1}{T}\int_0^{T}\mu^\varrho_{s,\eta}\,\dif s;\;T>0\right\}.
\end{equation}

 As a consequence of \eqref{b14} and the equivalence of norms on $H_N$ we have
\begin{align*}
\frac{1}{T}\E\bigg[\int_0^T\|\bfu^\eta_t\|_{H_N}^2\dt\bigg]\leq c(N,\varepsilon,  G).
\end{align*}
Consequently,  for compact sets
$$B_R:=\left\{\bfv\in H_N;\, \|\bfv\|_{H_N}\leq R\right\}\subset H_N$$
by means of Chebyshev inequality we obtain
\begin{align*}
\frac{1}{T}\int_0^T\mu_{s,\eta}^\bfu(B^c_R)\,\dif s&=\frac{1}{T}\int_0^T\p(\|\bfu^\eta_s\|_{H_N}>R)\,\dif s\leq \frac{1}{R^2}\frac{1}{T}\E\bigg[\int_0^T\|\bfu^\eta_t\|_{H_N}^2\dt\bigg],
\end{align*}
which in turn implies the tightness of the first set in \eqref{tight}.
In order to establish tightness in the second component, we define
$$B_R:=\left\{r\in W^{k,p}(\mt);\,\|r\|_{W^{k,p}(\mt)}\leq R\right\}.$$
For $p\in(1,\infty)$ and $k\in\mn$ sufficiently large  this is a compact set in $C^{2+\nu}(\mt)$ hence making use of \eqref{b23} we have
\begin{align*}
\frac{1}{T}\int_0^T\mu_{s,\eta}^\varrho(B^c_R)\,\dif s&=\frac{1}{T}\int_0^T\p(\|\varrho^\eta_s\|_{W^{k,p}}>R)\,\dif s\leq \frac{1}{R}\sup_{t\geq 0}\E\|\varrho^\eta_t\|_{W^{k,p}}
\end{align*}
and the desired tightness follows.
\end{proof}

Finally, the Krylov--Bogoliubov theorem \cite[Theorem 3.1.1]{daPrZa} applies and yields the following.

\begin{Corollary}\label{cor:inv}
Fix $R,\,N\in\mn$, $\varepsilon,\,\delta>0$. Then there exists an invariant measure $\mathcal{L}_{\varrho,\vu}$ for the dynamics given by \eqref{b1}. In addition, as consequence of \eqref{eq:1508b},
$$\mathcal{L}_{\varrho,\vu}[r\geq\underline\varrho]=1,\qquad \mathcal{L}_{\varrho,\vu}\bigg[\int_{\mathbb T^3}r\dx=M_\varepsilon\bigg]=1.$$
\end{Corollary}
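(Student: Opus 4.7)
My plan is to read this off the Krylov--Bogoliubov theorem together with the two uniform bounds already in hand. The Feller property of the semigroup $\{P_t\}$ on $C_b(\mathcal R)$ and tightness of the family $\{\tfrac{1}{T}\int_0^T\mu_{s,\eta}\dif s\}_{T>0}$ on $\mathcal R$ for the deterministic initial datum $\eta=(1,0)$ were verified in the previous Corollary and Proposition. So I would choose a sequence $T_n\to\infty$ along which these time averages converge weakly on $\mathcal R$ to some probability measure $\mathcal L_{\vr,\vu}$; the standard Krylov--Bogoliubov argument (combining the Feller property with the semigroup identity $P_t^*\mu_{s,\eta}=\mu_{s+t,\eta}$) then guarantees that $\mathcal L_{\vr,\vu}$ is invariant, i.e. $P_t^*\mathcal L_{\vr,\vu}=\mathcal L_{\vr,\vu}$ for every $t\geq 0$.

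The two support statements are obtained by pushing almost-sure bounds through the weak limit. For the density lower bound, Lemma \ref{lem:1608}(b) furnishes a deterministic constant $\underline\vr=\underline\vr(\overline m,N,R,\varepsilon)>0$ such that $\vr^\eta_s\geq\underline\vr$ pointwise on $\mathbb T^3$, $\p$-a.s., for every $s\geq 1$. Consequently the set
\[
A=\{(r,\bfv)\in\mathcal R:\ r\geq\underline\vr\ \text{on}\ \mathbb T^3\}
\]
is closed in $\mathcal R$ (the $C^{2+\nu}$ topology is stronger than pointwise convergence) and satisfies $\mu_{s,\eta}(A)=1$ for all $s\geq 1$, so
$\tfrac{1}{T_n}\int_0^{T_n}\mu_{s,\eta}(A)\dif s\to 1$. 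The Portmanteau theorem applied to $A$ and the weakly convergent subsequence then yields $\mathcal L_{\vr,\vu}(A)=1$.

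For the mass constraint the key observation is that $\hat\vr(t)=\int_{\mathbb T^3}\vr^\eta_t\dx$ obeys the \emph{deterministic} ODE \eqref{b3}, which is decoupled from $\bfu$, and by \eqref{eq:1508b} satisfies $\hat\vr(t)\to M_\varepsilon$ uniformly in the sense that for every $\k>0$ there is a deterministic $T_\k$ with $|\hat\vr(s)-M_\varepsilon|\leq\k$ for all $s\geq T_\k$. Thus the closed set $B_\k=\{(r,\bfv)\in\mathcal R:\ |\int_{\mathbb T^3}r\dx-M_\varepsilon|\leq\k\}$ (closed because $r\mapsto\int r\dx$ is continuous on $C^{2+\nu}(\mathbb T^3)$) has $\mu_{s,\eta}(B_\k)=1$ for $s\geq T_\k$, and the same Cesaro--Portmanteau argument gives $\mathcal L_{\vr,\vu}(B_\k)=1$ for every $\k>0$; letting $\k\downarrow 0$ produces the claimed mass identity.

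I do not anticipate any serious obstacle: the Feller/Markov structure and the tightness have already been proven, so what remains is essentially bookkeeping—verifying that the constraint sets are closed in the right topology and that the almost-sure asymptotic bounds pass through weak convergence via Portmanteau. No new a priori estimates are needed.
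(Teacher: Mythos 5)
Your proposal is correct and coincides with the argument the paper intends: the invariant measure is extracted from the Cesàro averages via Krylov--Bogoliubov using the Feller property and the tightness already established, and the two support constraints follow by applying the Portmanteau inequality for closed sets to the asymptotic almost-sure bounds from Lemma \ref{lem:1608}(b) and \eqref{eq:1508b}. The paper states the corollary without a written proof, so your write-up simply fills in the implicit details in exactly the expected way.
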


\section{First limit procedures: $R\to\infty$, $N\to\infty$}
\label{d}

The existence of an invariant measure for the zero level approximate problem  \eqref{b1} implies the existence of a stationary solution $[\vr_R,\bfu_R]$. Our ultimate goal is  to perform successively the limits for $R \to \infty$, $N \to \infty$,
$\ep \to 0$, and finally $\delta \to 0$. Even though this may look like a straightforward modification of the arguments used in the existence proof
\cite{BrHo}, there are several new aspects that must be handled. First of all, the uniform bounds used in the existence proof \cite{BrHo} are controlled by the initial data. This is not the case for the stationary solution for which the
``initial value'' is not {\it a priori} known and the necessary estimates must be deduced from the energy balance (\ref{b4a}) using the fact that the solution
possesses the same law at any time. Moreover, the estimates derived in the previous section, that is, Proposition \ref{prop:en} and Lemma \ref{lem:1608} do not hold independently of the approximation parameters $R,N,\varepsilon,\delta$ and are therefore not suitable for the limit procedure.
In addition, since the point-values of the density are not compact, the proof of the strong convergence of the approximate densities based on continuity of the effective viscous flux must be modified.

Let $[\vr_{R}, \vu_{R}]$  be a solution of the approximate problem (\ref{b1}) whose law at (every) time $t$ is given by the invariant measure
$\mathcal{L}_{\vr_{R}, \vu_{R}}$ constructed in Corollary \ref{cor:inv}. As the first step, we show a new uniform bound for $[\vr_{R}, \vu_{R}]$ that can be deduce from the energy balance (\ref{b4a}). Note that at this stage, the estimate still blows up as $\varepsilon\to0$.

\begin{Proposition}\label{prop:90}
Let $[\vr_{R}, \vu_{R}]$ be a stationary solution to \eqref{b1} given by the invariant measure  from Corollary \ref{cor:inv}. Then we have for all $n\in\mn$ and  a.e. $t\in(0,\infty)$
\begin{equation} \label{d51x}
\begin{split}
& \expe{ \left( \intO{ \left[ \frac{1}{2} \vr_{R} |\vu_{R}|^2 + \frac{a\gamma }{\gamma - 1} \vr_{R}^\gamma+ \frac{\delta \Gamma }{\Gamma - 1} \vr_{R}^{\Gamma} \right] } \right)^n } \leq c(n, G,\varepsilon),
\end{split}
\end{equation}
$$
\expe{\|\vu_R\|_{W^{1,2}(\mt)}^2}\leq c(G,\ve).
$$
\end{Proposition}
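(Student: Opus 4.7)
The strategy is to lift the uniform-in-time bounds of Proposition~\ref{prop:en}, valid along the distinguished trajectory starting at $\eta=(1,0)$, to the invariant measure $\mathcal{L}_{\vr,\vu}$ produced in Corollary~\ref{cor:inv} via Krylov--Bogoliubov. No new Itô computation is required: Proposition~\ref{prop:en} already contains all the dissipation work, so the task is purely to push bounds through a weak limit of measures.

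Since $[\vr_R,\vu_R]$ is stationary with one-time marginal $\mathcal{L}_{\vr,\vu}$, each of the claimed moment bounds reduces to an estimate of the form $\int_{\mathcal R} F\,d\mathcal{L}_{\vr,\vu}\le c$ for a suitable functional $F$ on $\mathcal R$. By the Krylov--Bogoliubov construction there is a sequence $T_k\to\infty$ along which the time-averaged measures $\nu_k:=\frac{1}{T_k}\int_0^{T_k}\mu_{s,\eta}\,ds$ converge weakly on $\mathcal R$ to $\mathcal{L}_{\vr,\vu}$, and Fubini yields
\[
\int_{\mathcal R} F\,d\nu_k=\frac{1}{T_k}\int_0^{T_k}\E\, F\bigl(\vr^\eta_s,\vu^\eta_s\bigr)\,ds.
\]
So it suffices to control this time-integral uniformly in $k$ and then transfer the bound to $\mathcal{L}_{\vr,\vu}$.

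For \eqref{d51x} I take $F(r,v)=\bigl(\int_{\mathbb T^3}[\tfrac{1}{2} r|v|^2+\tfrac{a}{\gamma-1}r^\gamma+\tfrac{\delta}{\Gamma-1}r^\Gamma]\,dx\bigr)^n$; the bound \eqref{b14b} gives $\E F(\vr^\eta_s,\vu^\eta_s)\le c(n,\varepsilon,G)$ uniformly in $s$, hence $\int F\,d\nu_k\le c(n,\varepsilon,G)$ uniformly in $k$. The functional $F$ is continuous on $\mathcal R$, since on $\mathcal R$ the density is bounded above and below and $H_N$ is finite-dimensional, so each truncation $F\wedge M$ belongs to $C_b(\mathcal R)$. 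Weak convergence of $\nu_k$ then gives $\int(F\wedge M)\,d\mathcal{L}_{\vr,\vu}\le c(n,\varepsilon,G)$, and monotone convergence as $M\to\infty$ delivers $\int F\,d\mathcal{L}_{\vr,\vu}\le c(n,\varepsilon,G)$. The extra factors $\gamma,\Gamma$ that appear in the target coefficients of \eqref{d51x} compared with \eqref{b14b} are absorbed into the constant via the pointwise inequality $\tfrac{a\gamma}{\gamma-1}r^\gamma+\tfrac{\delta\Gamma}{\Gamma-1}r^\Gamma\le \Gamma\bigl(\tfrac{a}{\gamma-1}r^\gamma+\tfrac{\delta}{\Gamma-1}r^\Gamma\bigr)$.

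For the velocity estimate, apply the same scheme with $(r,v)\mapsto\|v\|^2_{W^{1,2}(\mathbb T^3)}$, which is continuous on $\mathcal R$ by equivalence of norms on the finite-dimensional space $H_N$. The starting input is the already time-averaged estimate \eqref{b14}, which reads $\int\|v\|^2_{W^{1,2}}\,d\nu_k\le c(\varepsilon,G)$; truncation and monotone convergence transfer this bound to $\mathcal{L}_{\vr,\vu}$ and hence to $\E\|\vu_R\|^2_{W^{1,2}}$. The only conceptual point to watch is the passage of an unbounded functional through weak convergence of probability measures, which the truncation--monotone-convergence argument handles routinely; I expect no further obstacle here.
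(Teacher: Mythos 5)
Your proposal is correct, but it takes a genuinely different route from the paper's proof, and the difference is worth noting. The paper works directly with the stationary solution: it takes expectation in the It\^{o} expansion \eqref{b4a}, observes via Corollary~\ref{cor:00} that $\tfrac{\D}{\dt}\E\big[(\text{energy})^n\big]$ vanishes by stationarity, and then absorbs the remaining terms with the weighted Young inequality. You instead exploit the specific Krylov--Bogoliubov construction of $\mathcal{L}_{\vr,\vu}$ as a weak limit of time averages $\nu_{T_k}$, transfer the uniform-in-$s$ bound from Proposition~\ref{prop:en} by Tonelli, and push it through the weak limit via truncation and monotone convergence. Your steps check out: $F$ is continuous on $\mathcal{R}$ (finite-dimensional $H_N$, density pinched between $\underline{r}^{-1}$ and $\underline{r}$), the truncations $F\wedge M\in C_b(\mathcal{R})$, and the coefficient mismatch $\tfrac{a\gamma}{\gamma-1},\tfrac{\delta\Gamma}{\Gamma-1}$ versus $\tfrac{a}{\gamma-1},\tfrac{\delta}{\Gamma-1}$ is indeed harmless. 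What each approach buys: yours is lighter, reusing Proposition~\ref{prop:en} with no new stochastic calculus; the paper's is structurally preferable because the ``vanishing time derivative'' argument is a template that is reused at the subsequent approximation levels (Propositions~\ref{prop:1608} and \ref{prop:1808}), where uniqueness is lost after the $R\to\infty$ limit, the Krylov--Bogoliubov construction is no longer available, and hence your weak-convergence-of-time-averages argument would not apply. So your proof is a valid shortcut for this particular proposition, but it does not set up the machinery the paper needs downstream.
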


\begin{proof}
After taking expectations in (\ref{b4a}), we observe that due to stationarity of $[\vr_R,\vu_R]$, the first term is constant in time, thus its time derivative vanishes. This is a consequence of Corollary \ref{cor:00}. By the same reasoning we may ultimately omit the time integrals in all the remaining expressions. Then we apply (\ref{b4}), (\ref{b4b}) to estimate the terms coming from the stochastic integral and  obtain
\begin{equation*}% \label{d50}
\begin{split}
&\ep  \expe{ \left( \intO{ \left[ \frac{1}{2} \vr_{R} |\vu_{R}|^2 + \frac{a\gamma }{\gamma - 1} \vr_{R}^\gamma + \frac{\delta \Gamma }{\Gamma - 1} \vr_{R}^{\Gamma} \right] } \right)^n } \\
&\quad+  \expe{ \left( \intO{ \left[ \frac{1}{2} \vr_{R} |\vu_{R}|^2 + \frac{a}{\gamma - 1} \vr_{R}^\gamma + \frac{\delta}{\Gamma - 1} \vr_{R}^{\Gamma} \right] } \right)^{n-1}
 \intO{ \mathbb{S}(\Grad \vu_{R}) : \Grad \vu_{R} } } \\
&\quad+ \ep \expe{ \left( \intO{ \left[ \frac{1}{2} \vr_{R} |\vu_{R}|^2 + \frac{a}{\gamma - 1} \vr_{R}^\gamma + \frac{\delta}{\Gamma - 1} \vr_{R}^{\Gamma} \right] } \right)^{n-1}\intO{  \left( a \gamma \vr_{R}^{\gamma - 2} +\delta \vr_{R}^{\Gamma - 2} \right) |\Grad \vr_{R} |^2 } }
\\
&\leq c(n,G) \left( \intO{ \left[ \frac{1}{2} \vr_R |\vu_R|^2 + \frac{a}{\gamma - 1} \vr_R^\gamma + \frac{\delta}{\Gamma - 1} \vr_R^{\Gamma} \right] } \right)^{n-1}
\intO{ \left( \frac{a \gamma}{\gamma - 1} \vr_R^{\gamma - 1} + \frac{\delta \Gamma}{\Gamma - 1} \vr_R^{\Gamma - 1}
 \right) }  \\
&\quad+c\left( n,  G \right)\E\left( \intO{ \left[ \frac{1}{2} \vr_R |\vu_R|^2 + \frac{a}{\gamma - 1} \vr_R^\gamma + \frac{\delta}{\Gamma - 1} \vr_R^{\Gamma} \right] } \right)^{n-1}
\|\vr_R\|_{L^\gamma(\mt)}.
\end{split}
\end{equation*}
The application of the weighted Young inequality allows to absorb both terms on the right hand side into the first term on the left hand side. The claim follows.
\end{proof}

\begin{Proposition}\label{prop:1608a}
Let $[\vr_{R}, \vu_{R}]$ be a stationary solution to \eqref{b1} given by the invariant measure  from Corollary \ref{cor:inv}. Then we have for all $n\in\mn$, a.e. $T\in(0,\infty)$ and  $\tau>0$
\begin{equation} \label{b2xy}
\begin{split}
&\expe{\bigg(\sup_{t\in[T,T+\tau]}\intO{ \left[ \frac{1}{2} \vr_{R} |\vu_{R}|^2 + \frac{a}{\gamma - 1} \vr_{R}^\gamma + \frac{\delta}{\Gamma - 1} \vr_{R}^{\Gamma} \right] }\bigg)^n}\\
&\quad
+ 2 \ep\,\expe{ \bigg(\int_T^{T+\tau}\intO{ \left[ \frac{1}{2} \vr_{R} |\vu_{R}|^2 + \frac{a\gamma }{\gamma - 1} \vr_{R}^\gamma + \frac{\delta \Gamma }{\Gamma - 1} \vr_{R}^{\Gamma} \right] } \dt \bigg)^n}\\
&\quad+ \expe{\bigg(\int_T^{T+\tau} \|  \vu_{R}\|^2_{W^{1,2}(\mt;\R^3)}  \dt\bigg)^n}
+ \ep \,\expe{\bigg(\int_T^{T+\tau}\intO{  \left( a \gamma \vr_{R}^{\gamma - 2} + \delta \vr_{R}^{\Gamma - 2} \right) |\Grad \vr_{R} |^2 } \dt\bigg)^n}
\\
& \leq c(n,G,\ve,\tau),
\end{split}
\end{equation}
where the constant on the right hand side does not depend on $T$.
\end{Proposition}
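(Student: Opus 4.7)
The plan is to derive \eqref{b2xy} by first integrating the energy identity \eqref{b2} (rather than the $n$-th power identity \eqref{b4a}) over $[T,t]$ for $t\in[T,T+\tau]$, then taking the supremum over $t$ \emph{before} raising to the $n$-th power and passing to expectation, and finally controlling the resulting martingale term by the Burkholder--Davis--Gundy inequality. Stationarity of $[\vr_R,\vu_R]$ will be used to eliminate any $T$-dependence by reducing all remaining terms to pointwise-in-time moments that are already covered by Proposition \ref{prop:90}. Since the sought bound has $n$-th powers of time integrals (not integrals weighted by $E^{n-1}$), it is more natural to work with \eqref{b2} than with \eqref{b4a}.

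Abbreviating $E(t):=\intO{\big[\tfrac{1}{2}\vr_R|\vu_R|^2+\tfrac{a}{\gamma-1}\vr_R^\gamma+\tfrac{\delta}{\Gamma-1}\vr_R^\Gamma\big]}(t)$ and denoting by $D(t)\geq 0$ the sum of the dissipation terms on the left of \eqref{b2}, integration of \eqref{b2} from $T$ to $t\in[T,T+\tau]$ yields
\[
E(t)+\int_T^{t}D(s)\,\dif s=E(T)+\int_T^{t}\mathcal D(s)\,\dif s+M(t)-M(T),
\]
where $\mathcal D$ collects the drift terms on the right of \eqref{b2}, namely the It\^o correction $\tfrac12\sum_k\intO{\vr_R^{-1}|\vr_R\Pi_N\vc g_k|^2}$ and the mass-stabilizer contribution $H(\cdots)\intO{(\tfrac{a\gamma}{\gamma-1}\vr_R^{\gamma-1}+\tfrac{\delta\Gamma}{\Gamma-1}\vr_R^{\Gamma-1})}$, while $M(t)=\sum_k\int_0^t\intO{\vr_R\Pi_N\vc g_k\cdot\vu_R}\,\dif W_k$. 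Nonnegativity of $E$ and $D$ yields
\[
\sup_{t\in[T,T+\tau]}E(t)+\int_T^{T+\tau}D(s)\,\dif s\leq 2\,E(T)+2\int_T^{T+\tau}|\mathcal D(s)|\,\dif s+2\sup_{t\in[T,T+\tau]}|M(t)-M(T)|.
\]

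Raising to the $n$-th power and taking expectation leaves three terms to estimate. First, $\E E(T)^n=\E E(0)^n$ by stationarity, which is bounded by Proposition \ref{prop:90}. Second, by Jensen and stationarity,
\[
\E\Big(\int_T^{T+\tau}|\mathcal D(s)|\,\dif s\Big)^n\leq\tau^{n-1}\int_T^{T+\tau}\E|\mathcal D(s)|^n\,\dif s=\tau^{n}\,\E|\mathcal D(0)|^n,
\]
and after a weighted Young argument (using $\vr_R^{\gamma-1}\leq C_\kappa+\kappa\vr_R^\gamma$, and \eqref{b4} for the It\^o correction) the right-hand side is controlled by $\E E(0)^n$ and hence by Proposition \ref{prop:90}. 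Third, by \eqref{b4b} the quadratic variation of $M$ satisfies $\dif[M]_t\leq c(G)\|\vr_R\|_{L^\gamma(\mt)}E(t)\,\dif t$, so the Burkholder--Davis--Gundy inequality gives
\[
\E\sup_{t\in[T,T+\tau]}|M(t)-M(T)|^n\leq c_n\,\E\Big(\int_T^{T+\tau}\|\vr_R\|_{L^\gamma(\mt)}E(s)\,\dif s\Big)^{n/2}.
\]
Bounding $E(s)$ inside the integral by $\sup_{[T,T+\tau]}E$, applying Cauchy--Schwarz and weighted Young then produces $\kappa\,\E\sup_{[T,T+\tau]}E^n+c_\kappa\,\E\big(\int_T^{T+\tau}\|\vr_R\|_{L^\gamma(\mt)}\,\dif s\big)^n$; the second summand is again controlled by Proposition \ref{prop:90} and stationarity, while the first is absorbed on the left for $\kappa$ sufficiently small.

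The main obstacle is precisely this BDG bookkeeping: the quadratic variation of $M$ scales like $\|\vr_R\|_{L^\gamma}E$, which at the $n$-th power level is not directly absorbable into the available dissipation $(\int D)^n$. The remedy is to decouple by pulling $\sup_{[T,T+\tau]}E$ out of the quadratic-variation integral (a trivial pointwise bound), so that Cauchy--Schwarz separates a sup-term that can be absorbed on the left from a residual $\|\vr_R\|_{L^\gamma}$-integral whose moments are furnished by Proposition \ref{prop:90} and stationarity. Once the sup has been absorbed, the $W^{1,2}$-bound for $\vu_R$ is contained in $\int_T^{T+\tau}D$ via the Korn--Poincar\'e inequality \eqref{i7}, and the $\varepsilon$-weighted density-gradient bound is already present in $D$; the individual $n$-th power bounds for the four integrals in \eqref{b2xy} then follow from $(a_1+\dots+a_k)^n\geq a_j^n$. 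The restriction to a.e.\ $T\in(0,\infty)$ simply reflects the a.e.\ time regularity inherent in the weak formulation.
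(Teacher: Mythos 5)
Your proposal is correct and follows essentially the same route as the paper: both start from the energy identity \eqref{b2}, integrate on $[T,t]$, take the supremum before the $n$-th power and expectation, treat the martingale term by Burkholder--Davis--Gundy combined with pulling $\sup E$ out of the quadratic-variation integral and a weighted Young inequality to absorb, and reduce the remaining drift quantities to moments supplied by Proposition \ref{prop:90}. The only minor stylistic difference is that you control the drift integrals via Jensen and stationarity (reducing to pointwise moments at $t=0$ that \eqref{d51x} covers), whereas the paper instead applies a pointwise Young inequality and absorbs them into the $2\varepsilon$-weighted integral of the energy appearing on the left-hand side; both variants yield the same $\varepsilon$-dependent constant.
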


\begin{proof}
Taking the $n$-th power and expectation in the energy balance \eqref{b2} and applying \eqref{b4}, \eqref{b4b} and the Korn--Poincar\'e inequality, we deduce
\begin{equation} \label{b2x}
\begin{split}
&\expe{\sup_{t\in[T,T+\tau]}\intO{ \left[ \frac{1}{2} \vr_{R} |\vu_{R}|^2 + \frac{a}{\gamma - 1} \vr_{R}^\gamma + \frac{\delta}{\Gamma - 1} \vr_{R}^{\Gamma} \right] }}^n\\
&\quad
+ 2 \ep\,\expe{ \int_T^{T+\tau}\intO{ \left[ \frac{1}{2} \vr_{R} |\vu_{R}|^2 + \frac{a\gamma }{\gamma - 1} \vr_{R}^\gamma + \frac{\delta \Gamma }{\Gamma - 1} \vr_{R}^{\Gamma} \right] } \dt }^n\\
&\quad+ \expe{\int_T^{T+\tau} \|  \vu_{R}\|_{W^{1,2}(\mt;\R^3)}  \dt}^n
+ \ep \,\expe{\int_T^{T+\tau}\intO{  \left( a \gamma \vr_{R}^{\gamma - 2} + \delta \vr_{R}^{\Gamma - 2} \right) |\Grad \vr_{R} |^2 } \dt}^n
\\
& \leq c(n)\, \expe{\intO{ \left[ \frac{1}{2} \vr_{R}(T) |\vu_{R}(T)|^2 + \frac{a}{\gamma - 1} \vr_{R}^\gamma(T) + \frac{\delta}{\Gamma - 1} \vr_{R}^{\Gamma}(T) \right] } }^n\\
&\quad+ c(n,G)\, \expe{\int_T^{T+\tau}\|\vr_{R}\|_{L^\g(\mt)}\int_{\mt}\vr_{R}|\vu_{R}|^2\,\dif x\,\dif t}^\frac{n}{2}+c(n,G)\,\expe{\int_T^{T+\tau}\|\vr_{R}\|_{L^\g(\mt)}\dt}^n
\\
&\quad+ c(n)\,\expe{\int_T^{T+\tau} \intO{ \left( \frac{a \gamma}{\gamma - 1} \vr_{R}^{\gamma - 1} + \frac{\delta \Gamma}{\Gamma - 1} \vr_{R}^{\Gamma - 1}
 \right) }  \dt}^n.
\end{split}
\end{equation}
The first term on the right hand side can be estimated due to \eqref{d51x} by a constant $c(n,G,\ve)$. The third term on the right hand side can be estimated by Young's inequality as follows
\begin{align}\label{b2xx}
\begin{aligned}
&\expe{\int_T^{T+\tau}\|\vr_{R}\|_{L^\g(\mt)}\dt}^n\\
&\qquad\leq \frac{\ve}{2}\,\expe{\int_T^{T+\tau}\intO{ \left[ \frac{1}{2} \vr_{R} |\vu_{R}|^2 + \frac{a\gamma }{\gamma - 1} \vr_{R}^\gamma + \frac{\delta \Gamma }{\Gamma - 1} \vr_{R}^{\Gamma} \right] } \dt}^n +c(n,\ve,\tau)
\end{aligned}
\end{align}
and then absorbed into the second term on the left hand side of \eqref{b2x}. A similar approach applies to the last term on the right hand side of \eqref{b2x}. For the remaining term we write
\begin{align*}
&\expe{\int_T^{T+\tau}\|\vr\|_{L^\g(\mt)}\int_{\mt}\vr_{R}|\vu_{R}|^2\,\dif x\,\dif t}^\frac{n}{2}
\leq \expe{\sup_{t\in[T,T+\tau]}\intO{\frac{1}{2}\vr_{R}|\vu_{R}|^2 } \int_T^{T+\tau} \|\vr_{R}\|_{L^\g(\mt)}\,\dif t }^\frac{n}{2}\\
&\leq \kappa\,\expe {\sup_{t\in[T,T+\tau]}\intO{\frac{1}{2}\vr_R|\vu_R|^2}}^n + c(\kappa)\,\expe{ \int_{T}^{T+\tau} \|\vr_R\|_{L^\g(\mt)}\dt}^n,
\end{align*}
where the last term can be again estimated as in \eqref{b2xx}. Choosing $\k$ sufficiently small yields the claim.
\end{proof}

In view of the uniform bounds provided by Proposition \ref{prop:1608a} , for fixed $\ep,\delta > 0$, the asymptotic limits for $R \to \infty$ and $N \to \infty$ can be carried over exactly as for the initial value problem in \cite[Section 3, Section 4]{BrHo}.
In the limit, we obtain the following approximate system.

\begin{itemize}
\item {\bf Regularized equation of continuity.}
\begin{equation} \label{L1}
\begin{split}
\int_{0}^{\infty} &\intO{ \left[ \vr \partial_t \varphi + \vr \vu \cdot \Grad \varphi \right] } \dt \\ &= \ep \int_0^\infty \intO{ \left[\Grad \vr \cdot \Grad \varphi
- 2 \vr \varphi \right] }\dt - 2\ep \int_0^\infty \intO{ M_{\ep} \varphi } \dt
\end{split}
\end{equation}
for any $\varphi \in \DC((0, \infty) \times \mt)$ $\prst$-a.s.
\item {\bf Regularized momentum equation.}
\begin{equation} \label{L2}
\begin{split}
\int_0^\infty \partial_t \psi\intO{  \vr \vu \cdot \varphi} \dt  &+ \int_0^\infty\psi  \intO{ \vr \vu \otimes \vu : \Grad \varphi } \dt
+ \int_0^\infty \psi \intO{ (a \vr^\gamma+\delta\varrho^\beta)  \Div \varphi } \dt  \\
&-   \int_0^\infty\psi  \intO{ \mathbb{S}(\Grad \vu) : \Grad \varphi} \dt  - \ep \int_0^\infty \psi \intO{  \vr \vu \cdot \Delta \varphi } \dt \\&- 2 \ep \int_0^\infty \psi \intO{ \vr \vu \cdot \varphi } \ \dt
=- \sum_{k=1}^{\infty}\int_0^\infty \psi \intO{ \mathbf{G}_k \cdot \varphi } \ \D W_k
\end{split}
\end{equation}
for any $\psi \in \DC((0, \infty))$, $\varphi \in C^\infty(\mt; \R^3)$ $\prst$-a.s.
\end{itemize}

To summarize, we deduce the following.

\begin{Proposition}\label{prop:1608b}
Let $\varepsilon,\delta>0$ be given. Then there exists a stationary weak martingale solution $[\varrho_\varepsilon,\bfu_\varepsilon]$ to \eqref{L1}--\eqref{L2}.
In addition, for $n\in\mn$ and  every  $\psi\in C^\infty_c((0,\infty))$, $\psi\geq 0$, the following generalized energy inequality holds true
\begin{align} \label{b2a}
\begin{aligned}
&-\int_0^\infty\partial_t\psi \left( \intO{ \left[ \frac{1}{2} \vr_\ve |\vu_\ve|^2 + \frac{a}{\gamma - 1} \vr_\ve^\gamma + \frac{\delta}{\Gamma - 1} \vr_\ve^{\Gamma} \right] } \right)^n\dt\\
&\quad+ 2 \ep n \int_0^\infty\psi\left( \intO{ \left[ \frac{1}{2} \vr_\ve |\vu_\ve|^2 + \frac{a\gamma }{\gamma - 1} \vr_\ve^\gamma + \frac{\delta \Gamma }{\Gamma - 1} \vr_\ve^{\Gamma} \right] } \right)^n \dt\\
&\quad+ n \int_0^\infty\psi\left( \intO{ \left[ \frac{1}{2} \vr_\ve |\vu_\ve|^2 + \frac{a}{\gamma - 1} \vr_\ve^\gamma + \frac{\delta}{\Gamma - 1} \vr_\ve^{\Gamma} \right] } \right)^{n-1}
 \intO{ \mathbb{S}(\Grad \vu_\ve) : \Grad \vu_\ve } \dt \\
& \leq n\sum_{k=1}^\infty \int_0^\infty\psi\left( \intO{ \left[ \frac{1}{2} \vr_\ve |\vu_\ve|^2 + \frac{a}{\gamma - 1} \vr_\ve^\gamma + \frac{\delta}{\Gamma - 1} \vr_\ve^{\Gamma} \right] } \right)^{n-1} \intO{ \varrho_\ve\,\bfg_k(\varrho_\ve,\vr_\ve\vu_\ve) \cdot \vu_\ve } \, \D W_k \\
&\quad+ \frac{n}{2} \int_0^\infty\psi
\left( \intO{ \left[ \frac{1}{2} \vr_\ve |\vu_\ve|^2 + \frac{a}{\gamma - 1} \vr_\ve^\gamma + \frac{\delta}{\Gamma - 1} \vr_\ve^{\Gamma} \right] } \right)^{n-1}
\sum_{k=1}^\infty\intO{ \frac{1}{\vr_\ve} |\varrho_\ve\,\vc{g}_k(\varrho_\ve,\vr_\ve\vu_\ve)|^2 } \dt
\\
&\quad+n \int_0^\infty\psi\left( \intO{ \left[ \frac{1}{2} \vr_\ve |\vu_\ve|^2 + \frac{a}{\gamma - 1} \vr_\ve^\gamma + \frac{\delta}{\Gamma - 1} \vr_\ve^{\Gamma} \right] } \right)^{n-1}
\times \\
&\qquad\qquad\times H\left( \frac{M_\ve}{M_0} \right) \intO{ \left[ \frac{a \gamma}{\gamma - 1} \vr_\ve^{\gamma - 1} + \frac{\delta \Gamma}{\Gamma - 1} \vr_\ve^{\Gamma - 1}
 \right] } \ \dt \\
&\quad+ \frac{n (n-1)}{2} \int_0^\infty\psi\left( \intO{ \left[ \frac{1}{2} \vr_\ve |\vu_\ve|^2 + \frac{a}{\gamma - 1} \vr_\ve^\gamma + \frac{\delta}{\Gamma - 1} \vr_\ve^{\Gamma} \right] } \right)^{n-2}\times\\
&\qquad\qquad\times\sum_{k=1}^\infty\left( \intO{ \varrho_\ve\,\vc{g}_k(\vr_\ve,\vr_\ve\vu_\ve) \cdot \vu_\ve } \right)^2 \dt.
\end{aligned}
\end{align}
\end{Proposition}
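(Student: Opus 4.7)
The goal is to combine the $R,N$--uniform moment bounds provided by Proposition \ref{prop:1608a} with stochastic compactness to extract a stationary limit solving the regularized system \eqref{L1}--\eqref{L2}, and to derive the generalized energy inequality \eqref{b2a} by passing to the limit in the identity \eqref{b4a}. Throughout, we start from the stationary process $[\vr_R,\vu_R]$ (with $N=N(R)\to\infty$ chosen along a diagonal sequence) whose law at every time $t$ is the invariant measure from Corollary \ref{cor:inv}.

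\textbf{Step 1 (tightness).} Fix an arbitrary window $[T,T+\tau]$. Since the right-hand side of \eqref{b2xy} does not depend on $T$, Chebyshev's inequality provides tightness of the laws of $[\vr_R,\vu_R,\vr_R\vu_R]$, on every such window, in
$$
L^\Gamma([T,T+\tau]\times\mtor)_w \,\times\, L^2([T,T+\tau];W^{1,2}(\mtor;\R^3))_w \,\times\, L^{p}([T,T+\tau];L^{q}(\mtor;\R^3))_w,
$$
for suitable $p,q>1$ obtained by interpolating \eqref{d51x} with the Sobolev embedding for $\vu_R$. The parabolic regularization $\ep\Delta\vr$ in \eqref{b1} together with the $\ep$--dependent bound on $\nabla\vr_R^{\Gamma/2}$ in \eqref{b2xy} upgrades weak to strong compactness of the density in $L^r([T,T+\tau]\times\mtor)$ for some $r>\Gamma$, in particular in $L^1_{\rm loc}([0,\infty);L^\gamma(\mtor))$ after exhausting $[0,\infty)$ by such windows. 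Time--compactness of $\vr_R\vu_R$ is read off the momentum equation \eqref{b1}, whose right-hand side is bounded in $W^{-k,2}_x$ for $k$ large, uniformly in $R$ on each $[T,T+\tau]$. The Wiener process $W$ has a constant (tight) law on $C([0,\infty);\mathfrak U_0)$.

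\textbf{Step 2 (Jakubowski--Skorokhod).} Applying Jakubowski's representation theorem \cite[Theorem 2]{jakubow} on the quasi-Polish path space
$$
L^1_{\rm loc}([0,\infty);L^\gamma(\mtor)) \,\times\, L^2_{\rm loc}([0,\infty);W^{1,2}(\mtor;\R^3))_w \,\times\, C([0,\infty);\mathfrak U_0),
$$
we obtain, on a new stochastic basis, random variables $[\tilde\vr_R,\tilde\vu_R,\tilde W_R]$ with the same laws as $[\vr_R,\vu_R,W]$, converging $\tilde\p$--almost surely to some $[\tilde\vr_\ep,\tilde\vu_\ep,\tilde W_\ep]$.

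\textbf{Step 3 (identification of the limit equations).} Following the program of \cite{BrHo}, we verify that $[\tilde\vr_R,\tilde\vu_R,\tilde W_R]$ satisfies the approximate system \eqref{b1} on the new basis, then pass to the limit in every term. The only term whose identification requires the stationary setting is the nonlinear pressure: strong compactness of $\tilde\vr_R$ in $L^1_{\rm loc}L^\gamma$ is supplied directly by the parabolic term $\ep\Delta\vr$ together with the bound on $\nabla\vr_R^{\gamma/2}$, so that $p(\tilde\vr_R)\to p(\tilde\vr_\ep)$ in $L^1_{\rm loc, t,x}$, and the Lions identification of the effective viscous flux is \emph{not} needed at this level. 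The stochastic integral is handled by the elementary method of \cite{on1,BrHo}: the quadratic variation and the cross-variation with $\tilde W_R$ converge, and one identifies the limit as the It\^o integral with respect to $\tilde W_\ep$. The truncations involving $H(\|\vu\|_{H_N}-R)$ and the projection $\Pi_N$ disappear in the limit by \eqref{eq:project} and the bound in \eqref{b14b}.

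\textbf{Step 4 (stationarity).} For each $R$, the joint law of $[\mathcal S_\tau\vr_R,\mathcal S_\tau\vu_R,\mathcal S_\tau W-W]$ is independent of $\tau\geq0$: this is inherited from the invariant measure of Corollary \ref{cor:inv} combined with the stationarity of the increments of $W$. The stability-under-weak-convergence lemmas announced earlier (Lemma \ref{lem:stac} and Lemma \ref{lem:stac2}) propagate this property to $[\tilde\vr_\ep,\tilde\vu_\ep,\tilde W_\ep]$, yielding stationarity in the sense of Definition \ref{Dm2}.

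\textbf{Step 5 (energy inequality \eqref{b2a}).} The identity \eqref{b4a}, integrated against an arbitrary $\psi\in C^\infty_c((0,\infty))$, $\psi\geq 0$, with one integration by parts in $t$, holds exactly at the approximate level for each $R$. Taking expectations we pass to the limit $R\to\infty$ on both sides. On the left-hand side, the energy, the dissipation $\mathbb S(\nabla\vu):\nabla\vu$, and the gradient term $(a\gamma\vr^{\gamma-2}+\delta\vr^{\Gamma-2})|\nabla\vr|^2$ are all convex in their arguments and therefore weakly lower semicontinuous, producing the inequality $\leq$ after multiplication by $\psi\geq0$; the remaining terms on the right-hand side (the stochastic integral, the quadratic variation correction, and the two $H(\cdot)$--terms) converge as equalities thanks to strong compactness of $\tilde\vr_R$, the almost sure convergence of $\tilde W_R$, and the bound \eqref{m1} on $\vc g_k$. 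In the limit $H(\hat\vr/M_0)$ becomes $H(M_\ep/M_0)$ by \eqref{eq:1508b}, yielding exactly the right-hand side of \eqref{b2a}.

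\textbf{Main obstacle.} The delicate point is to preserve stationarity \emph{jointly} for the triple $[\vr_R,\vu_R,W]$ and for the shifted Wiener increments through the Skorokhod construction; once this is done, all other arguments are adaptations of \cite{BrHo}. A secondary technical issue is to justify the passage to the limit in the stochastic integral term \emph{inside} the energy functional of \eqref{b4a} (which carries the nonlinear factor $(\int[\cdot])^{n-1}$): this requires uniform integrability of the $n$-th moment, which is precisely what \eqref{d51x} and \eqref{b2xy} guarantee.
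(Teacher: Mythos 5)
Your proposal follows the same overall architecture as the paper's proof: stationary global-in-time moment bounds (Propositions \ref{prop:90} and \ref{prop:1608a}) replace the initial-condition control used in \cite{BrHo}, tightness is derived on $L^q_{\rm loc}$-type quasi-Polish path spaces, Jakubowski--Skorokhod is applied, stationarity is propagated by Lemmas \ref{lem:stac} and \ref{lem:stac2}, and the generalized energy inequality \eqref{b2a} is obtained from \eqref{b4a} by weak lower semicontinuity on the left and strong convergence plus a limit in the stochastic integral on the right. That much is faithful to the paper.

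The one genuine deviation, and a potential trap, is your Step 1--2 choice to pass to the limit along a single diagonal sequence $R,N\to\infty$ with compactness. The paper deliberately keeps these separate: the limit $R\to\infty$ for \emph{fixed} $N$ is a pathwise stopping-time argument on the original probability space (no Skorokhod), and only the limit $N\to\infty$ uses stochastic compactness. The reason this ordering is not cosmetic is that the cutoff $H(\|\vu\|_{H_N}-R)$ is a hard truncation of the $L^2$-norm of $\vu$ inside the finite-dimensional space $H_N$. For fixed $N$ all norms on $H_N$ are equivalent, so the $L^2(T,T+\tau;W^{1,2})$-type control from \eqref{b2xy} controls $\|\vu\|_{H_N}$ pointwise in time via the stopping time $\tau_R$, and one shows $\tau_R\to\infty$; the $R$-level and $R'$-level solutions then coincide pathwise up to the stopping time, so the limit is trivial to identify and manifestly preserves the invariant law. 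If instead you let $N\to\infty$ simultaneously, the norm equivalence constant blows up, and the bounds in \eqref{b2xy}/\eqref{d51x} no longer give you the pointwise-in-time, almost-sure control on $\|\vu_R(t)\|_{L^2}$ that you need to assert that the truncation becomes inactive: weak $L^2_tW^{1,2}_x$ convergence of $\tilde\vu_R$ after Skorokhod does not by itself yield a.e.-in-$t$ convergence of $\|\tilde\vu_R(t)\|_{L^2}$, which is what the cutoff feeds on. You would need an extra uniform-in-$(R,N)$ pointwise-in-time bound on $\|\vu_R(t)\|_{L^2}$ that survives the Skorokhod change of basis, and this is not supplied by the estimates you cite. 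Keeping the two passages separate, as the paper does, avoids this entirely. A minor secondary remark: for the stochastic integral the paper invokes the convergence result of \cite[Lemma 2.1]{debussche1} rather than the martingale-identification route of \cite{on1}; both work, but the uniform integrability you flag as the obstacle (needed because of the factor $(\int[\cdot])^{n-1}$) is indeed exactly what \eqref{d51x} and \eqref{b2xy} deliver, so that part is sound.
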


\begin{proof}
The proof follows the lines of \cite[Section 3, Section 4]{BrHo}. The first passage to the limit as $R\to\infty$ relies on a stopping time argument from \cite[Subsection 3.1]{BrHo} whereas the limit $N\to\infty$ employs the stochastic compactness method based on the Jakubowski-Skorokhod representation theorem \cite[Theorem 2]{jakubow} as in \cite[Section 4]{BrHo}. We point out that all the necessary estimates in \cite[Section 3, Section 4]{BrHo} come from the energy balance, which is controlled by the initial condition. In the present construction, the bound for the initial energy is replaced by the estimate \eqref{d51x} which holds true due to stationarity. Apart from that, the only difference to \cite{BrHo} is that we have to deal with path spaces containing an unbounded time interval, that is
\begin{align*}
L^q_{\text{loc}}([0,\infty);X),\quad (L^q_{\text{loc}}([0,\infty);X),w),\quad C_{\text{loc}}([0,\infty);(X,w)),
\end{align*}
where $q\in(1,\infty)$ and $X$ is a reflexive separable Banach space. Recall that $L^q_{\text{loc}}([0,\infty);X)$ is a separable metric space given by
\begin{align*}
(f,g)\mapsto \sum_{M\in\mn}2^{-M}\big(\|f-g\|_{L^q(0,M;X)}\wedge 1\big),
\end{align*}
and a set $\mathcal{K}\subset L^q_{\text{loc}}([0,\infty);X)$ is compact provided  the set
$$\mathcal{K}_M:=\{f|_{[0,M]};\,f\in\mathcal{K}\}\subset L^q(0,M;X)$$
is compact for every $M\in\mn$. On the other hand, the remaining two spaces are  (generally) nonmetrizable locally convex topological vector spaces, generated by the seminorms
$$
f\mapsto \int_0^M \langle f(t), g(t) \rangle_X\dt,\qquad M\in\mn,\,g\in L^{q'}(0,\infty; X^*),\,\tfrac{1}{q}+\tfrac{1}{q'}=1,
$$
and
$$
f\mapsto \sup_{t\in[0,M]}\langle f(t), g \rangle_X,\qquad M\in\mn,\,g\in  X^*,
$$
respectively. As above, a set $\mathcal{K}$ is compact provided it's restriction to each interval $[0,M]$ is compact in $(L^q(0,M;X),w)$ and  $C([0,M];(X,w))$, respectively.
Furthermore, it can be seen that these topological spaces belong to the class of the so-called quasi-Polish spaces, where the Jakubowski-Skorokhod theorem \cite[Theorem 2]{jakubow} applies. Indeed, in these spaces there exists a countable family of continuous functions that separate points. The proof of tightness of the corresponding laws in the current setting is therefore reduced to exactly the same method as in \cite[Section 4]{BrHo}. Note that the key estimate was \cite[(4.1)]{BrHo}, which is replaced by \eqref{b2xy}. Consequently, the passage to the limit follows the lines of \cite[Section 4]{BrHo}. In addition, Lemma \ref{lem:stac} and Lemma \ref{lem:stac2} show that this limit procedure preserves stationarity  defined in Definition \ref{D2} and Definition \ref{D1}. Hence the limit solution is stationary.
Finally, we obtain \eqref{b2a} by passing to the limit in \eqref{b4a}. The passage to the limit in the stochastic integral can be justified for instance with help of \cite[Lemma 2.1]{debussche1}.
\end{proof}

\begin{Remark}
Note that for $n=1$ the generalized energy inequality \eqref{b2a} corresponds to the usual energy inequality established in \cite{BrFeHo2015A}. The higher order version for $n\in\mn$ is new and employed in order to obtain an analog of Proposition \ref{prop:1608a} suitable for the subsequent limit procedures $\ve\to0$ and $\d\to0$ in Section \ref{L} and Section \ref{P}.
\end{Remark}

\section{Vanishing viscosity limit}
\label{L}

Our goal in this section is to perform the passage to the limit as $\varepsilon\to0$. This represents the most critical and delicate part of our construction. Remark that after completing this limit procedure we have already proved existence of stationary solutions to the stochastic Navier-Stokes system for compressible fluids -- under an additional assumption upon the adiabatic exponent $\gamma$. The last passage to the limit presented in Section \ref{P} is then devoted to weakening this additional assumption.

We point out that the key results needed for the previous limit procedure in Section \ref{d}, namely, Proposition \ref{prop:90} and consequently Proposition \ref{prop:1608a},  depend on $\varepsilon$. Furthermore, it turns out that the global in time energy estimates uniform in $\ve$ and $\d$ are very delicate. On the contrary, in the existence proof in \cite{BrHo}, the basic energy estimate \cite[Proposition 3.1]{BrHo} established on the first approximation level held true uniformly in all the approximation parameters. Consequently, no further manipulations with the energy inequality were needed.
This brought significant technical simplifications in comparison to the present construction of stationary solutions. To be more precise,
 due to the fact that already after the passage to the limit $N\to\infty$, the energy balance is violated and has to be replaced by an inequality. In other words, one cannot justify the application of  It\^o's formula anymore  and it is necessary to establish a more general version of the energy inequality, cf. \eqref{b2a}.

Recall from \cite[Section 5]{BrHo}, that in addition to the usual energy estimate \cite[(5.2)]{BrHo}, a higher integrability of the density \cite[(5.9)]{BrHo} was necessary in order to justify the compactness argument. Nevertheless, as in the deterministic setting it was not possible to obtain strong convergence of the approximate densities directly. Consequently, the identification of the limit proceeded in two steps. First, the passage to the limit in the approximate system was done but the expressions with nonlinear dependence on the density could not be identified. Second, a stochastic analog of the effective viscous flux method originally due to Lions \cite{LI4} allowed to prove strong convergence of the densities and hence to complete the proof.

Let us begin with an estimate for the velocity.

\begin{Proposition}\label{prop:1608}
Let $[\vr_{\varepsilon}, \vu_{\varepsilon}]$ be the stationary solution to \eqref{L1}--\eqref{L2} constructed in Proposition \ref{prop:1608b}. Then  for a.e. $t\in(0,\infty)$
\begin{equation} \label{d1}
\expe{
\| \vu_\varepsilon \|^2_{W^{1,2}(\mathbb T^3; \R^3)} } \leq c(G,M_0),
\end{equation}
\begin{equation} \label{d12a}
\begin{split}
&\expe{\intO{ \left[ \frac{1}{2} \vr_\varepsilon |\vu_\varepsilon|^2 + \frac{a}{\gamma - 1} \vr_\varepsilon^\gamma + \frac{\delta}{\Gamma - 1} \vr_\varepsilon^{\Gamma} \right] }
\left\| \vu_\varepsilon \right\|^2_{W^{1,2} (\mathbb T^3; \R^3)} }\\
&\qquad\leq c(G,M_0)\, \expe{ \intO{ \left[ \frac{1}{2} \vr_\varepsilon |\vu_\varepsilon|^2 + \frac{a}{\gamma - 1} \vr_\varepsilon^\gamma + \frac{\delta}{\Gamma - 1} \vr_\varepsilon^{\Gamma} \right] }  }+c(M_0).
\end{split}
\end{equation}
\end{Proposition}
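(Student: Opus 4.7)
Both estimates will be obtained by taking expectations in the generalized energy inequality (\ref{b2a}) with $n=1$ and $n=2$, respectively, and exploiting the stationarity of $[\vr_\varepsilon,\vr_\varepsilon\vu_\varepsilon]$. Set $E_\varepsilon:=\intO{\bigl[\tfrac{1}{2}\vr_\varepsilon|\vu_\varepsilon|^2+\tfrac{a}{\gamma-1}\vr_\varepsilon^\gamma+\tfrac{\delta}{\Gamma-1}\vr_\varepsilon^{\Gamma}\bigr]}$ and write $E_{\gamma,\varepsilon}$ for the analogous quantity with coefficients $a\gamma/(\gamma-1)$ and $\delta\Gamma/(\Gamma-1)$ (the integral in the second line of (\ref{b2a})). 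By Definition \ref{D2}, $\expe{E_\varepsilon^n}$ is constant in $t$ for $n=1,2$, hence for any nonnegative $\psi\in\DC((0,\infty))$ with $\int_0^\infty\psi\,\dt>0$ the term $-\int_0^\infty\partial_t\psi\,\expe{E_\varepsilon^n}\,\dt$ vanishes. The stochastic integrals in (\ref{b2a}) are martingales with zero expectation. Dividing by $\int_0^\infty\psi\,\dt$ produces a time-independent inequality valid for a.e.\ $t\in(0,\infty)$.

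For (\ref{d1}) this procedure with $n=1$ yields
\begin{align*}
&2\varepsilon\expe{E_{\gamma,\varepsilon}}+\expe{\intO{\mathbb{S}(\nabla\vu_\varepsilon):\nabla\vu_\varepsilon}} \\
&\qquad\le \tfrac{1}{2}\expe{\textstyle\sum_k\intO{\vr_\varepsilon|\vc{g}_k|^2}}+H(M_\varepsilon/M_0)\,\expe{\intO{\bigl(\tfrac{a\gamma}{\gamma-1}\vr_\varepsilon^{\gamma-1}+\tfrac{\delta\Gamma}{\Gamma-1}\vr_\varepsilon^{\Gamma-1}\bigr)}}.
\end{align*}
By (\ref{m1}) we have $\sum_k|\vc{g}_k|^2\le G$ pointwise, while Corollary \ref{cor:inv} yields $\intO{\vr_\varepsilon}=M_\varepsilon<M_0$, so the first term on the right is bounded by $\tfrac{1}{2}GM_0$. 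For the second, apply Young's inequality $\vr^{\gamma-1}\le\tilde\kappa\vr^\gamma+c(\tilde\kappa)$ and similarly for $\vr^{\Gamma-1}$; the crucial identity $H(M_\varepsilon/M_0)=2\varepsilon M_\varepsilon\le 2\varepsilon M_0$, combined with an $\varepsilon$-independent choice of $\tilde\kappa$ small, lets one absorb the resulting $\intO{\vr^\gamma}$ and $\intO{\vr^\Gamma}$ contributions into $2\varepsilon\expe{E_{\gamma,\varepsilon}}$ on the left. What remains is a constant $c(G,M_0)$; the Korn--Poincar\'e inequality (\ref{i7}) then delivers (\ref{d1}).

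For (\ref{d12a}), the $n=2$ version of the same manipulation gives
\begin{equation*}
4\varepsilon\expe{E_\varepsilon\,E_{\gamma,\varepsilon}}+2\expe{E_\varepsilon\intO{\mathbb{S}(\nabla\vu_\varepsilon):\nabla\vu_\varepsilon}}\le(\mathrm{I})+(\mathrm{II})+(\mathrm{III}),
\end{equation*}
where $(\mathrm{I})$, $(\mathrm{II})$ are the $n=2$ analogues of the two right-hand terms above and $(\mathrm{III}):=\sum_k\expe{\bigl(\intO{\vr_\varepsilon\vc{g}_k\cdot\vu_\varepsilon}\bigr)^2}$ is the new quadratic-variation correction. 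Since $E_{\gamma,\varepsilon}\ge E_\varepsilon$ and by Korn--Poincar\'e, the left-hand side dominates $4\varepsilon\expe{E_\varepsilon^2}+2c_{KP}\expe{E_\varepsilon\|\vu_\varepsilon\|^2_{W^{1,2}}}$. Term $(\mathrm{I})$ is $\le GM_0\expe{E_\varepsilon}$ as before. Term $(\mathrm{II})$ is handled exactly as in the $n=1$ case, the only modification being that the arising $\tilde\kappa\gamma\expe{E_\varepsilon^2}$-contribution is absorbed into $4\varepsilon\expe{E_\varepsilon^2}$ on the left. The key new estimate concerns $(\mathrm{III})$: using $|\vc{g}_k|\le\alpha_k$ pointwise and Cauchy--Schwarz twice,
\begin{equation*}
\Bigl(\intO{\vr_\varepsilon\vc{g}_k\cdot\vu_\varepsilon}\Bigr)^2\le\alpha_k^2\Bigl(\intO{\vr_\varepsilon|\vu_\varepsilon|}\Bigr)^2\le\alpha_k^2\Bigl(\intO{\vr_\varepsilon}\Bigr)\Bigl(\intO{\vr_\varepsilon|\vu_\varepsilon|^2}\Bigr)\le 2\alpha_k^2 M_0 E_\varepsilon,
\end{equation*}
so that $(\mathrm{III})\le 2GM_0\expe{E_\varepsilon}$. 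Collecting all bounds yields (\ref{d12a}); the additive $c(M_0)$ in the statement absorbs the Young-inequality residues.

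The principal obstacle is ensuring the final constants are independent of $\varepsilon$: the $E_\varepsilon^2$-absorbing term on the left carries only a factor of $\varepsilon$, but so does the forcing $H(M_\varepsilon/M_0)=2\varepsilon M_\varepsilon$ on the right, and the uniform bound $M_\varepsilon<M_0$ is what permits the Young-inequality parameters to be chosen independently of $\varepsilon$. The other delicate step is the linear-in-$E_\varepsilon$ estimate for the It\^o correction $(\mathrm{III})$: a direct application of (\ref{b4b}) would introduce a factor $\|\vr_\varepsilon\|_{L^\gamma}$ and produce a superlinear-in-$E_\varepsilon$ right-hand side, whereas factoring out $\intO{\vr_\varepsilon}=M_\varepsilon\le M_0$ before the Cauchy--Schwarz step yields the required clean bound.
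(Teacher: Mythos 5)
Your proof is correct and follows essentially the same route as the paper: take expectations in the generalized energy inequality \eqref{b2a} for $n=1,2$, use stationarity to kill the time-derivative term, invoke $H(M_\varepsilon/M_0)=2\varepsilon M_\varepsilon$ with $M_\varepsilon<M_0$, and absorb the pressure-like residues into the $\varepsilon$-weighted term on the left by a Young inequality with $\varepsilon$-independent parameter. Your inline derivation of the bound $\sum_k\bigl(\intO{\vr_\varepsilon\vc{g}_k\cdot\vu_\varepsilon}\bigr)^2\le 2GM_0E_\varepsilon$ is precisely the paper's estimate \eqref{b4bc}, and your observation that the earlier estimate \eqref{b4b} (with its $\|\vr_\varepsilon\|_{L^\gamma}$ factor) must be replaced here is exactly the point the paper makes when introducing \eqref{b4n}--\eqref{b4bc}.
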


\begin{proof}
Taking expectation in the energy inequality \eqref{b2a}, we observe that due to stationarity of $[\vr_{\varepsilon}, \vu_{\varepsilon}]$, the first term is constant in time, thus its time derivative vanishes. We recall that $M_\varepsilon\leq c(M_0)$ and using \eqref{m1} we estimate
\begin{align}\label{b4n}
\begin{aligned}
\sum_{k=1}^\infty\intO{ \frac{1}{\vr_\varepsilon} |\varrho_\varepsilon\vc{g}_k(\varrho_\varepsilon,\varrho_\varepsilon\vu_\varepsilon)|^2 }&\leq c(G) \int_{\mt}\varrho_\varepsilon \dx\leq c(G,M_0),
\end{aligned}
\end{align}
and
\begin{equation}\label{b4bc}
\begin{split}
\sum_{k=1}^\infty\left( \intO{ \varrho_\varepsilon\vc{g}_k(\vr_\varepsilon,\vr_\varepsilon\vu_\varepsilon) \cdot \vu_\varepsilon } \right)^2 &\leq \sum_{k=1}^\infty \left\| \sqrt{\varrho_\varepsilon}\vc{g}_k(\vr_\varepsilon,\vr_\varepsilon\vu_\varepsilon) \right\|^2_{L^2(\mathbb T^3;\R^3)} \| \sqrt{\vr_\varepsilon} \vu_\varepsilon \|^2_{L^2(\mathbb T^3; \R^3)} \\
&\leq
c(G,M_0)\intO{ \frac{1}{2} \vr_\varepsilon |\vu_\varepsilon|^2}.
\end{split}
\end{equation}
which leads to
\begin{equation} \label{d51}
\begin{split}
& 2 \ep \,\E\bigg[\bigg(\intO{ \left[ \frac{1}{2} \vr_\varepsilon |\vu_\varepsilon|^2 + \frac{a\gamma }{\gamma - 1} \vr_\varepsilon^\gamma + \frac{\delta \Gamma }{\Gamma - 1} \vr_\varepsilon^{\Gamma} \right] }\bigg)^n\bigg]\\
&\quad + \E\bigg[\left( \intO{ \left[ \frac{1}{2} \vr_{\varepsilon} |\vu_{\varepsilon}|^2 + \frac{a}{\gamma - 1} \vr_{\varepsilon}^\gamma + \frac{\delta}{\Gamma - 1} \vr_{\varepsilon}^{\Gamma} \right] } \right)^{n-1}\intO{ \mathbb{S}(\Grad \vu_\varepsilon) : \Grad \vu_\varepsilon } \bigg]\\
& \leq c(G,M_0)\,\E\bigg[\left( \intO{ \left[ \frac{1}{2} \vr_{\varepsilon} |\vu_{\varepsilon}|^2 + \frac{a}{\gamma - 1} \vr_{\varepsilon}^\gamma + \frac{\delta}{\Gamma - 1} \vr_{\varepsilon}^{\Gamma} \right] } \right)^{n-1}\bigg]\\
&\quad+ \E\bigg[\left( \intO{ \left[ \frac{1}{2} \vr_{\varepsilon} |\vu_{\varepsilon}|^2 + \frac{a}{\gamma - 1} \vr_{\varepsilon}^\gamma + \frac{\delta}{\Gamma - 1} \vr_{\varepsilon}^{\Gamma} \right] } \right)^{n-1}\\
&\qquad\qquad\times H\left( \frac{1}{M_0} \intO{ \vr_\varepsilon } \right) \intO{ \left( \frac{a \gamma}{\gamma - 1} \vr_\varepsilon^{\gamma - 1} + \frac{\delta \Gamma}{\Gamma - 1} \vr_\varepsilon^{\Gamma - 1}
 \right) } \bigg].
\end{split}
\end{equation}
Moreover, it follows as a consequence of Corollary \ref{cor:inv}  that
\[
H \left( \frac{1}{M_0} \intO{ \vr_\varepsilon } \right) = H \left( \frac{M_{\ep}}{M_0} \right) = 2 \ep M_{\ep}.
\]
Hence, setting $n=1$ and applying the Korn--Poincar\'e inequality yields \eqref{d1}, since the second term on the right hand side in \eqref{d51} can be absorbed in the first term on the left hand side. Setting $n=2$ in \eqref{d51} we deduce \eqref{d12a}.
\end{proof}

We point out that  the  corresponding bound for the energy which can be obtained from \eqref{d51}, i.e.
\begin{align*}
\ep  \expe{ \intO{ \left[ \frac{1}{2} \vr_\varepsilon |\vu_{\varepsilon}|^2 + \frac{a\gamma }{\gamma - 1} \vr_{\varepsilon}^\gamma + \frac{\delta \Gamma }{\Gamma - 1} \vr_{\varepsilon}^{\Gamma} \right] } }\leq c(G,M_0)
\end{align*}
still depends on $\varepsilon$ and is therefore not suitable for the passage to the limit $\varepsilon\to 0$. As the next step, we derive an improved estimate for the energy as well as for the pressure.

\begin{Proposition}\label{prop:1708}
Let $[\vr_\varepsilon,\vu_\varepsilon]$ be the stationary solution to \eqref{L1}--\eqref{L2} constructed in Proposition \ref{prop:1608b}. Then the following uniform bound holds true for a.e. $t\in(0,\infty)$
\begin{equation} \label{d131}
\expe{  \intO{ \left[ a \vr_\varepsilon^{\gamma +1} + \delta \vr_\varepsilon^{\Gamma + 1}   +
\frac{1}{3}\vr_\varepsilon^{2} |\vu_\varepsilon|^2 \right]} } \leq c(\delta,G,M_0).
\end{equation}
In addition, if  $s\in \big(1,\frac{\G+1}{\G-1}\wedge\frac{2(\g+1)}{\g+2}\big]$
 then  for a.e.  $T > 0$ and  $\tau>0$
\begin{equation} \label{d151}
\begin{split}
&\expe{ \left( \sup_{t \in [T, T + \tau] } \intO{ \left[ \frac{1}{2} \vr_\varepsilon |\vu_\varepsilon|^2 + \frac{a}{\gamma - 1} \vr_\varepsilon^\gamma + \frac{\delta}{\Gamma - 1} \vr_\varepsilon^{\Gamma} \right] } \right)^s }\\
&\qquad +\expe{\bigg(\int_T^{T+\tau}\|\vu_\ve\|_{W^{1,2}(\mt;\R^3)}^2\dt\bigg)^s}\leq c(\tau,\delta, M_0, G, s),
\end{split}
\end{equation}
where the constant is independent of $T$.
\end{Proposition}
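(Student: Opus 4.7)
Plan. The result splits into the pressure--improvement estimate \eqref{d131} and the moment estimate \eqref{d151}, and I would tackle them in that order.

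For \eqref{d131}, the natural route is a stochastic version of the Bogovski\u{\i} ``pressure improvement'' argument adapted to the stationary setting. Introduce the test field
$$\bfvarphi_\ve(t,x)=\mathcal{B}\big[\vr_\ve(t,\cdot)-M_\ve/|\mt|\big](x),$$
where $\mathcal{B}$ is the Bogovski\u{\i} operator on $\mt$, so that $\Div\bfvarphi_\ve=\vr_\ve-M_\ve/|\mt|$ and $\|\nabla\bfvarphi_\ve\|_{L^p(\mt)}\lesssim\|\vr_\ve\|_{L^p(\mt)}$ for $1<p<\infty$. Apply It\^o's product formula to $\int_\mt\vr_\ve\vu_\ve\cdot\bfvarphi_\ve\dx$: the momentum equation \eqref{L2} supplies $d(\vr_\ve\vu_\ve)$ together with a martingale part, while $\bfvarphi_\ve$ has deterministic (bounded variation) dynamics obtained by applying $\mathcal{B}$ to \eqref{L1}. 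After taking expectation the martingale vanishes; integrating over an arbitrary interval $[T,T+1]$ and invoking stationarity kills the boundary contribution $\E\int_\mt\vr_\ve\vu_\ve\cdot\bfvarphi_\ve\dx\big|_T^{T+1}$. The resulting identity has $\E\int_\mt(a\vr_\ve^{\g+1}+\delta\vr_\ve^{\G+1})\dx$ on the left-hand side (produced by the pressure against $\Div\bfvarphi_\ve$) together with a positive multiple of $\E\int_\mt\vr_\ve^2|\vu_\ve|^2\dx$ extracted from the convective term via the Riesz-transform structure of $\nabla\bfvarphi_\ve$; all remaining terms reduce to quantities controlled by \eqref{d1}, the constant mass bound $M_\ve\leq c(M_0)$, and the noise growth hypothesis \eqref{m1}. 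Standard Young absorption then yields \eqref{d131}.

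For \eqref{d151}, I would start from the generalised energy inequality \eqref{b2a} with exponent~$s$ (the non-integer case being obtained by applying It\^o's formula to the regularised power $E\mapsto(\kappa+E)^s$ on the semimartingale underlying \eqref{b2a}, and then letting $\kappa\downarrow 0$). Localise with $\psi$ supported in $[T,T+\tau]$; the dissipation term $n\psi E^{s-1}\int_\mt\mathbb{S}(\nabla\vu_\ve):\nabla\vu_\ve\dx$ together with Korn--Poincar\'e \eqref{i7} produces the $W^{1,2}$-integral part of \eqref{d151}, and a Burkholder--Davis--Gundy estimate on the martingale term, whose quadratic variation is controlled by \eqref{b4bc} in the form $c(G,M_0)E^{s-1}\int_\mt\vr_\ve|\vu_\ve|^2\dx\leq cE^s$, delivers the supremum part after a further Young absorption. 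The starting value $\E E^s(T)$ is constant in $T$ by stationarity, and is bounded via \eqref{d131}: the conditions $s\leq(\G+1)/(\G-1)$ and $s\leq 2(\g+1)/(\g+2)$ are exactly what is needed so that Jensen's inequality together with the mass constraint gives $\E\big(\int_\mt\vr_\ve^\G\dx\big)^s\leq c\,\E\int_\mt\vr_\ve^{\G+1}\dx$ and, similarly, that $\E\big(\int_\mt\vr_\ve|\vu_\ve|^2\dx\big)^s$ is dominated by $\E\int_\mt\vr_\ve^2|\vu_\ve|^2\dx$ after pairing with $\|\vr_\ve\|_{L^\g}$.

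The main obstacle is the Bogovski\u{\i} step behind \eqref{d131}. Computing $d\bfvarphi_\ve$ through \eqref{L1} brings in all the $\ve$-regularisation contributions ($\ve\Delta\vr$, the damping $-2\ve\vr$, and the source $2\ve M_\ve/|\mt|$); each of them must be routed through $\mathcal{B}$ and estimated in the right $L^p$-norm so that the final constant depends on $\delta$ (and on $G,M_0$) but is genuinely uniform in~$\ve$. The convective piece also has to be decomposed carefully into one part closing with the pressure on the left-hand side and a remainder absorbable via Young's inequality using only quantities already controlled by \eqref{d1}.
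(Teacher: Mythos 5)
Your plan for the pressure bound \eqref{d131} is essentially the paper's: the paper tests the momentum equation with $\Grad\Delta_x^{-1}[\vr_\ve-M_\ve]$, applies It\^o to $f(\rho,\bfq)=\int_{\mt}\bfq\cdot\Delta_x^{-1}\nabla_x\rho\dx$, and uses stationarity to kill the boundary term over $[T,T+1]$. Your operator $\mathcal{B}$ is the same object once one takes the torus realisation $\mathcal{B}=\Grad\Delta_x^{-1}$; the extraction of $\frac{1}{3}\vr_\ve^2|\vu_\ve|^2$ relies only on $\operatorname{tr}(\nabla\bfvarphi_\ve)=\Div\bfvarphi_\ve=\vr_\ve-M_\ve$, which any inverse divergence has, so your wording ``Riesz-transform structure'' is slightly misleading but the mechanism is the right one. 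The one thing you should spell out (and which you correctly flag as the obstacle) is the bookkeeping of the $\ve$-terms coming from $\mathrm{d}\bfvarphi_\ve$, but that is indeed the routine part.

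Your route to \eqref{d151} has a genuine gap. You propose to obtain an $s$-power version of the energy inequality \eqref{b2a} by ``applying It\^o's formula to the regularised power $E\mapsto(\kappa+E)^s$ on the semimartingale underlying \eqref{b2a}.'' But at the level of $[\vr_\ve,\vu_\ve]$ there is no such semimartingale: after the Galerkin limit $N\to\infty$ the energy balance degenerates into an \emph{inequality}, and the paper warns explicitly that ``one cannot justify the application of It\^o's formula anymore,'' which is precisely why the higher-power inequality \eqref{b2a} was proved at the finite-dimensional level and then passed to the limit for \emph{integer} $n$. A second problem is that even if you had the $s$-power inequality, its dissipation term is $\int E^{s-1}\int\mathbb S(\nabla\vu_\ve)\!:\!\nabla\vu_\ve$, and from this one cannot directly extract $\expe{(\int_T^{T+\tau}\|\vu_\ve\|_{W^{1,2}}^2\,\dt)^s}$. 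The paper sidesteps both issues by using \eqref{b2a} only for $n=1$, which after localising in $[T,t]$ is a \emph{pathwise} inequality with $\int_T^{T+\tau}\|\vu_\ve\|_{W^{1,2}}^2$ appearing linearly; one then takes $\sup_t$, raises the entire inequality to the power $s$ (a pointwise operation requiring no It\^o), takes expectation, and uses Burkholder--Davis--Gundy. That is where the thresholds $s\le\frac{2(\g+1)}{\g+2}$ (for the quadratic-variation term, using \eqref{d141} and \eqref{d131}) and $s\le\frac{\G+1}{\G-1}$ (for the $\vr_\ve^{\g-1}+\vr_\ve^{\G-1}$ drift) enter; your reading of those thresholds is roughly right but attached to the wrong mechanism.
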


\begin{proof}
Our goal is to use
the quantity
$\Grad\Delta_x^{-1} \left[ \vr_\varepsilon - M_\varepsilon \right]$
as test functions in the momentum equation,
where $\Delta_x$ is the periodic Laplacian. We apply It\^{o}'s formula to the functional $f(\rho,\bfq)=\int_{\mt} \bfq\cdot\Delta_x^{-1}\nabla_x\rho\dx.$
This step can be made rigorous by mollifying the equation, see \cite[Section 5]{BrHo}.
After a rather tedious but straightforward manipulations,
we deduce from (\ref{L1}) and \eqref{L2} that
\begin{equation} \label{d41}
\begin{split}
\int_T^{T+1} &\intO{ \left( a \vr_\varepsilon^{\gamma + 1} + \delta \vr_\varepsilon^{\Gamma + 1} \right) } \ \dt + \int_T^{T+1} \intO{ \frac{1}{3} \vr_\varepsilon^{2} |\vu_\varepsilon|^2 } \ \dt \\
&=
M_\varepsilon \int_T^{T+1}  \intO{ \left( a \vr_\varepsilon^\gamma +
\delta \vr_\varepsilon^\Gamma \right) }   \dt + \frac{1}{3} M_\varepsilon \int_T^{T+1} \intO{ \vr_\varepsilon |\vu_\varepsilon|^2 } \dt\\
 &\qquad+
\int_T^{T+1} \intO{ \left(\frac{4}{3} \mu+\eta \right) \Div \vu_\varepsilon \ \vr_\varepsilon } \dt \\
&\qquad+  2\ep \int_T^{T+1} \intO{ \vre \vue \cdot \Grad \Delta_x^{-1} \left[
\vr_\varepsilon -M_\varepsilon \right] } \dt +\ep \int_T^{T+1} \intO{ \vre^2 \Div \vue } \dt \\
&\qquad - \int_T^{T+1} \intO{ \left( \vr_\varepsilon \vu_\varepsilon \otimes \vu_\varepsilon - \frac{1}{3} \vre |\vu_\varepsilon |^2 \mathbb{I} \right) : \Grad \Delta^{-1}_x
\Grad  \vr_\varepsilon } \dt \\
 &\qquad+
\left[ \intO{ \vr_\varepsilon \vu_\varepsilon \cdot \Grad \Delta^{-1}_x \left[ \vr_\varepsilon - M_\varepsilon \right] } \right]_{t = T}^{t = T+1} \\
&\qquad+ \int_T^{T+1} \intO{ \vr_\varepsilon \vu_\varepsilon \cdot \Grad \Delta_x^{-1} \left[ \Div (\vr_\varepsilon \vu_\varepsilon )  \right] } \dt  \\
&\qquad- \sum_{k=1}^\infty\int_T^{T+1} \intO{ \mathbf{G}_k(\vre,\vre\vue) \cdot \Grad \Delta_x^{-1} \left[ \vr_\varepsilon - M_\varepsilon\right] } \ \D W_k.
\end{split}
\end{equation}
Note that in the above the second term on the left hand side, the second term on the right hand side and the second summand on the fifth line were added artificially and they cancel out. Passing to expectations in (\ref{d41}) and keeping in mind that the processes are stationary we deduce
\begin{equation*}% \label{d5}
\begin{split}
&\expe{  \intO{ \left[ a \vr_\varepsilon^{\gamma +1} + \delta \vr_\varepsilon^{\Gamma + 1}   +
\frac{1}{3}\vr_\varepsilon^{2} |\vu_\varepsilon|^2 \right]} }\\
& \leq
c(M_0) \,\expe{  \intO{ \Big( \frac{1}{2} \vr_\varepsilon |\vu_\varepsilon|^2 + \frac{a}{\gamma - 1} \vr_\varepsilon^\gamma + \frac{\delta}{\Gamma - 1} \vr_\varepsilon^\Gamma \Big) } } \\
&\quad - \expe{  \intO{ \Big( \vr_\varepsilon \vu_\varepsilon \otimes \vu_\varepsilon - \frac{1}{3} \vr_\varepsilon |\vu_\varepsilon |^2 \mathbb{I} \Big) : \Grad \Delta^{-1}_x \Grad \vr_\varepsilon  } } \\
&\quad+
\expe{  \intO{ \Big(\frac{4}{3}\mu+\eta \Big) \Div \vu_\varepsilon \ \vr_\varepsilon  }\dt }+ \expe{ \int_T^{T+1} \intO{ \vr_\varepsilon \vu_\varepsilon \cdot \Grad \Delta^{-1}_x  \Div (\vr_\varepsilon \vu_\varepsilon )  }}\\
&\quad+ 2 \ep \,\expe{ \intO{ \vre \vue \cdot \Grad \Delta_x^{-1} \left[
\vr_\varepsilon -M_\varepsilon \right] } } +\ep\,\expe{ \intO{ \vre^2 \Div\vue } }\\
&=:(I)+(II)+(III)+(IV)+(V)+(VI).
\end{split}
\end{equation*}
Now, we estimate each term separately. By Young's inequality we obtain for every $\kappa>0$
\begin{align*}
(I)&\leq\,\kappa\, \expe{ \intO{ \Big( \frac{1}{3} \vr_\varepsilon^2 |\vu_\varepsilon|^2 + a\vr_\varepsilon^{\gamma+1} + \delta\vr_\varepsilon^{\Gamma+1} \Big) } }
+c(\kappa,M_0)\, \expe{  \int_{\mathbb T^3}\Big( |\vu_\varepsilon|^2 + 1 \Big) \dx }\\
&\leq\,\kappa\, \expe{  \intO{ \Big( \frac{1}{3} \vr_\varepsilon^2 |\vu_\varepsilon|^2 + a\vr_\varepsilon^{\gamma+1} + \delta\vr_\varepsilon^{\Gamma+1} \Big) } }+c(\kappa,G,M_0),
\end{align*}
using the uniform bound (\ref{d1}).
In order to control the remaining integrals on the right hand side, we first use H\" older's inequality to obtain
\begin{equation*}% \label{d7}
\begin{split}
(II)
&\leq \,c\, \expe{\| \sqrt{\vr_\varepsilon} \vu_\varepsilon \|_{L^2(\mathbb T^3; \R^3)} \| \vu_\varepsilon \|_{L^6(\mathbb T^3; \R^3)} \left\| \sqrt{\vr_\ve} \,\Grad \Delta_x^{-1} \Grad \vr_\varepsilon \right\|_{L^3(\mathbb T^3; \R^{3\times 3})}} \\
&\leq c
\left( \expe{\| \sqrt{\vr_\varepsilon} \vu_\varepsilon \|^2_{L^2(\mathbb T^3; \R^3)} \| \vu_\varepsilon \|^2_{W^{1,2}(\mathbb T^3; \R^3)}} +  \expe{\left\| \sqrt{\vr_\varepsilon}\, \Grad \Delta^{-1}_x \Grad \vr_\varepsilon \right\|^2_{L^{3 }(\mathbb T^3; \R^{3\times 3})}}  \right).
\end{split}
\end{equation*}
Furthermore, since $\Gamma\geq9/2$, we have
\begin{align*}
&\expe{\left\| {\vr_\varepsilon}^{1/2} \Grad \Delta^{-1}_x \Grad\vr_\varepsilon \right\|^2_{L^3(\mathbb T^3; \R^{3 \times 3})}}
\leq \expe{\left\| \vr_\varepsilon \right\|_{L^{\frac{9}{2}}(\mathbb T^3)} \left\| \Grad \Delta^{-1}_x \Grad \vr_\varepsilon \right\|^2_{L^\frac{9}{2}(\mathbb T^3; \R^{3 \times 3})}}\\
&\leq\,c\,\expe{\left\| \vr_\varepsilon \right\|^3_{L^{\frac{9}{2}}(\mathbb T^3)}}\dt\leq\,c\,\mathbb E \bigg[\left\| \vr_\varepsilon \right\|^3_{L^{\Gamma}(\mathbb T^3)}\bigg]\leq\,\kappa\delta\expe{\left\| \vr_\varepsilon \right\|^\Gamma_{L^{\Gamma}(\mathbb T^3)}}+c(\kappa,\delta).
\end{align*}
Note that we also used the continuity of $\Grad \Delta^{-1}_x \Grad$
and Young's inequality.
Similarly, we can estimate
\begin{align*}
(IV) &\leq\expe{
\|\vu_\varepsilon \|_{L^6(\mathbb T^3; \R^3)} \| \vr_\varepsilon \|_{L^{3}(\mathbb T^3)} \left\| \Grad \Delta^{-1}_x \Div \left( \vr_\varepsilon\vu_\varepsilon  \right)
\right\|_{L^2(\mathbb T^3; \R^{3 \times 3})}} \\
&\leq\,c\,\expe{
\|\vu_\varepsilon \|_{L^6(\mathbb T^3; \R^3)} \| \vr_\varepsilon \|_{L^{3}(\mathbb T^3)} \left\|\vr_\varepsilon\vu_\varepsilon
\right\|_{L^2(\mathbb T^3; \R^{3})}}\\
&\leq\,c\,\expe{
\|\vu_\varepsilon \|_{W^{1,2}(\mathbb T^3; \R^3)}^2 \| \vr_\varepsilon \|_{L^{3}(\mathbb T^3; \R^3)}^2 }\\
&\leq\,c\,\expe{
\|\vu_\varepsilon \|_{W^{1,2}(\mathbb T^3; \R^3)}^2 \| \vr_\varepsilon \|_{L^{\Gamma}(\mathbb T^3; \R^3)}^\Gamma}+c(G,M_0)
\end{align*}
using (\ref{d1}). We also have that
\begin{align*}
(III)&\leq\,\kappa\delta\,\expe{\|\varrho_\varepsilon\|_{L^2(\mt)}^2}+c(\kappa,\delta)\expe{\|\nabla\bfu_\varepsilon\|^2_{L^2(\mt;\R^3)}}\\
&\leq\,\kappa\delta\,\expe{\|\varrho_\varepsilon\|_{L^\G(\mt)}^\Gamma}+c(\kappa,\delta,G,M_0)
\end{align*}
as well as
\begin{align*}
(VI)&\leq\,\kappa\delta\,\E\|\varrho_\varepsilon\|_{L^\G(\mt)}^\Gamma+c(\kappa,\delta,G,M_0).
\end{align*}
Finally, continuity of $\nabla_x\Delta_x^{-1}$ and (\ref{d1}) imply
\begin{align*}
(V)&\leq\,\kappa\delta\,\bigg(\E\|\varrho_\varepsilon\|_{L^4(\mt)}^4+\E\|\nabla_x\Delta_x^{-1}[\varrho_\varepsilon-M_\varepsilon]\|_{L^4(\mt)}^4\dx\bigg)+c(\kappa,\delta)\|\bfu_\varepsilon\|_{L^2(\mt;\R^3)}^2\\
&\leq\,c\,\kappa\delta\,\E\|\varrho_\varepsilon\|_{L^\G(\mt)}^\Gamma+c(\kappa,\delta,G,M_0)
\end{align*}
Summing up the inequalities above, choosing $\kappa$ small enough and using stationarity, we obtain
\begin{equation*}% \label{d11a}
\begin{split}
\mathbb E\bigg[& \intO{ a \vr_\varepsilon^{\gamma + 1}  + \delta \vr_\varepsilon^{\Gamma +1}} + \intO{ \frac{1}{3} \vr_\varepsilon^{2} |\vu_\varepsilon|^2 } \bigg] \\
&\leq \expe{ \intO{ \left[ \frac{1}{2} \vr_\varepsilon |\vu_\varepsilon|^2 + \frac{a}{\gamma - 1} \vr_\varepsilon^\gamma + \frac{\delta}{\Gamma - 1} \vr_\varepsilon^\Gamma  \right] } \,\| \vu_\varepsilon \|_{W^{1,2}(\mt;\R^3)}^2  } + c(\delta,G,M_0).
\end{split}
\end{equation*}
Thus, due to \eqref{d12a} and  Young's inequality, we may conclude that the stationary solution $[\vr_\ve,\vu_\ve]$  admits the uniform bound \eqref{d131} as well as
\begin{equation} \label{d141}
\expe{ \left(1 + \intO{ \left[ \frac{1}{2} \vr_\varepsilon |\vu_\varepsilon|^2 + \frac{a}{\gamma - 1} \vr_\varepsilon^\gamma + \frac{\delta}{\Gamma - 1} \vr_\varepsilon^\Gamma \right] } \right) \| \vu_\varepsilon \|^2_{W^{1,2} (\mathbb T^3; \R^3)} } \leq c(\delta,G,M_0).
\end{equation}

Finally, let us show \eqref{d151}.
To this end, we may go back to the energy inequality (\ref{b2a}) for $n=1$, obtaining
\[
\begin{split}
&\expe{ \left( \sup_{t \in [T, T + \tau] } \intO{ \left[ \frac{1}{2} \vr_\varepsilon |\vu_\varepsilon|^2 + \frac{a}{\gamma - 1} \vr_\varepsilon^\gamma + \frac{\delta}{\Gamma - 1} \vr_\varepsilon^{\Gamma} \right] } \right)^s }
+\expe{\bigg(\int_T^{T+\tau}\|\vu_\ve\|_{W^{1,2}(\mt;\R^3)}^2\dt\bigg)^s}\\
 &\qquad\leq c(s)\,\expe{ \sup_{t \in [T, T + \tau] } \left|\sum_{k=1}^\infty\int_{T}^{t} \intO{ \vr_\ve\,\mathbf{g}_k(\vr_\ve,\vr_\ve\vu_\ve) \cdot \vu_\varepsilon } \,\D W_k \right|^s }\\
 &\qquad\quad +c(s)\,\expe{\sup_{t \in [T, T + \tau] } \bigg|\sum_{k=1}^\infty\int_T^{t}\int_{\mt}\frac{1}{\varrho_\varepsilon}|{\varrho_\varepsilon}\mathbf g_k({\varrho_\varepsilon},{\varrho_\varepsilon}\vu_\varepsilon)|^2\,\dif x\,\dif r\bigg|^s}\\
 &\qquad\quad+ c(s)\,\expe{\sup_{t \in [T, T + \tau] } \bigg|\int_T^{t} \int_{\mt}\vr_\varepsilon^{\gamma-1}+\vr_\varepsilon^{\Gamma-1}\,\dif x\,\dif r\bigg|^s}.
\end{split}
\]
The first term on the right hand side is estimated using the Burkholder-Davis-Gundy inequality and  (\ref{b4bc}); the second term using \eqref{b4n}. We deduce that
\[
\begin{split}
&\expe{ \left( \sup_{t \in [T, T + \tau] } \intO{ \left[ \frac{1}{2} \vr_\varepsilon |\vu_\varepsilon|^2 + \frac{a}{\gamma - 1} \vr_\varepsilon^\gamma + \frac{\delta}{\Gamma - 1} \vr_\varepsilon^{\Gamma} \right] } \right)^s } \\
& \quad\leq c(s,\tau,M_0,G)\left(1+\E\bigg[\bigg(\int_T^{T+\tau}\int_{\mt}\vr_\varepsilon|\vu_\varepsilon|^2\,\dif x\,\dif r\bigg)^\frac{s}{2}\bigg] + \expe{ \bigg|\int_T^{T+\tau} \int_{\mt}\vr_\varepsilon^{\gamma-1}+\vr_\varepsilon^{\Gamma-1}\,\dif x\,\dif r\bigg|^s}\right).
\end{split}
\]
Now, by H\" older's inequality, stationarity, \eqref{d141} and \eqref{d131}, for $s\in (1,2)$,
\begin{equation}\label{e4}
\begin{split}
&\E\left( \int_T^{T+\tau}\intO{ \vr_\varepsilon |\vu_\varepsilon |^2 } \,\dif r\right)^\frac{s}{2}\\
&\qquad\leq c(\tau,s)\,\E\bigg(\int_T^{T+\tau}\left( \| \sqrt{\vr_\varepsilon} \vu_\varepsilon \|_{L^2(\mathbb T^3; \R^3)} \| \vu_\varepsilon \|_{L^6(\mathbb T^3; \R^3)} \right)^s \| \sqrt{\vr_\ve} \|_{L^{3}(\mathbb T^3)}^s\dt\bigg)^\frac{1}{2}\\
&\qquad\leq c(\tau,s)\, \E\bigg(\int_T^{T+\tau} \| \sqrt{\vr_\varepsilon} \vu_\varepsilon \|_{L^2(\mathbb T^3; \R^3)}^2 \| \vu_\varepsilon \|_{L^6(\mathbb T^3; \R^3)}^2+\|\sqrt{\vr_\ve}\|_{L^3(\mt)}^\frac{2s}{2-s}\dt\bigg)^\frac{1}{2}\\
&\qquad\leq c(\tau,s) \left(\expe{ \| \sqrt{\vr_\varepsilon} \vu_\varepsilon \|^2_{L^2(\mathbb T^3; \R^3)} \| \vu_\varepsilon \|_{W^{1,2}(\mathbb T^3; \R^3)}^2 }+ \E\|\vr_\varepsilon\|_{L^\g(\mt)}^{\frac{s}{2 -{s}}}\right)\\
&\qquad\leq c(\tau,s,\d,G,M_0)\left(1+ \E\left( \intO{ \vr_\varepsilon^{\gamma+1} } \right)^{\frac{s}{(\gamma+1)( 2 -{s})}} \right)\\
&\qquad\leq c(\tau,s,\d,G,M_0)\left(1+ \E\intO{ \vr_\varepsilon^{\gamma+1} }  \right)\leq c(\tau,s,\d,G,M_0)
\end{split}
\end{equation}
provided $s\leq \tfrac{2(\gamma+1)}{\gamma+2}$. Similarly,
\[
\bigg(\int_{\mt}\vr_\varepsilon^{\gamma-1}+\vr_\varepsilon^{\Gamma-1}\,\dif x\bigg)^s\leq 1+\int_{\mt}\vr_\varepsilon^{\gamma+1}+\vr_\varepsilon^{\Gamma+1}\,\dif x,
\]
provided $s\leq\tfrac{\Gamma+1}{\Gamma-1}$. Consequently, \eqref{d151} follows due to \eqref{d131}.
\end{proof}

With Proposition \ref{prop:1608} and Proposition \ref{prop:1708} at hand, we are able to follow the compactness argument of  \cite[Section 5.1]{BrHo}.
To be more precise, as $\varepsilon\rightarrow0$ we aim at constructing stationary solutions to the following system.
\begin{itemize}
\item {\bf Equation of continuity.}
\begin{equation} \label{L1'}
\begin{split}
\int_{0}^{\infty} \intO{ \left[ \vr \partial_t \varphi + \vr \vu \cdot \Grad \varphi \right] } \dt  =0
\end{split}
\end{equation}
for any $\varphi \in \DC((0, \infty) \times \mt)$ $\prst$-a.s.
\item {\bf Regularized momentum equation.}
\begin{equation} \label{L2'}
\begin{split}
\int_0^\infty \partial_t \psi\intO{  \vr \vu \cdot \varphi} \dt  &+ \int_0^\infty \psi\intO{  \vr \vu \otimes \vu : \Grad \varphi } \dt
+ \int_0^\infty \psi\intO{ (a  \vr^\gamma+\delta\varrho^\Gamma)  \Div \varphi } \dt  \\
&-   \int_0^\infty \psi\intO{  \mathbb{S}(\Grad \vu) : \Grad \varphi} \dt
=-\sum_{k=1}^\infty \int_0^\infty \psi \intO{ \mathbf{G}_k(\vr,\vr\vu) \cdot \varphi } \ \D W_k
\end{split}
\end{equation}
for any $\psi \in \DC((0, \infty))$, $\varphi \in C^\infty(\mt; \R^3)$ $\prst$-a.s.

\end{itemize}

Note that unlike the energy estimate in \cite{BrHo}, the bound \eqref{d151} only gives limited moment estimates, i.e. $s$ cannot be arbitrarily large. Nevertheless, \eqref{d151} is sufficient to perform the passage to the limit. We also point out that the assumption \eqref{m1} on the noise coefficients is actually stronger than the one in \cite{BrHo}, and consequently the convergence of the stochastic integral is more straightforward.

We deduce the following.

\begin{Proposition}\label{prop:1608b'}
Let $\delta>0$ be given. Then there exists a stationary solution $[\varrho_\delta,\bfu_\delta]$ to \eqref{L1'}--\eqref{L2'}. Moreover, we have the estimates
\begin{equation} \label{d1'}
\expe{
\| \vu_\delta(t) \|^2_{W^{1,2}(\mathbb T^3; \R^3)} } \leq c(G,M_0),
\end{equation}
and
\begin{equation} \label{d51'}
\begin{split}
&\expe{ \left( \intO{ \left[ \frac{1}{2} \vr_{\delta} |\vu_{\delta}|^2 + \frac{a}{\gamma - 1} \vr_{\delta}^\gamma + \frac{\delta}{\Gamma - 1} \vr_{\delta}^{\Gamma} \right] } \right)
 \intO{ \| \vu_{\delta}\|^2_{W^{1,2}(\mt;\R^3)} } }\\
&\leq c(G,M_0)
\expe{
\intO{ \left[ \frac{1}{2} \vr_{\delta} |\vu_{\delta}|^2 + \frac{a}{\gamma - 1} \vr_{\delta}^\gamma + \frac{\delta}{\Gamma - 1} \vr_{\delta}^{\Gamma} \right] } } + c(M_0),
\end{split}
\end{equation}
for a.e. $t\in(0,\infty)$.
In addition, the equation of continuity \eqref{L1'} holds true in the renormalized sense and for all $\psi\in C^\infty_c ((0,\infty))$, $\psi\geq0$, the following  energy inequality holds true
\begin{align} \label{b2ac}
\begin{aligned}
&-\int_0^\infty\partial_t\psi \left( \intO{ \left[ \frac{1}{2} \vr_\d |\vu_\d|^2 + \frac{a}{\gamma - 1} \vr_\d^\gamma + \frac{\delta}{\Gamma - 1} \vr_\d^{\Gamma} \right] } \right)\dt\\
&\quad+  \int_0^\infty\psi
 \intO{ \mathbb{S}(\Grad \vu_\d) : \Grad \vu_\d } \dt \\
& \leq \sum_{k=1}^\infty \int_0^\infty\psi \intO{ \varrho_\d\,\bfg_k(\varrho_\d,\vr_\d\vu_\d) \cdot \vu_\d } \, \D W_k \\
&\quad+ \frac{1}{2} \int_0^\infty\psi
\sum_{k=1}^\infty\intO{ \frac{1}{\vr_\d} |\varrho_\d\,\vc{g}_k(\varrho_\d,\vr_\d\vu_\d)|^2 } \dt.
\end{aligned}
\end{align}
\end{Proposition}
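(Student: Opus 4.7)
The plan is to pass to the limit $\varepsilon\to 0$ in the approximate stationary solutions $[\vr_\ve,\vu_\ve]$ from Proposition \ref{prop:1608b}. The overall strategy parallels \cite[Section 5]{BrHo}, but two features distinguish the present setting from the initial--value case: every $\ve$-uniform bound has to be read off from Propositions \ref{prop:1608} and \ref{prop:1708}, which rely essentially on stationarity and give only limited moment integrability (cf.~\eqref{d151}), and the usual device of starting from a compact sequence of initial densities is unavailable.

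First, by combining \eqref{d131}, \eqref{d151} with the equations \eqref{L1}--\eqref{L2} to control the time regularity of $\vr_\ve$ and $\vr_\ve\vu_\ve$ in negative Sobolev norms, I would establish tightness of the joint laws of $[\vr_\ve,\vu_\ve,W]$ on the quasi-Polish product space
\begin{equation*}
\big(L^{\gamma+1}_{\rm loc}([0,\infty);L^{\gamma+1}(\mt)),w\big)\times \big(L^2_{\rm loc}([0,\infty);W^{1,2}(\mt;\R^3)),w\big)\times C_{\rm loc}([0,\infty);\mathfrak{U}_0),
\end{equation*}
enhanced by auxiliary weak-continuity spaces $C_{\rm loc}([0,\infty);W^{-k,2}(\mt))$ carrying $\vr_\ve$ and $\vr_\ve\vu_\ve$. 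Jakubowski's version of the Skorokhod theorem \cite[Theorem 2]{jakubow}, invoked as in \cite[Section 4]{BrHo}, then produces a new stochastic basis on which the relabelled sequence converges $\prst$-a.s.\ to a limit $[\vr,\vu,W]$. Stationarity of the limit is preserved by Lemma \ref{lem:stac} and Lemma \ref{lem:stac2}.

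The main obstacle will be the identification of the weak pressure limit $\overline{a\vr^\gamma+\delta\vr^\Gamma}$, which demands strong a.e.\ convergence $\vr_\ve\to\vr$. I would adapt the stochastic effective viscous flux technique from \cite[Section 5.2]{BrHo}, applied via It\^o's formula to the functional $\int_\mt \vr_\ve\vu_\ve\cdot\nabla\Delta^{-1}(\vr_\ve-M_\ve)\dx$ and its limit counterpart. The difficulty is that, in contrast with the initial--value problem, no compactness of the density at $t=0$ is at our disposal to initialise the oscillation-defect argument. Following the deterministic strategy of \cite{EF56}, I would instead exploit stationarity: for every renormalising $b$ with $b'$ of compact support, $\E\int_\mt b(\vr_\ve)\dx$ and $\E\int_\mt b(\vr)\dx$ are time-independent, so averaging the effective viscous flux identity over a time interval $[T,T+\tau]$ and letting $\tau\to 0$ (or, alternatively, comparing two time slices) forces the oscillation defect to vanish in expectation. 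This yields $\vr_\ve\to\vr$ in $L^1_{\rm loc}([0,\infty)\times\mt)$ $\prst$-a.s., identifies the pressure and, via DiPerna--Lions theory, the renormalised continuity equation for $[\vr,\vu]$.

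The remaining passages are routine. The artificial viscosity and damping terms in \eqref{L1}--\eqref{L2} all carry an explicit factor $\varepsilon$ and vanish thanks to the $\ve$-uniform bounds on $\vu_\ve$ and $\vr_\ve$. Hypothesis \eqref{m1} together with the $\prst$-a.s.\ convergence of $[\vr_\ve,\vr_\ve\vu_\ve]$ yields convergence of the stochastic integral by, e.g., \cite[Lemma 2.1]{debussche1}. The convective term is handled by strong compactness of the momentum in a negative Sobolev space combined with the weak convergence of $\vu_\ve$. The bounds \eqref{d1'} and \eqref{d51'} follow from \eqref{d1} and \eqref{d12a} by weak lower semicontinuity and Fatou's lemma, while the energy inequality \eqref{b2ac} is obtained by passing to the limit in the $n=1$ instance of \eqref{b2a}: lower semicontinuity of the quadratic terms in $\vu$, convergence of the stochastic integral as above, and convergence of the deterministic pressure-like contributions via the strong convergence of $\vr_\ve$.
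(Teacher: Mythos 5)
Your proposal follows essentially the same route as the paper: tightness on local-in-time quasi-Polish path spaces, Jakubowski--Skorokhod, passage to the limit with weak pressure, then the stochastic effective-viscous-flux identity obtained from It\^o's formula applied to $\int_\mt \vr_\ve\vu_\ve\cdot\nabla\Delta_x^{-1}[\vr_\ve-M_\ve]\,\dx$, with stationarity used to annihilate the boundary-in-time terms and deduce strong convergence of $\vr_\ve$, followed by lower semicontinuity for \eqref{d1'}, \eqref{d51'} and \eqref{b2ac}. Two small imprecisions worth flagging: (i) the mechanism is not "letting $\tau\to0$" --- your alternative, comparing the two time slices $t=T$ and $t=T+\tau$ and observing that expectations of the boundary terms $[\,\cdot\,]_{t=T}^{t=T+\tau}$ vanish by time-invariance of the law, is what the paper actually exploits (with $\tau=1$), while the inequality sign comes from dropping the nonnegative dissipation term $\ve\int|\nabla\vr_\ve|^2/\vr_\ve$; (ii) the crucial renormalisation uses $b(z)=z\log z$, whose derivative does not have compact support, so the class of admissible $b$ has to be enlarged (the paper invokes the extension from \cite[Remark 1.1]{feireisl1}) --- this step, turning $(\tfrac43\mu+\eta)\int\Div\vu_\ve\,\vr_\ve$ into a boundary term plus a signed $\ve$-dissipation, is the heart of the argument and deserves to be stated explicitly. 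Neither point is a genuine gap; they are matters of precision in an otherwise correct replication of the paper's strategy.
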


\begin{proof}
First, we proceed as in \cite[Section 5.1]{BrHo} and establish the necessary tightness of the joint law of $[\vr_\ve,\vr_\ve\vue,\vue,W]$. The only difference is that the corresponding path spaces are replaced by their local-in-time analogs as discussed in the proof of Proposition \ref{prop:1608b}. Consequently, the Jakubowski-Skorokhod theorem applies and we obtain a new family of martingale solutions, still denoted by
$[\vr_\ve,\vr_\ve\vue,\vue,W]$, obeying the same laws and converging in probability with respect to a new basis, still denoted by
$\bas$.
In addition, the limit satisfies
\begin{equation} \label{L3}
\int_{0}^{\infty} \intO{ \left[ \vr \partial_t \varphi + \vr \vu \cdot \Grad \varphi \right] } \dt = 0, \qquad \intO{ \vr } = M_0,
\end{equation}
for any $\varphi \in \DC((0, \infty) \times \mt)$ $\prst$-a.s.,
\begin{equation} \label{L4}
\begin{split}
\int_0^\infty \partial_t \psi \intO{ \vr \vu \cdot \varphi} \dt  &+ \int_0^\infty\psi \intO{  \vr \vu \otimes \vu : \Grad \varphi } \dt
+ \int_0^\infty \psi\intO{ \big(a  \Ov{\vr^\gamma} +\d \Ov{\vr^\G}\big) \Div \varphi } \dt  \\
&-   \int_0^\infty \psi\intO{  \mathbb{S}(\Grad \vu) : \Grad \varphi} \dt  =- \int_0^\infty \psi  \D M_\varphi
\end{split}
\end{equation}
for any $\psi \in \DC((0, \infty))$, $\varphi \in C^\infty(\mt; \R^3)$ $\prst$-a.s. Here $M_\varphi$ is a square integrable martingale and the bars denote the corresponding weak limits with respect to $t,x$. For details, we refer to \cite[Proposition 5.6]{BrHo}.
In addition, $\varrho$ satisfies the renormalized equation of continuity. That is
\begin{align}\label{eq:rendelta}
\partial_t b({\varrho})&+\Div\big(b({\varrho}){\bfu}\big)
+\big(b'({\varrho}){\varrho}-b({\varrho})\big)\Div{\bfu}
=0
\end{align}
in the sense of distribution on $(0,\infty)\times\mt$ for every $b\in C^1([0,\infty))$ with $b'(z)=0$ for $z\geq M_b$ for some constant $M_b>0$. However, as discussed in \cite[Remark 1.1]{feireisl1}, the assumption on $b'$ can be weakened to
$$|b'(z)z|\leq c(z^\theta+z^\frac{\g}{2})\quad\text{for all } z>0 \text{ and some }\theta\in(0,\tfrac{\g}{2}).$$
This in  particular includes the function $b(z)=z\log z$ employed below.

In order to complete the proof, it is enough to show strong convergence of the densities as in \cite[Section 5.2]{BrHo}. More specifically, we prove that 
\begin{equation} \label{L5}
\limsup_{\ep \to 0} \expe{ \| \vre - \vr \|^{\G + 1}_{L^{\G + 1}(\mathbb T^3)} }\leq
\limsup_{\ep \to 0} \expe{ \intO{ \left( \vre^{\G + 1} - \Ov{\vr^\G} \vr \right) } } \leq 0 \ \mbox{for any} \ t > 0.
\end{equation}
Note that the first inequality follows from the algebraic inequality which holds true
\[
(A - B)^{\Gamma + 1} = (A - B)^\Gamma (A-B) \leq\, (A^\Gamma - B^\Gamma)(A - B) \ \mbox{whenever}\ A,B \geq 0.
\]
In order to see {the rightmost inequality} in (\ref{L5}) we use the method of Lions \cite{LI4} based on regularity of the effective viscous flux. More specifically, mimicking
the technique from the proof of Proposition \ref{prop:1708}, we derive from \eqref{L1}--\eqref{L2} the identity
\begin{equation}\label{L6a}
\begin{split}
\int_T^{T+1} &\intO{ \Big(a \vre^{\gamma + 1}+\delta \vre^{\Gamma + 1}\Big) } \ \dt  =M_\varepsilon
 \int_T^{T+1}  \intO{ \left( a \vre^\gamma+\delta \vre^{\Gamma } \right) }    \dt \\
&+ \int_T^{T+1} \intO{\Big( \vre \vue \cdot \Grad \Delta^{-1}  \Div (\vre \vue ) - \vre \vue \otimes \vue : \Grad \Delta^{-1}_x
\Grad \vre  \Big)} \dt \\
&+
\int_T^{T+1} \intO{ \left(\frac{4}{3} \mu+\eta \right) \Div \vue \ \vre } \dt \\
&+ 2\ep \int_T^{T+1} \intO{ \vre \vue \cdot \Grad \Delta^{-1} \left[
\vre - M_\varepsilon \right] } \dt + \ep \int_T^{T+1} \intO{ \vre^2 \Div \vue } \dt \\
&+
\left[ \intO{ \vr \vu \cdot \Grad \Delta^{-1}_x \left[ \vre - M_\varepsilon \right] } \right]_{t = T}^{t = T+1} \\
&- \sum_{k=1}^\infty\int_T^{T+1} \intO{ \mathbf{G}_k(\vre,\vre\vue) \cdot \Grad \Delta^{-1} \left[ \vre -  M_\varepsilon  \right] } \ \D W_k.
\end{split}
\end{equation}
In addition, since $\vre$ satisfies the equation of continuity in the strong sense, the application of the commutator lemma in the spirit of \cite{DL} yields
\begin{align*}%\label{eq:rendeltalog}
\dif (\vre\log\vre)=-\Div\big(\vre\log\vre\,{\bfu}_\varepsilon\big)-\vre\,\Div\vue+\varepsilon\Delta( \vre\log\vre)-\varepsilon \frac{|\nabla\vre|^2}{\vre}.
\end{align*}
Inserting this into \eqref{L6a} implies
\begin{equation} \label{L6}
\begin{split}
\int_T^{T+1} &\intO{ \Big(a \vre^{\gamma + 1}+\delta \vre^{\Gamma + 1}\Big) } \ \dt  =
 M_\varepsilon\int_T^{T+1}\intO{  \Big(a \vre^\gamma+\delta \vre^{\Gamma}\Big)  }  \dt \\
&+ \int_T^{T+1} \intO{\Big( \vre \vue \cdot \Grad \Delta^{-1}  \Div (\vre \vue ) - \vre \vue \otimes \vue : \Grad\Delta^{-1}_x
\Grad \vre  \Big)} \dt \\
&
- \left( \frac{4}{3} \mu+\eta \right) \left[  \intO{ \vre \log (\vre) } \right]_{t = T}^{t = T+1} - \ep  \left( \frac{4}{3} \mu+\eta \right)\int_T^{T+1} \intO{ \frac{|\Grad \vre|^2}{\vre} } \dt \\
&+  2\ep \int_T^{T+1} \intO{ \vre \vue \cdot \Grad \Delta^{-1} \left[
\vre - M_\varepsilon \right] } \dt + \ep \int_T^{T+1} \intO{ \vre^2 \Div \vue } \dt\\
&+
\left[ \intO{ \vre \vue \cdot \Grad \Delta^{-1}_x \left[ \vre - M_\varepsilon \right] } \right]_{t = T}^{t = T+1} \\
&- \sum_{k=1}^\infty\int_T^{T+1} \intO{ \vc{G}_k(\vre,\vre\vue) \cdot \Grad \Delta^{-1} \left[ \vre - M_\varepsilon\right] } \ \D W_k.
\end{split}
\end{equation}
Similarly, as the limit density $\vr$ also satisfies the renormalized equation of continuity \eqref{eq:rendelta}, we deduce choosing $b(z)=z\log z$ that
\begin{align*}%\label{eq:rendeltalog}
\dif (\vr\log\vr)=-\Div\big(\vr\log\vr\,{\bfu}\big)-\vr\,\Div\vu
\end{align*}
holds true in the sense of distributions.
Therefore, we obtain from the limit equations (\ref{L3}), (\ref{L4}) that\begin{equation*}% \label{L7}
\begin{split}
\int_T^{T+1}& \intO{ \left(a \Ov{\vr^{\gamma}} +\delta\Ov{\vr^\Gamma} \right)\vr  } \ \dt  =M_0
 \int_T^{T+1}  \intO{ \left( a \Ov{\vr^\gamma}+\delta\Ov{\vr^\Gamma} \right) } \dt \\
&+ \int_T^{T+1} \intO{ \Big(\vr \vu \cdot \Grad \Delta^{-1}  \Div (\vr \vu ) - \vr \vu \otimes \vu : \Grad\Delta^{-1}_x
\Grad\vr \Big) } \dt \\
&
- \left( \frac{4}{3} \mu+\eta \right) \left[  \intO{ \vr \log \vr} \right]_{t = T}^{t = T+1} \\
&+
\left[ \intO{ \vr \vu \cdot \Grad \Delta^{-1}_x \left[ \vr - M_0 \right] } \right]_{t = T}^{t = T+1} - \int_T^{T+1} \D M_\Phi,
\end{split}
\end{equation*}
with
\[
\Phi = \Grad \Delta^{-1}_x \left[ \vr -  M_0\right].
\]

Thus passing to expectations and using the fact that the processes are stationary, we get
\begin{align}
\nonumber
\E\bigg[ &\int_T^{T+1}\intO{\Big(a \vre^{\gamma + 1}+\delta\vre^\Gamma\Big)\vre  } \dt\bigg]   \leq
M_{\ep} \,\expe{ \int_T^{T+1}\intO{  \Big(a \vre^\gamma+\delta\vre^\Gamma\Big)  } \dt} \\
\label{2702}&+ \expe{ \int_T^{T+1}\intO{\Big( \vre \vue \cdot \Grad \Delta^{-1}  \Div (\vre \vue ) - \vre \vue \otimes \vue : \Grad\Delta^{-1}_x
\Grad \vre \Big) } \dt } \\
&+ 2 \ep \,\expe{  \intO{ \vre \vue \cdot \Grad \Delta^{-1} \left[
\vre - M_\varepsilon \right] } } + \ep \,\expe{ \intO{ \vre^2 \Div \vue } }.\nonumber
\end{align}
Note that the inequality is due to the fact that we are not able to pass to the limit in the fourth term on the right hand side of \eqref{L6} and we can only use its negativity.
Similarly we obtain
\begin{align}\nonumber
&\expe{ \int_T^{T+1}\intO{ \Big(a \Ov{\vr^{\gamma}}+\delta\Ov{\vr^\Gamma}\Big) \vr  }\dt }  =
M_0\, \expe{ \int_T^{T+1}\intO{ \left( a \Ov{\vr^\gamma}+\delta\Ov{\vr^\Gamma} \right) } \dt } \\
&+ \expe{ \int_T^{T+1} \intO{ \Big(\vr \vu \cdot \Grad \Delta^{-1}  \Div (\vr \vu ) - \vr \vu \otimes \vu : \Grad\Delta^{-1}_x
\Grad \vr \Big) }\dt }.\label{2702b}
\end{align}
Note that the $\varepsilon$-terms in \eqref{2702} vanish due to Proposition \ref{prop:1708} and we have $M_\varepsilon\rightarrow M_0$ as $\varepsilon\to0$.
Consequently, the desired conclusion (\ref{L5}) follows as soon as we observe that
\begin{equation} \label{L8}
\begin{split}
&\lim_{\ep \to 0} \expe{ \int_T^{T+1} \intO{ \Big(\vre \vue \cdot \Grad \Delta^{-1}  \Div (\vre \vue ) - \vre \vue \otimes \vue : \Grad \Delta^{-1}_x
\Grad  \vre \Big) } \dt }\\
&= \expe{ \int_T^{T+1} \intO{ \Big(\vr \vu \cdot \Grad \Delta^{-1}  \Div (\vr \vu ) - \vr \vu \otimes \vu : \Grad \Delta^{-1}_x
\Grad  \vr \Big) } \dt }.
\end{split}
\end{equation}
In fact, \eqref{L8} in combination with \eqref{2702} and \eqref{2702b} implies
\begin{align*}
\limsup_{\ve\to0}\,\expe{ \int_T^{T+1}\intO{\Big(a \vre^{\gamma }+\delta\vre^\Gamma\Big)\vre  } \dt}\leq \expe{\int_T^{T+1} \intO{ \Big(a \Ov{\vr^{\gamma}}+\delta\Ov{\vr^\Gamma}\Big) \vr  } \dt}
\end{align*}
which shows strong convergence of $\vre$ by monotonicity arguments.
Relation \eqref{L8} can be established by compensated compactness arguments (applied $\p$-a.s.) if we show that the expressions under expectations are $\prst$-equi-integrable.
Considering the two summands separately and using continuity of $\Grad \Delta^{-1}_x \Grad$, we have
\[
\begin{split}
\left| \intO{ \vre \vue \cdot \Grad \Delta^{-1}  \Div (\vre \vue ) } \right| &\leq \,
c \| \sqrt{\vre} \vue \|_{L^2(\mathbb T^3)} \| \sqrt{\vre} \|_{L^{2 \Gamma}(\mathbb T^3)}  \| \Grad \Delta^{-1}_x \Grad\vre\vue \|_{L^{\frac{2\Gamma}{\Gamma-1}}(\mathbb T^3; \R^3)}\\
&\leq \,
c \| \sqrt{\vre} \vue \|_{L^2(\mathbb T^3)} \| \sqrt{\vre} \|_{L^{2 \Gamma}(\mathbb T^3)}  \| \vre\vue \|_{L^{\frac{2\Gamma}{\Gamma-1}}(\mathbb T^3; \R^3)}\\
&\leq
c \| \sqrt{\vre} \vue \|_{L^2(\mathbb T^3)}  \| \sqrt{\vre} \|_{L^{2 \Gamma}(\mathbb T^3)}\| \vue \|_{L^{6}(\mathbb T^3; \R^3)} \| \vre \|_{L^\Gamma (\mathbb T^3) }\\
&\leq
c \| \sqrt{\vre} \vue \|_{L^2(\mathbb T^3)} \| \sqrt{\vre} \|_{L^{2 \Gamma}(\mathbb T^3)} \| \vue \|_{W^{1,2}(\mathbb T^3; \R^3)} \| \vre \|_{L^\Gamma (\mathbb T^3) },
\end{split}
\]
as $\Gamma \geq \frac{9}{2}$. Similarly, we have
\[
\begin{split}
\left| \intO{ \vre \vue \otimes \vue : \Grad \Delta^{-1}_x \Grad \vre } \right| &\leq
c \| \sqrt{\vre} \vue \|_{L^2(\mathbb T^3)} \| \vu \|_{W^{1,2}(\mathbb T^3; \R^3)} \| \sqrt{\vre} \|_{L^{2 \Gamma}(\mathbb T^3)} \| \vre \|_{L^\Gamma (\mathbb T^3) }.
\end{split}
\]
Here, in accordance with (\ref{d141}),
\[
\expe{ \| \sqrt{\vre} \vue \|^2_{L^2(\mathbb T^3)} \| \vu \|^2_{W^{1,2}(\mathbb T^3; \R^3)} } \leq c(\delta,G,M_0),
\]
while, by virtue of (\ref{d131}),
\[
\| \sqrt{\vre} \|_{L^{2 \Gamma}(\mathbb T^3)} \| \vre \|_{L^\Gamma (\mathbb T^3) } = \| \vre \|_{L^\Gamma (\mathbb T^3)}^{\frac{3}{2}} \in L^q(\Omega),\qquad
q = \frac{2 \Gamma }{3} > 2.
\]

We have shown (\ref{L5}); whence strong convergence of $\vre$. Consequently, as in \cite[Section 5.2]{BrHo}, we may identify the nonlinear terms in \eqref{L4} and hence $[\vr,\vu]$ is a weak martingale solution to \eqref{L1'}--\eqref{L2'}. Stationarity then follows by Lemma \ref{lem:stac} and Lemma \ref{lem:stac2}. The estimate \eqref{d1'} and \eqref{d51'}, respectively, is obtained by weak lower semicontinuity from  \eqref{d1} and \eqref{d12a}, respectively, since the constants were uniform in $\ve$. The same arguments give the energy inequality \eqref{b2ac}. Note that the passage to the limit in the stochastic integral can be justified for instance with help of \cite[Lemma 2.1]{debussche1}.
\end{proof}

\begin{Remark}\label{rem:1808}
It is important to note that there is an essential difference between the strong convergence of the density in the existence theory, see \cite[Section 5.2]{BrHo}, and the above proof. More specifically, the existence theory requires compactness of the initial data which is not available in the present setting. Instead the fact that the solution is stationary must be used.
\end{Remark}

\section{Vanishing artificial pressure limit}
\label{P}

As the final step of the proof of our main result, Theorem \ref{Tm1}, it remains to perform the last limit procedure, that is, $\d\to0$. Recall that according to  Proposition \ref{prop:1608b'}, the stationary solutions constructed in the previous section already satisfy the uniform bounds \eqref{d1'} and \eqref{d51'}. Nevertheless, the pressure estimate as well as the estimate for the energy and velocity from Proposition \ref{prop:1708} all blow up as $\d$ vanishes. Therefore, in order to apply the compactness argument from \cite[Section 6]{BrHo} it is necessary to improve these estimates. The rest of the construction then proceeds exactly as in \cite[Section 6.1--6.3]{BrHo}.

\begin{Proposition}\label{prop:1808}
Let $[\vr_\d,\vu_\d]$ be the stationary solution to \eqref{L1'}--\eqref{L2'} constructed in Proposition~\ref{prop:1608b'}. Then the following uniform bound holds true for some $\a>0$ and  a.e. $t\in(0,\infty)$
\begin{equation} \label{d13}
\expe{  \intO{ \bigg[ a \vr_{\delta}^{\gamma + \alpha} + \delta \vr_{\delta}^{\Gamma + \alpha}   +
\vr_{\delta}^{1 + \alpha} |\vu_{\delta}|^2 \bigg]} } \leq c(G,M_0),
\end{equation}
In addition, for some   $s>1$
 and for a.e.  $T > 0$ and  $\tau>0$
\begin{equation} \label{d151'}
\begin{split}
&\expe{ \left( \sup_{t \in [T, T + \tau] } \intO{ \left[ \frac{1}{2} \vr_\d |\vu_\d|^2 + \frac{a}{\gamma - 1} \vr_\d^\gamma + \frac{\delta}{\Gamma - 1} \vr_\d^{\Gamma} \right] } \right)^s }\\
&\qquad +\expe{\bigg(\int_T^{T+\tau}\|\vu_\d\|_{W^{1,2}(\mt;\R^3)}^2\dt\bigg)^s}\leq c(\tau, M_0, G, s),
\end{split}
\end{equation}
where the constant is independent of $T$.

\end{Proposition}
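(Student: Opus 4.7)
The plan is to derive the improved pressure bound \eqref{d13} by adapting the test-function argument used in Proposition~\ref{prop:1708}, but replacing $\nabla_x\Delta_x^{-1}[\vr_\varepsilon - M_\varepsilon]$ by
$$\varphi = \nabla_x\Delta_x^{-1}\bigl[\vr_\delta^{\alpha} - \overline{\vr_\delta^{\alpha}}\bigr], \qquad \overline{\vr_\delta^{\alpha}} = \tfrac{1}{|\mathcal{O}|}\intO{\vr_\delta^{\alpha}},$$
for some small exponent $\alpha>0$ to be fixed later. This is the stochastic analogue of the Feireisl--Petzeltov\'a technique from the vanishing artificial pressure limit. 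Applying It\^o's formula to $f(\vr,\vr\vu)=\intO{\vr\vu\cdot\nabla_x\Delta_x^{-1}[\vr^\alpha-\overline{\vr^\alpha}]}$ and using the renormalized continuity equation \eqref{eq:rendelta} with $b(z)=z^\alpha$, so that $\partial_t \vr^\alpha + \Div(\vr^\alpha\vu) + (\alpha-1)\vr^\alpha\Div\vu = 0$, one produces an identity whose leading term on the left is $\intO{(a\vr_\delta^{\gamma+\alpha}+\delta\vr_\delta^{\Gamma+\alpha})}$ and whose right-hand side collects: the ``mass-average times pressure'' term $\overline{\vr_\delta^{\alpha}}\intO{(a\vr_\delta^\gamma+\delta\vr_\delta^\Gamma)}$; the convective contribution $\vr\vu\otimes\vu:\nabla_x\Delta_x^{-1}\nabla_x\vr^\alpha$ balanced by $\vr\vu\cdot\nabla_x\Delta_x^{-1}[\Div(\vr^\alpha\vu)+(\alpha-1)\vr^\alpha\Div\vu]$; a viscous piece $\mathbb{S}(\nabla\vu):\nabla\nabla_x\Delta_x^{-1}[\vr^\alpha-\overline{\vr^\alpha}]$; the It\^o correction $\sum_k(\mathbf{G}_k,\nabla_x\Delta_x^{-1}[\cdot])$ from the stochastic integral; and a time-boundary contribution which vanishes after taking expectations thanks to stationarity.

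The estimation then proceeds term by term, using only quantities that are already bounded uniformly in $\delta$: the velocity estimate \eqref{d1'}, the product estimate \eqref{d51'}, mass conservation $\intO{\vr_\delta}=M_0$, the Sobolev embedding $W^{1,2}(\mt)\hookrightarrow L^6(\mt)$, the continuity of $\nabla_x\Delta_x^{-1}\nabla_x$ on $L^p$ for $1<p<\infty$, and the hypothesis \eqref{m1} which gives $\sum_k\alpha_k^2=G$ and hence control of the It\^o correction in terms of $\intO{\vr_\delta}$. For each term we intend to use H\"older's inequality together with interpolation between $L^1(\mt)$ (where the density is trivially bounded by $M_0$) and $L^{\gamma+\alpha}(\mt)$ (the quantity we are trying to estimate), so that, for a sufficiently small $\alpha = \alpha(\gamma)>0$, every term can be absorbed in the form $\kappa\,\expe{\intO{(a\vr_\delta^{\gamma+\alpha}+\delta\vr_\delta^{\Gamma+\alpha})}} + c(\kappa,G,M_0)$ via the weighted Young inequality. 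The bound on $\intO{\vr_\delta^{1+\alpha}|\vu_\delta|^2}$ then follows by the same Sobolev--H\"older chain used in \eqref{e4}.

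Once \eqref{d13} is in hand, \eqref{d151'} is obtained by revisiting the energy inequality \eqref{b2ac} for $n=1$, taking the $s$-th power of the supremum over $t\in[T,T+\tau]$, and applying the Burkholder--Davis--Gundy inequality to the stochastic integral (estimated through \eqref{b4bc}) together with \eqref{b4n} for the It\^o correction. The leftover expressions $\int_T^{T+\tau}\intO{(\vr_\delta^{\gamma-1}+\delta\vr_\delta^{\Gamma-1})}$ and $\int_T^{T+\tau}\intO{\vr_\delta|\vu_\delta|^2}$ are controlled via \eqref{d13} and a computation analogous to \eqref{e4}, provided $s>1$ is chosen small enough so that $s(\gamma-1)\leq \gamma+\alpha$, $s(\Gamma-1)\leq\Gamma+\alpha$ and the integrability exponent in the convective estimate is compatible with \eqref{d51'}. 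Stationarity lets us convert the time integrals of expectations into a multiplicative factor $\tau$, giving the claimed $T$-independence.

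The main obstacle is the convective term $\expe{\int\vr_\delta\vu_\delta\otimes\vu_\delta:\nabla_x\Delta_x^{-1}\nabla_x\vr_\delta^{\alpha}}$. In Proposition~\ref{prop:1708} the analogous term was tamed using the $\delta$-dependent bound $\vr_\varepsilon\in L^\Gamma$ with $\Gamma\geq 9/2$, which is no longer at our disposal when $\delta\to 0$. Here we only have $\vu_\delta\otimes\vu_\delta\in L^3(\mt;\R^{3\times 3})$ by Sobolev embedding and $\vr_\delta\in L^1(\mt)$ uniformly, so $\nabla_x\Delta_x^{-1}\nabla_x\vr_\delta^\alpha$ must be placed in a sufficiently low $L^p$ space, and the remaining density factor absorbed into $L^{\gamma+\alpha}$. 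This is precisely the point that dictates the size of $\alpha$ and determines the admissible range $\gamma>3/2$; any attempt to enlarge $\alpha$ or weaken the condition on $\gamma$ collides with the Sobolev exponent $6$ in three dimensions.
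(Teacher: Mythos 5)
Your proposal follows essentially the same route as the paper: the test function $\nabla_x\Delta_x^{-1}\bigl[\vr_\delta^\alpha-\overline{\vr_\delta^\alpha}\bigr]$ with $\alpha$ small, the renormalized continuity equation with $b(z)=z^\alpha$, stationarity to kill boundary and time-derivative terms, H\"older/Sobolev estimates and weighted Young absorption into the $L^{\gamma+\alpha}$ pressure, and then \eqref{d51'} together with \eqref{b2ac} and the Burkholder--Davis--Gundy inequality to close. Two minor corrections worth recording: the bound on $\intO{\vr_\delta^{1+\alpha}|\vu_\delta|^2}$ in \eqref{d13} is not derived afterward by an \eqref{e4}-type chain but appears \emph{directly on the left-hand side} of the test-function identity, coming from the trace part when one splits $\vr\vu\otimes\vu=(\vr\vu\otimes\vu-\tfrac13\vr|\vu|^2\mathbb{I})+\tfrac13\vr|\vu|^2\mathbb{I}$ against $\nabla_x\nabla_x\Delta_x^{-1}$; and the energy inequality \eqref{b2ac} no longer contains the $\vr^{\gamma-1},\vr^{\Gamma-1}$ terms after $\varepsilon\to0$ (they originated in the $H$-damping of \eqref{b1}), so the constraint $s(\gamma-1)\leq\gamma+\alpha$ is superfluous -- the binding restriction on $s$ instead comes from controlling $\bigl(\intO{\vr_\delta|\vu_\delta|^2}\bigr)^s$ via \eqref{d14} and Jensen.
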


\begin{proof}
As far as the pressure estimates are concerned we use the test function
$$\Grad\Delta_x^{-1} \bigg[\varrho^\alpha-\int_{\mathbb T^3}\varrho^\alpha\dx\bigg],\quad \alpha>0.$$
We obtain
after a rather tedious but straightforward manipulation the following analogue of \eqref{d41}
\begin{equation} \label{d2}
\begin{split}
&\int_T^{T+1} \intO{ \left( a \vr_\delta^{\gamma + \alpha} + \delta \vr_\delta^{\Gamma + \alpha} \right) } \ \dt + \int_T^{T+1} \intO{ \frac{1}{3} \vr_\delta^{1 + \alpha} |\vu|^2 } \ \dt \\
&=
 \int_T^{T+1} \left( \intO{ \left( a \vr_\delta^\gamma +
\delta \vr_\delta^\Gamma \right) } \intO{ \vr_\delta^\alpha } \right) \dt + \frac{1}{3}  \int_T^{T+1}
\left( \intO{ \vr_\delta |\vu_\delta|^2 } \dx \intO{ \vr_\delta^\alpha } \right) \dt\\
&\qquad+
\int_T^{T+1} \intO{ \left(\frac{4}{3} \mu+\eta \right) \Div \vu_\delta \ \vr_\delta^\alpha } \dt \\
&\qquad - \int_T^{T+1} \intO{ \left( \vr_\delta \vu_\delta \otimes \vu_\delta - \frac{1}{3} \vr_\delta |\vu_\delta |^2 \mathbb{I} \right) : \nabla_x \Delta^{-1}_x
\nabla_x \left[ \vr_\delta^\alpha \right] } \dt \\
&\qquad+
\left[ \intO{ \vr_\delta \vu_\delta \cdot \Grad \Delta^{-1}_x \left[ \vr_\delta^\alpha - \intO{\vr_\delta^\alpha } \right] } \right]_{t = T}^{t = T+1} \\
&\qquad- \int_T^{T+1} \intO{ \vr_\delta \vu_\delta \cdot \Grad \Delta^{-1} [ \D (\vr_\delta^\alpha) ] } \\
&\qquad- \sum_{k=1}^\infty\int_T^{T+1} \intO{ \vc{G}_k(\vr_\delta,\vr_\delta \vu_\delta) \cdot \Grad \Delta^{-1} \left[ \vr_\delta^\alpha -  \intO{\vr_\delta^\alpha } \right] } \ \D W_k.
\end{split}
\end{equation}
Next, we evoke the renormalized equation of continuity \eqref{eq:rendelta}
\begin{equation*}% \label{d3}
\D \vr_\delta^{\alpha} + \Div (\vr_\delta^\alpha \vu_\delta ) \dt + (\alpha - 1) \vr_\delta^\alpha \Div \vu_\delta  \dt = 0
\end{equation*}
deducing from (\ref{d2})
\begin{equation} \label{d4}
\begin{split}
&\int_T^{T+1} \intO{ \left( a \vr_\delta^{\gamma + \alpha} + \delta \vr_\delta^{\Gamma + \alpha} \right) } \ \dt + \int_T^{T+1} \intO{ \frac{1}{3} \vr_\delta^{1 + \alpha} |\vu_\delta|^2 } \ \dt \\
&=
 \int_T^{T+1} \left( \intO{ \left( a \vr_\delta^\gamma +
\delta \vr^\Gamma \right) } \intO{ \vr_\delta^\alpha } \right) \dt + \frac{1}{3} \int_T^{T+1}
\left( \intO{ \vr_\delta |\vu_\delta|^2 } \intO{ \vr_\delta^\alpha } \right) \dt\\
&\qquad+
\int_T^{T+1} \intO{ \left(\frac{4}{3} \mu+\eta \right) \Div \vu_\delta \ \vr_\delta^\alpha } \dt \\
&\qquad - \int_T^{T+1} \intO{ \left( \vr_\delta \vu_\delta \otimes \vu_\delta - \frac{1}{3} \vr |\vu_\delta |^2 \mathbb{I} \right) : \Grad \Delta^{-1}_x
\Grad \left[ \vr_\delta^\alpha \right] } \dt \\
&\qquad+
\left[ \intO{ \vr_\delta \vu_\delta \cdot \Grad \Delta^{-1}_x \left[ \vr_\delta^\alpha -  \intO{\vr_\delta^\alpha } \right] } \right]_{t = T}^{t = T+1} \\
&\qquad+ \int_T^{T+1} \intO{ \vr_\delta \vu_\delta \cdot \Grad \Delta_x^{-1} \left[ \Div (\vr_\delta^\alpha \vu_\delta ) + (\alpha - 1) \vr_\delta^\alpha \Div \vu_\delta  \right] } \dt  \\
&\qquad- \sum_{k=1}^\infty\int_T^{T+1} \intO{ \vc{G}_k(\vr_\d,\vr_\d\vu_\d) \cdot \Grad \Delta^{-1}_x \left[ \vr_\delta^\alpha -  \intO{\vr_\delta^\alpha } \right] } \ \D W_k.
\end{split}
\end{equation}

Before proceeding, we make the assumption that $0 < \alpha < 1/3$, which implies in particular
\begin{equation*}% \label{b5-}
\left| \intO{ \vr_\delta^\alpha } \right| \leq c(M_0), \qquad \left\| \Grad\Delta_x^{-1} \left[ \vr_\delta^\alpha -  \intO{\vr_\delta^\alpha } \right] \right\|_{L^\infty(\mathbb T^3;\R^3)} \leq c(M_0)
\end{equation*}
using H\"older's inequality, Sobolev's embedding and continuity of $\Grad\Delta_x^{-1}\Grad$.
Passing to expectations in (\ref{d4}) and keeping in mind that the processes are stationary we deduce
\begin{equation} \label{Dd5}
\begin{split}
&\expe{  \intO{ \bigg[ a \vr_{\delta}^{\gamma + \alpha} + \delta \vr_{\delta}^{\Gamma + \alpha}   +
\vr_{\delta}^{1 + \alpha} |\vu_{\delta}|^2 \bigg]} }  \\
& \leq
c(M_0) \left( \expe{ \intO{ \left[ \frac{1}{2} \vr_\delta |\vu_\delta|^2 + \frac{a}{\gamma - 1} \vr^\gamma + \frac{\delta}{\Gamma - 1} \vr_\delta^\Gamma \right] } } + 1 \right) \\
 &\qquad+
\expe{ \intO{ \left( \frac{4}{3}\mu+\eta \right) \Div \vu_\delta \ \vr_\delta^\alpha  } } \\
& \qquad+ \expe{ \intO{ \left( \vr_\delta \vu_\delta \otimes \vu_\delta - \frac{1}{3} \vr_\delta |\vu_\delta |^2 \mathbb{I} \right) : \Grad \Delta^{-1}_x \Grad \left[ \vr_\delta^\alpha \right] } } \\
&\qquad + \expe{ \intO{ \vr_\delta \vu_\delta \cdot \Grad \Delta^{-1}_x [ \Div (\vr_\delta^\alpha \vu_\delta ) + (\alpha - 1) \vr_\delta^\alpha \Div \vu_\delta ] }}.
\end{split}
\end{equation}
Moreover, using the uniform bound (\ref{d1'}) we may further reduce (\ref{Dd5}) to
\begin{equation*}% \label{d6}
\begin{split}
&\expe{  \intO{ \bigg[ a \vr_{\delta}^{\gamma + \alpha} + \delta \vr_{\delta}^{\Gamma + \alpha}   +
\vr_{\delta}^{1 + \alpha} |\vu_{\delta}|^2 \bigg]} } \\
& \leq \expe{ \intO{ \left( \vr_\delta \vu_\delta \otimes \vu_\delta - \frac{1}{3} \vr_\delta |\vu_\delta |^2 \mathbb{I} \right) : \Grad \Delta^{-1}_x \Grad \left[ \vr_\delta^\alpha \right] } } \\
&\qquad+ \expe{ \intO{ \vr_\delta \vu_\delta \cdot \Grad \Delta^{-1}_x [ \Div (\vr_\delta^\alpha \vu_\delta ) + (\alpha - 1) \vr_\delta^\alpha \Div \vu_\delta ] }} + c(G,M_0).
\end{split}
\end{equation*}
Note that we applied Young's inequality to the first and second term on the right-hand side of \eqref{Dd5} and in order to absorb the arising term eventually.
To control the remaining integrals on the right hand side, we first use H\" older's inequality to obtain
\begin{equation*}% \label{Dd7}
\begin{split}
&\left| \expe{ \intO{ \left( \vr_\delta \vu_\delta \otimes \vu_\delta - \frac{1}{3} \vr_\delta |\vu_\delta |^2 \mathbb{I} \right) : \Grad \Delta^{-1}_x \Grad \left[ \vr_\delta^\alpha \right] } } \right|\\
&\leq c\,\expe{ \| \sqrt{\vr_\delta} \vu_\delta \|_{L^2(\mathbb T^3; \R^3)} \| \vu_\delta \|_{L^6(\mathbb T^3; \R^3)} \left\| \sqrt{\vr_\d}\, \Grad \Delta^{-1} \Grad \left[ \vr_\delta^\alpha \right] \right\|_{L^3(\mathbb T^3; \R^{3\times 3})} }\\
&\leq c
\left( \expe{\| \sqrt{\vr} \vu_\delta \|^2_{L^2(\mathbb T^3; \R^3)} \| \vu_\delta \|^2_{W^{1,2}(\mathbb T^3; \R^3)}} +\expe{  \left\| \sqrt{\vr_\d}\, \Grad \Delta^{-1} \Grad \left[ \vr_\delta^\alpha \right] \right\|^2_{L^{3 }(\mathbb T^3; \R^{3\times 3})} } \right).
\end{split}
\end{equation*}
Furthermore, we have
\[
\left\| \sqrt{\vr_\d}\, \Grad \Delta^{-1}_x \Grad \left[ \vr_\delta^\alpha \right] \right\|^2_{L^3(\mt; \R^{3 \times 3})}
\leq \left\| \sqrt{\vr_\delta} \right\|^2_{L^{2 \gamma}(\mathbb T^3)} \left\| \Grad \Delta^{-1}_x \Grad \left[ \vr_\delta^\alpha \right] \right\|^2_{L^q(\mathbb T^3; \R^{3 \times 3})}
\]
\[
\frac{1}{2 \gamma} + \frac{1}{q} = \frac{1}{3}, \ \gamma > \frac{3}{2}.
\]
Now, we choose $\alpha > 0$ so small that $\alpha q \leq 1$ to conclude that
\begin{equation*}% \label{d8}
\left\| \sqrt{\vr_\d}\, \Div \Grad \Delta^{-1}_x \Grad \left[ \vr_\delta^\alpha \right] \right\|^2_{L^3(\mathbb T^3; \R^{3 \times 3})}
\leq c(M_0) \|\vr_\d\|_{L^\g(\mt)}	.
\end{equation*}

Similarly, we can handle
\begin{equation*}% \label{d8a}
\begin{split}
&\left|  \intO{ \vr_\delta \vu_\delta \cdot \Grad \Delta^{-1}_x \Div [ \vr_\delta^\alpha \vu_\delta ] } \right|\\
& \leq
\| \sqrt{\vr_\delta} \vu_\delta \|_{L^2(\mathbb T^3; \R^3)} \| \sqrt{\vr_\delta} \|_{L^{2 \gamma}(\mathbb T^3)} \left\| \Grad \Delta^{-1}_x \Grad \left[ \vr_\delta^\alpha \vu_\delta  \right]
\right\|_{L^q(\mathbb T^3; \R^{3 \times 3})}
\end{split}
\end{equation*}
where
\[
\frac{1}{2} + \frac{1}{2 \gamma} + \frac{1}{q} = 1, \ \mbox{in particular}\ q < 6 \ \mbox{if}\ \gamma > \frac{3}{2},
\]
and where
\begin{equation*}% \label{d8b}
\left\| \Grad \Delta^{-1}_x \Grad \left[ \vr_{\delta}^\alpha \vu  \right]
\right\|_{L^q(\mathbb T^3; \R^{3\times 3})} \leq \| \vr_{\delta}^\alpha \vu_\delta \|_{L^q(\mathbb T^3; \R^3)} \leq \| \vu_\delta \|_{L^6(\mathbb T^3; \R^3)} \| \vr_{\delta}^\alpha \|_{L^s(\mathbb T^3)},
\ \frac{1}{6} + \frac{1}{s} = \frac{1}{q}.
\end{equation*}
Taking $\alpha s \leq 1$ we get, similarly to the above,
\begin{equation*}% \label{d9}
\begin{split}
&\left|  \intO{ \vr_{\delta} \vu_\delta \cdot \Grad \Delta^{-1}_x \Div [ \vr_{\delta}^\alpha \vu_\delta  ] } \right|\\
& \leq c (M_0) \left( \| \sqrt{\vr_{\delta}} \vu_\delta \|^2_{L^2(\mathbb T^3; \R^3)}
\| \vu_\delta \|^2_{W^{1,2}(\mathbb T^3; \R^3)} + \| \vr_{\delta} \|_{L^\gamma(\mathbb T^3; \R^3)} \right).
\end{split}
\end{equation*}
Finally,
\[
\begin{split}
&\left| \intO{\vr_{\delta} \vu_\delta \cdot \Grad \Delta^{-1}_x [ \vr_{\delta}^\alpha \Div \vu_\delta ]  }    \right|\\
&\leq \| \sqrt{\vr_{\delta}} \|_{L^{2 \gamma}(\mathbb T^3; \R^3)} \| \sqrt{\vr_{\delta}} \vu_\delta \|_{L^2(\mathbb T^3; \R^3)}
\left\| \Grad \Delta^{-1}_x [\vr_{\delta}^\alpha \Div \vu_\delta ] \right\|_{L^q(\mathbb T^3; \R^3)} \\
& \leq
\frac{1}{2} \left( \| \vr_{\delta} \|_{L^\gamma(\mathbb T^3)} + \| \sqrt{\vr_{\delta} \vu} \|^2_{L^2(\mathbb T^3;\R^3)} \left\| \Grad \Delta^{-1}_x [\vr_{\delta}^\alpha \Div \vu_\delta ] \right\|_{L^q(\mathbb T^3; \R^3)}^2\right),
\end{split}
\]
where
\[
\frac{1}{2\gamma} + \frac{1}{2} + \frac{1}{q} = 1, \ \ q < 6 \ \mbox{if}\ \gamma > \frac{3}{2}.
\]
As the $\Grad \Delta_x^{-1}$-operator gains one derivative, we get, by means of the standard Sobolev embedding,
\[
\left\| \Grad \Delta_x^{-1}[\vr_{\delta}^\alpha \Div \vu_\delta ] \right\|_{L^q(\mathbb T^3; \R^3)} \leq \| \vr_{\delta}^\alpha \Div \vu_\delta \|_{L^r(\mathbb T^3)}, \ r < 2.
\]
Thus, similarly to the previous steps, we may conclude that
\begin{equation*}% \label{d10}
\left| \intO{\vr_{\delta} \vu_{\delta} \cdot \Grad [ \vr_{\delta}^\alpha \Div \vu_{\delta} ]  }    \right|
\leq c (M_0) \left( \| \sqrt{\vr_{\delta}} \vu_{\delta} \|^2_{L^2(\mathbb T^3; \R^3)}
\| \vu_{\delta} \|^2_{W^{1,2}_0(\mathbb T^3; \R^3)} + \| \vr_{\delta} \|_{L^\gamma(\mathbb T^3; \R^3)} \right).
\end{equation*}

Summing up the above estimates we obtain 
\begin{equation*}% \label{d11}
\begin{split}
&\expe{ \intO{ a \vr_{\delta}^{\gamma + \alpha}  + \delta \vr_{\delta}^{\Gamma + \alpha}+\frac{1}{3} \vr_{\delta}^{1 + \alpha} |\vu_{\delta}|^2 } } \\
&\leq \expe{ \intO{ \left[ \frac{1}{2} \vr_{\delta} |\vu_{\delta}|^2 + \frac{a}{\gamma - 1} \vr_{\delta}^\gamma + \frac{\delta}{\Gamma - 1} \vr_{\delta}^\Gamma  \right] } \intO{ \|  \vu_{\delta} \|_{W^{1,2}(\mt;\R^3)}^2 } } + c(G,M_0),
\end{split}
\end{equation*}
where we absorbed the term $\| \vr_{\delta} \|_{L^\gamma(\mathbb T^3; \R^3)}$ in the left-hand side.
We close the estimates by evoking (\ref{d51'}).
Thus we may conclude that any global in time stationary solutions admit the uniform bound \eqref{d13} as well as
\begin{equation} \label{d14}
\expe{ \left(1 + \intO{ \left[ \frac{1}{2} \vr_{\delta} |\vu_{\delta}|^2 + \frac{a}{\gamma - 1} \vr_{\delta}^\gamma + \frac{\delta}{\Gamma - 1} \vr_{\delta}^\Gamma \right] } \right) \| \vu_{\delta} \|^2_{W^{1,2} (\mathbb T^3; \R^3)} } \leq c(G,M_0).
\end{equation}

Finally, we claim that
\[
\expe{ \left( \intO{ \left[ \frac{1}{2} \vr_{\delta} |\vu_{\delta}|^2 + \frac{a}{\gamma - 1} \vr_{\delta}^\gamma + \frac{\delta}{\Gamma - 1} \vr_{\delta}^{\Gamma} \right] } \right)^s } \leq
c
\]
for a certain $s = s(\alpha) > 1$. Indeed the $\vr_{\delta}$-dependent terms can be estimated directly by (\ref{d13})
while, by H\" older inequality and Sobolev embedding,
\[
\begin{split}
\left( \intO{ \vr_{\delta} |\vu_{\delta} |^2 } \right)^s &\leq \left( \| \sqrt{\vr_{\delta}} \vu_{\delta} \|_{L^2(\mathbb T^3; \R^3)} \| \vu_{\delta} \|_{L^6(\mathbb T^3; \R^3)} \right)^s \| \sqrt{\vr_{\delta}} \|_{L^{3}(\mathbb T^3)}^s\\
&\leq c \left[ \| \sqrt{\vr_{\delta}} \vu_{\delta} \|^2_{L^2(\mathbb T^3; \R^3)} \| \vu_{\delta} \|_{W^{1,2}(\mathbb T^3; \R^3)}^2 + \left( \intO{ \vr_{\delta}^\gamma } \right)^{\frac{1}{\gamma( 2 -s)}} \right].
\end{split}
\]
Note that we also took into account $\gamma>\frac{3}{2}$. This can be estimated by \eqref{d14} provided $s<2-\frac{1}{\gamma}$. The term with $\vr_{\delta}^\gamma$ (and $\delta\vr_{\delta}^\Gamma$) is estimated by Jensen's inequality.
Now we go back to the energy inequality (\ref{b2ac}). Due to \eqref{b4n} we obtain after taking the power $s$ and the supremum in time and expectation
\[
\begin{split}
&\expe{ \left( \sup_{t \in [T, T + \tau] } \intO{ \left[ \frac{1}{2} \vr_{\delta} |\vu_{\delta}|^2 + \frac{a}{\gamma - 1} \vr_{\delta}^\gamma + \frac{\delta}{\Gamma - 1} \vr_{\delta}^{\Gamma} \right] } \right)^s }+\expe{\bigg(\int_T^{T+\tau}\|\vu_\d\|_{W^{1,2}(\mt;\R^3)}^2\dt\bigg)^s} \\ &\leq c( G,M_0)  + \expe{ \sup_{t \in [T, T + \tau] } \left|\sum_{k=1}^\infty\int_{T}^{t}  \intO{ \vc{G}_k(\vr_\d,\vr_\d\vu_\d) \cdot \vu_{\delta} } \, \D W_k \right|^s } .
\end{split}
\]
{Note that all terms are well-defined by \eqref{d14}.}
Here, the second term on the right hand side is controlled by (\ref{b4bc}) and the Burkholder-Davis-Gundy inequality similarly to \eqref{e4} as follows
\begin{align*}
&\expe{ \sup_{t \in [T, T + \tau] } \left|\sum_{k=1}^\infty\int_{T}^{t}  \intO{ \vc{G}_k(\vr_\d,\vr_\d\vu_\d) \cdot \vu_{\delta} } \, \D W_k \right|^s }\\
&\leq c(s,M_0)\,\expe{\bigg(\int_T^{T+\tau} \bigg(\intO{\frac{1}{2}\vr_\d|\vu_\d|^2}\bigg)^\frac{s}{2}\dt},
\end{align*}
which can be again estimated by \eqref{d13}.
We therefore conclude that \eqref{d151'} holds true.
for a.e. $T > 0$, where the constant depends on $\tau$ but it is independent of $T$.
\end{proof}

Finally, we have all in hand in order to complete the proof of Theorem \ref{Tm1}.

\begin{proof}[Proof of Theorem \ref{Tm1}]
We follow the lines of \cite[Section 6]{BrHo}. In view of Proposition \ref{prop:1808}, we are able to apply the Jakubowski-Skorokhod representation theorem and obtain convergence of $[\vr_\d,\vu_\d]$ (in fact, we obtain a new family of martingale solutions defined on a new probability space but keep the original notation for simplicity) to a stationary weak martingale solution of
\begin{equation*}% \label{L3''}
\int_{0}^{\infty} \intO{ \left[ \vr \partial_t \varphi + \vr \vu \cdot \Grad \varphi \right] } \dt = 0, \qquad \intO{ \vr } = M_0,
\end{equation*}
for any $\varphi \in \DC((0, \infty) \times \mt)$ $\prst$-a.s.,
\begin{equation*}% \label{L4''}
\begin{split}
\int_0^\infty \partial_t \psi \intO{ \vr \vu \cdot \varphi} \dt  &+ \int_0^\infty\psi \intO{  \vr \vu \otimes \vu : \Grad \varphi } \dt
+ \int_0^\infty \psi\intO{ a  \Ov{\vr^\gamma}  \Div \varphi } \dt  \\
&-   \int_0^\infty \psi\intO{  \mathbb{S}(\Grad \vu) : \Grad \varphi} \dt  =- \int_0^\infty \psi  \D M_\varphi
\end{split}
\end{equation*}
for any $\psi \in \DC((0, \infty))$, $\varphi \in C^\infty(\mt; \R^3)$ $\prst$-a.s. Here $M_\varphi$ is a square integrable martingale and the bars denote the corresponding weak limits.
In addition, $\varrho$ satisfies the renormalized equation of continuity.

In order to identify the nonlinear density dependent terms, we keep Remark \ref{rem:1808} in mind and apply the effective viscous flux method  in the same way as in \cite[Section 6.1--6.3]{BrHo}, which then completes the proof.
Note that similarly to Section \ref{L}, even the limited moment estimates from Proposition \ref{prop:1808} are sufficient for the passage to the limit.
\end{proof}

\appendix

\section{Auxiliary results}

In this final section, we collect several auxiliary results concerning the two notions of stationarity introduced in Definition \ref{D2} and Definition \ref{D1}. 
First of all, we observe that  it is actually enough to consider Definition \ref{D1} for $q=1$.

\begin{Lemma}\label{lem:s}
Let $k\in\mn_0$, $p,q\in[1,\infty)$. If $\bfU$ is stationary  on $L^1_{\mathrm{loc}}([0,\infty);W^{k,p}(\mt))$ in the sense of Definition \ref{D1} and $\bfU\in L^q_{\mathrm{loc}}([0,\infty);W^{k,p}(\mt))$ $\p$-a.s. then $\bfU$ is stationary on $L^q_{\mathrm{loc}}([0,\infty);W^{k,p}(\mt))$.
\end{Lemma}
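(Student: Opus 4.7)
The plan is to reduce stationarity on the stronger space $L^q_{\rm loc}([0,\infty);W^{k,p}(\mt))$ to the given stationarity on $L^1_{\rm loc}([0,\infty);W^{k,p}(\mt))$ by comparing their Borel $\sigma$-algebras. Write $X_q := L^q_{\rm loc}([0,\infty);W^{k,p}(\mt))$ and $X_1 := L^1_{\rm loc}([0,\infty);W^{k,p}(\mt))$. Both are Polish spaces when equipped with the standard complete metric
\begin{equation*}
d_r(f,g)=\sum_{M\in\mn} 2^{-M}\big(\|f-g\|_{L^r(0,M;W^{k,p})}\wedge 1\big),
\end{equation*}
and since each interval $[0,M]$ has finite Lebesgue measure, H\"older's inequality gives the continuous (and injective) inclusion $\iota\colon X_q\hookrightarrow X_1$.

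The key step is to establish the Borel identity
\begin{equation*}
\mathcal{B}(X_q)=\mathcal{B}(X_1)\cap X_q.
\end{equation*}
The inclusion $\supset$ is immediate from continuity of $\iota$. For the reverse inclusion, I would show that each seminorm $N_M(f):=\|f\|_{L^q(0,M;W^{k,p})}$ is lower semicontinuous as a map $X_1\to[0,\infty]$: if $f_n\to f$ in $X_1$, a subsequence converges pointwise in $W^{k,p}$ for a.e.\ $t\in[0,M]$, and Fatou's lemma yields $N_M(f)\leq\liminf_n N_M(f_n)$. Consequently, every open ball in $X_q$ is Borel in $X_1$; since $X_q$ is separable, every open set in $X_q$ is a countable union of such balls, and the inclusion $\mathcal{B}(X_q)\subset\mathcal{B}(X_1)\cap X_q$ follows. (As an alternative slicker route one may invoke the Lusin--Souslin theorem, which asserts that a continuous injection between Polish spaces is a Borel isomorphism onto its image, giving the same conclusion.)

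With this in hand, the proof is concluded as follows. Both $\bfU$ and $\mathcal{S}_\tau\bfU$ take values in $X_q$ with probability one -- the former by assumption, the latter because the straightforward bound $\|\mathcal{S}_\tau f\|_{L^q(0,M;W^{k,p})}\leq \|f\|_{L^q(0,M+\tau;W^{k,p})}$ shows that $X_q$ is invariant under the shift. Given any $A\in\mathcal{B}(X_q)$, choose $\tilde A\in\mathcal{B}(X_1)$ with $A=\tilde A\cap X_q$; then
\begin{equation*}
\prst(\bfU\in A)=\prst(\bfU\in\tilde A)=\prst(\mathcal{S}_\tau\bfU\in\tilde A)=\prst(\mathcal{S}_\tau\bfU\in A),
\end{equation*}
where the middle equality is the hypothesised stationarity on $X_1$ (Definition \ref{D1} with $q=1$) and the outer equalities use $\prst(\bfU\in X_q)=\prst(\mathcal{S}_\tau\bfU\in X_q)=1$. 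This is precisely stationarity of $\bfU$ on $X_q$ in the sense of Definition \ref{D1}.

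The main (mild) obstacle is the Borel comparison in the middle step: one must verify that the finer topology on $X_q$ does not introduce any new Borel events beyond those generated by the coarser $X_1$-topology on the set $X_q$. The lower-semicontinuity argument above (or, equivalently, the Lusin--Souslin theorem) resolves this point cleanly, after which the conclusion is essentially a bookkeeping exercise.
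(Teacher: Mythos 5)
Your proof is correct, but it takes a genuinely different route from the paper's. The paper argues by approximation: for $f\in C_b(L^q_{\rm loc})$ it composes $f$ with the truncation $\bfU\mapsto \bfU\,\mathbf{1}_{|\bfU|\leq R}$, observes that the composition lies in $C_b(L^1_{\rm loc})$, applies the $L^1_{\rm loc}$-stationarity to the truncated process, and then removes the truncation $R\to\infty$ by dominated convergence using $\bfU\in L^q_{\rm loc}$ a.s. Your argument is instead measure-theoretic: you show that the continuous injection $X_q\hookrightarrow X_1$ identifies $\mathcal B(X_q)$ with the trace $\mathcal B(X_1)\cap X_q$ (either via lower semicontinuity of the seminorms $N_M$ together with $X_q=\bigcap_M\{N_M<\infty\}\in\mathcal B(X_1)$, or more quickly via the Lusin--Souslin theorem), note that $X_q$ is shift-invariant and that $\prst(\bfU\in X_q)=\prst(\mathcal S_\tau\bfU\in X_q)=1$, and then transfer the equality of laws on $X_1$ to equality of laws on $X_q$ event by event. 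What each approach buys: your argument is cleaner at the logical level and sidesteps the somewhat delicate point in the paper's proof that the sharp truncation $\bfU\mapsto\bfU\,\mathbf{1}_{|\bfU|\leq R}$ be continuous on $L^1_{\rm loc}$ (which is not entirely obvious at the threshold set and is treated rather lightly there); on the other hand it invokes a nontrivial descriptive-set-theoretic input (Lusin--Souslin, or the equivalent hand-built Borel comparison), whereas the paper's truncation argument is self-contained and elementary. Both are valid and the underlying structural fact is the same: the additional information $\bfU\in X_q$ a.s. suffices to upgrade stationarity from the coarser to the finer path space.
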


\begin{proof}
According to the assumption, for all $f\in C_b(L^1_{\text{loc}}([0,\infty);W^{k,p}(\mt)))$, it holds
$$
\E[f(\bfU)]=\E[f(\mathcal{S}_\tau\bfU)].
$$
If $f\in C_b(L^q_{\text{loc}}([0,\infty);W^{k,p}(\mt)))$ then for all $R\in\mn$
$$
\bfU\mapsto f(\bfU\,\mathbf{1}_{|\bfU|\leq R})\in C_b(L^1_{\text{loc}}([0,\infty);W^{k,p}(\mt)))
$$
hence
$$
\E[f(\bfU\,\mathbf{1}_{|\bfU|\leq R})]=\E[f((\mathcal{S}_\tau\bfU)\mathbf{1}_{|\mathcal{S}_\tau\bfU|\leq R})].
$$
Finally, since $\bfU\in L^q_{\mathrm{loc}}([0,\infty);W^{k,p}(\mt))$ $\p$-a.s., we obtain that
$$
\bfU\,\mathbf{1}_{|\bfU|\leq R}\to\bfU\quad\text{in}\quad L^q_{\mathrm{loc}}([0,\infty);W^{k,p}(\mt))\quad\p\text{-a.s.}
$$
and we conclude by the dominated convergence.
\end{proof}

Next, we show that for the case of stochastic processes with  continuous trajectories, the two definitions are equivalent.

\begin{Lemma}\label{l:equivD12}
Let $k\in\mn_0$, $p\in[1,\infty)$. An $W^{k,p}(\mt)$-valued measurable stochastic process $\bfU$ with $\p$-a.s. continuous trajectories is stationary on $W^{k,p}(\mt)$ in the sense of Definition \ref{D2} if and only if it is stationary on $L^1_{\mathrm{loc}}([0,\infty);W^{k,p}(\mt))$ in the sense of Definition \ref{D1}.
\end{Lemma}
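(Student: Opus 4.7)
The plan is to establish both implications separately, using the $\p$-a.s.\ continuity of trajectories as the bridge between Definition \ref{D2} (fixed-time finite-dimensional laws) and Definition \ref{D1} (law on an $L^1_{\text{loc}}$ path space).

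For the forward direction ($\Rightarrow$), I would argue that since $W^{k,p}(\mt)$ is a separable Banach space, $C([0,T]; W^{k,p}(\mt))$ is a Polish space for each $T > 0$, on which the law of a continuous process is uniquely determined by its finite-dimensional distributions. The hypothesis of Definition \ref{D2} gives that $\bfU$ and $\mathcal{S}_\tau \bfU$ share all finite-dimensional distributions, so their laws on $C([0,T]; W^{k,p}(\mt))$ coincide for every $T$. Pushing forward through the continuous inclusion $C([0,T]; W^{k,p}(\mt)) \hookrightarrow L^1(0,T; W^{k,p}(\mt))$, the laws on each $L^1(0,T; W^{k,p}(\mt))$ coincide, whence the laws on $L^1_{\text{loc}}([0,\infty); W^{k,p}(\mt))$ coincide.

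The reverse direction ($\Leftarrow$) is the substantive one and calls for recovering fixed-time values from the $L^1_{\text{loc}}$-equivalence class. The key device is the family of continuous linear averaging maps
\[
\Phi_{t,h} \colon L^1_{\text{loc}}([0,\infty); W^{k,p}(\mt)) \to W^{k,p}(\mt), \qquad \Phi_{t,h}(\bfV) := \frac{1}{h} \int_t^{t+h} \bfV(s)\, \dif s,
\]
defined for $t \geq 0$, $h > 0$. A direct change of variables gives the intertwining identity $\Phi_{t,h} \circ \mathcal{S}_\tau = \Phi_{t+\tau, h}$, and for any trajectory that is continuous into $W^{k,p}(\mt)$, $\Phi_{t,h}(\bfV) \to \bfV(t)$ as $h \to 0^+$ by the mean value property of continuous vector-valued functions.

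With these maps in hand, for fixed $n \in \mn$, $t_1, \ldots, t_n \in [0,\infty)$ and $\tau \geq 0$, I would consider the $[W^{k,p}(\mt)]^n$-valued random vectors
\[
\Psi^h_0 := \bigl(\Phi_{t_1,h}(\bfU), \ldots, \Phi_{t_n,h}(\bfU)\bigr), \qquad \Psi^h_\tau := \bigl(\Phi_{t_1,h}(\mathcal{S}_\tau \bfU), \ldots, \Phi_{t_n,h}(\mathcal{S}_\tau \bfU)\bigr).
\]
Stationarity in the sense of Definition \ref{D1}, together with continuity of $\bfV \mapsto (\Phi_{t_1,h}(\bfV), \ldots, \Phi_{t_n,h}(\bfV))$, yields that $\Psi^h_0$ and $\Psi^h_\tau$ have identical laws on $[W^{k,p}(\mt)]^n$. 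The $\p$-a.s.\ continuity of $\bfU$ provides the a.s.\ limits $\Psi^h_0 \to (\bfU(t_1), \ldots, \bfU(t_n))$ and $\Psi^h_\tau \to (\bfU(t_1+\tau), \ldots, \bfU(t_n+\tau))$ as $h \to 0^+$; passing to the limit in the equality of laws then delivers Definition \ref{D2}. The only delicate point is the identification of pointwise values from $L^1_{\text{loc}}$-representatives, which is precisely what the averaging device resolves under the continuity hypothesis.
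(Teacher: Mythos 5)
Your proof is correct. For the implication from Definition~\ref{D1} to Definition~\ref{D2} your argument is essentially the one in the paper: you recover fixed-time evaluations as limits of continuous averaging functionals on $L^1_{\mathrm{loc}}$ --- your one-sided means $\frac{1}{h}\int_t^{t+h}$ play the role of the paper's smooth convolutions $\int_0^\infty \bfU(s)\psi_m(t_i-s)\,\dif s$ --- and then let the averaging window shrink, using $\p$-a.s.\ continuity of trajectories to pass to the limit in the equality of laws. For the converse implication, however, your route is genuinely different. The paper argues by hand: for each $m$ it replaces $\bfU$ by the piecewise-constant interpolant $\tilde\bfU_m$ along an equidistant partition of mesh $\tau/m$, observes that the passage from the sequence of node values to the step function is an isometry of $\ell^1_{\mathrm{loc}}$ into $L^1_{\mathrm{loc}}$, deduces equality of the laws of $\tilde\bfU_m$ and $\mathcal{S}_\tau\tilde\bfU_m$ from Definition~\ref{D2}, and finally lets $m\to\infty$ using continuity of trajectories. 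You instead invoke the standard measure-theoretic fact that for a Polish space $E$ the Borel $\sigma$-algebra on $C([0,T];E)$ is generated by the coordinate evaluations, so the law of an a.s.\ continuous process is determined by its finite-dimensional distributions; you then obtain coincidence of the laws of $\bfU$ and $\mathcal{S}_\tau\bfU$ on each $C([0,T];W^{k,p}(\mt))$ and push them forward through the continuous embeddings into $L^1(0,T;W^{k,p}(\mt))$ and $L^1_{\mathrm{loc}}([0,\infty);W^{k,p}(\mt))$. Your version is shorter and delegates the core step to a textbook uniqueness theorem, while the paper's is more elementary and self-contained. The one step you leave implicit is that coincidence of the laws on every $L^1(0,T;W^{k,p}(\mt))$ identifies the law on $L^1_{\mathrm{loc}}([0,\infty);W^{k,p}(\mt))$; this is immediate because the restriction maps generate the Fr\'echet topology and hence its Borel $\sigma$-algebra, but it deserves a sentence.
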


\begin{proof}
Let us first show that Definition \ref{D1} implies Definition \ref{D2}. Let $\tau\geq0$ and $t_1,\dots,t_n\in [0,\infty)$. Let $(\psi_m)$ be a {smooth and compactly supported} approximation to the identity on $\R$ and define
$$\Psi_m(\bfU)= \left(\int_0^\infty \bfU(s)\psi_m(t_1-s)\dif s,\dots, \int_0^\infty \bfU(s)\psi_m(t_n-s)\dif s\right).$$
If $\varphi\in C_b([W^{k,p}(\mt)]^n)$ then $\varphi\circ\Psi_m\in C_b(L^1_{\text{loc}}([0,\infty);W^{k,p}(\mt)))$ and therefore
$$
\E[\varphi\circ\Psi_m(\mathcal{S}_\tau\bfU)]=\E[\varphi\circ\Psi_m(\bfU)].
$$
Sending $m\to\infty$ we obtain due to the continuity of $\bfU$ and the dominated convergence theorem that
$$
\E[\varphi(\bfU(t_1+\tau),\dots,\bfU(t_n+\tau))]=\E[\varphi(\bfU(t_1),\dots,\bfU(t_n))]
$$
and the claim follows.

To show the converse implication, let us fix $\tau\geq0$ and an equidistant partition $0=t_1<\cdots<t_n<\cdots<\infty$ with mesh size $\Delta t=\frac{\tau}{m}$ for some $m\in\mn$. Observe that there is an one-to-one correspondence between sequences $\hat\bfU_m=(\bfU(t_1),\bfU(t_2),\dots)\in \ell^1_{\rm loc}(W^{k,p}(\mt))$  and piecewise constant functions in $L^1_{\rm loc}([0,\infty);W^{k,p}(\mt))$
given by $\tilde\bfU_m(t)=\bfU(t_i)$ if $t\in[t_i,t_{i+1})$. Moreover, it is an isometry in the following sense
$$
\sum_{i=1}^N\|\hat\bfU_m(t_i)\|_{W^{k,p}(\mt)}=\int_0^{N\Delta t}\|\tilde \bfU_m(t)\|_{W^{k,p}(\mt)}\,\dif t.
$$
Thus, if $\Phi$ denotes this isometry and
$\varphi\in C_b (L^1_{\rm loc}([0,\infty);W^{k,p}(\mt)))$, then $\varphi\circ\Phi\in C_b(\ell^1_{\rm loc}(W^{k,p}(\mt)))$. Consequently,
$$
\E[\varphi(\tilde\bfU_m)]=\E[\varphi(\mathcal{S}_\tau\tilde\bfU_m)]
$$
follows from Definition \ref{D2}.
Due to the continuity of $\bfU$ we may send $m\to\infty$ which completes the proof.
\end{proof}

The following result proves that weak continuity together with a uniform bound is enough for the equivalence of Definition \ref{D2} and Definition \ref{D1} to hold true.

\begin{Corollary}\label{l:equivD123}
The statement of Lemma \ref{l:equivD12} remains valid if the trajectories of $\bfU$ are $\p$-a.s. weakly continuous and for all $T>0$
\begin{equation}\label{eq:gh}
\sup_{t\in[0,T]}\|\bfU\|_{W^{k,p}(\mt)}<\infty\quad\p\text{-a.s.}
\end{equation}
\end{Corollary}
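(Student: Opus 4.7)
The plan is to mimic the proof of Lemma \ref{l:equivD12}, replacing pointwise strong arguments by weak ones, using the uniform bound \eqref{eq:gh} and a time mollification to compensate for the loss of strong continuity. Both directions of the equivalence need to be re-examined separately.

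For the implication Definition \ref{D1} $\Rightarrow$ Definition \ref{D2}, since $W^{k,p}(\mt)$ is a separable Banach space, it suffices to check agreement of characteristic functions of the joint laws. Concretely, for any $\tau\geq 0$, $t_1,\ldots,t_n\in[0,\infty)$ and $\phi_1,\ldots,\phi_n\in(W^{k,p}(\mt))^{*}$, I would form the mollified approximants $\Psi_m(\bfU)_j=\int_0^\infty\bfU(s)\psi_m(t_j-s)\,\dif s$ as in Lemma \ref{l:equivD12} and consider
$$
\Xi_m(\bfU):=\exp\Bigl(\mathrm{i}\sum_{j=1}^n \bigl\langle\phi_j,\Psi_m(\bfU)_j\bigr\rangle\Bigr).
$$
This is bounded and continuous on $L^1_{\mathrm{loc}}([0,\infty);W^{k,p}(\mt))$, so by Definition \ref{D1} one has $\E[\Xi_m(\bfU)]=\E[\Xi_m(\mathcal S_\tau\bfU)]$. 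Sending $m\to\infty$, the weak continuity of $\bfU$ implies $\Psi_m(\bfU)_j\to\bfU(t_j)$ weakly in $W^{k,p}(\mt)$, and hence the exponentials converge pointwise; combined with boundedness this transfers the shift invariance to the characteristic function of $(\bfU(t_1+\tau),\ldots,\bfU(t_n+\tau))$.

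For the converse Definition \ref{D2} $\Rightarrow$ Definition \ref{D1}, the piecewise-constant approximation of Lemma \ref{l:equivD12} fails because it need not converge strongly in $L^1_{\mathrm{loc}}([0,\infty);W^{k,p}(\mt))$ under weak continuity alone. I would replace it by a time mollification $\bfU_m(t)=\int_0^\infty\bfU(s)\rho_m(t-s)\,\dif s$ with a smooth compactly supported approximation to the identity $\rho_m$. Each $\bfU_m$ has smooth $W^{k,p}(\mt)$-valued trajectories, and its finite-dimensional distributions are continuous images of those of $\bfU$, so Definition \ref{D2} stationarity transfers from $\bfU$ to $\bfU_m$. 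Applying Lemma \ref{l:equivD12} to $\bfU_m$ yields Definition \ref{D1} stationarity for $\bfU_m$.

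The remaining step is to send $m\to\infty$ in the stationarity identity for $\bfU_m$, which is where I expect the main obstacle to lie. Weak continuity together with separability of $W^{k,p}(\mt)$ ensures, via Pettis' theorem, that $\bfU$ is strongly measurable, and the uniform bound \eqref{eq:gh} places it in $L^\infty_{\mathrm{loc}}([0,\infty);W^{k,p}(\mt))\subset L^1_{\mathrm{loc}}([0,\infty);W^{k,p}(\mt))$ $\prst$-a.s. Standard Lebesgue differentiation then yields $\bfU_m\to\bfU$ in $L^1_{\mathrm{loc}}([0,\infty);W^{k,p}(\mt))$ $\prst$-a.s., and for any bounded continuous test function on this path space the dominated convergence theorem (applied with the envelope furnished by \eqref{eq:gh}) transfers stationarity from $\bfU_m$ to $\bfU$. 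The uniform bound \eqref{eq:gh} plays a double role here, simultaneously securing Bochner integrability and controlling the passage to the limit.
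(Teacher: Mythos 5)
Your D1 $\Rightarrow$ D2 direction (time mollification tested against characteristic functions) is correct, and it is a mild variant of the paper's argument; in particular you correctly note that weak continuity suffices to get $\Psi_m(\bfU)_j\rightharpoonup\bfU(t_j)$ and that the complex exponentials then converge pointwise. The gap is in the converse direction. You use the \emph{time} mollification $\bfU_m(t)=\int_0^\infty\bfU(s)\rho_m(t-s)\,\dif s$ and assert that ``its finite-dimensional distributions are continuous images of those of $\bfU$, so Definition \ref{D2} stationarity transfers.'' This is not correct as stated: $\bfU_m(t_j)$ is a Bochner integral over a time interval, so it depends on the whole trajectory of $\bfU$ in a neighbourhood of $t_j$, not merely on the single random variable $\bfU(t_j)$. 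Definition \ref{D2} only gives shift-invariance of the finite-dimensional marginals of $\bfU$; to propagate it to $\bfU_m$ you would need an extra argument, for instance approximating the Bochner integral by Riemann sums (which \emph{are} continuous functions of finitely many $\bfU(r_i)$) and passing to the limit at the level of characteristic functions, with some care because Riemann sums of a merely weakly continuous integrand only converge weakly. This is the genuine obstacle in your approach, not the final limit $m\to\infty$ that you flagged; that last step (via Lebesgue points and dominated convergence) is the easy part.

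The paper's proof avoids the issue entirely by using a \emph{spatial} mollification $\bfU^\varepsilon=\bfU*\varphi_\varepsilon$ with $\varphi_\varepsilon$ an approximation to the identity on $\mt$. Because this acts pointwise in time, the map $\bfU(t_j)\mapsto\bfU^\varepsilon(t_j)$ is a fixed continuous function on $W^{k,p}(\mt)$ applied coordinatewise, so transfer of D2-stationarity to $\bfU^\varepsilon$ is immediate; at the same time, compactness of the convolution operator on $W^{k,p}(\mt)$ together with the uniform bound \eqref{eq:gh} upgrades weak continuity of $\bfU$ to strong continuity of $\bfU^\varepsilon$, which is exactly the hypothesis needed to invoke Lemma \ref{l:equivD12}. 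If you insist on time mollification, you will need to supply the Riemann-sum/cylinder-$\sigma$-algebra argument sketched above; otherwise, switch to spatial mollification as in the paper.
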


\begin{proof}
Let $(\varphi_\varepsilon)$ be an approximation to the identity on $\mt$. Since $\bfU$ has weakly continuous trajectories in $W^{k,p}(\mt)$ and satisfies \eqref{eq:gh}, the process $\bfU^\varepsilon:=\bfU*\varphi_\varepsilon$ has strongly continuous trajectories in $W^{k,p}(\mt)$. Hence the equivalence of the two notions of stationarity from Lemma \ref{l:equivD12} holds.

Now, let $\bfU$ be stationary on $L^1_{\text{loc}}([0,\infty);W^{k,p}(\mt))$ in the sense of Definition \ref{D1}. That is, for every $f\in C_b(L^1_{\text{loc}}([0,\infty);W^{k,p}(\mt)))$ we have
$$\E[f(\mathcal{S}_\tau\bfU)]=\E[f(\bfU)].$$
Since $\bfU\mapsto f(\bfU*\varphi_\varepsilon)$ also belongs to $C_b(L^1_{\text{loc}}([0,\infty);W^{k,p}(\mt)))$ we deduce that
$$\E[f(\bfU^\varepsilon)]=\E[f([\mathcal{S}_\tau\bfU]*\varphi_\varepsilon)]=\E[f(\mathcal{S}_\tau \bfU^\varepsilon)].$$
So, $\bfU^\varepsilon$ is stationary in the sense of Definition \ref{D1} and due to Lemma \ref{l:equivD12}, $\bfU^\varepsilon$ is stationary in the sense of Definition \ref{D2}. In addition, $\bfU^\varepsilon(t)\to\bfU(t)$ strongly in $W^{k,p}(\mt)$ for every $t\in[0,\infty)$. Therefore, if $g\in C_b([W^{k,p}(\mt)]^n)$, we may use dominated convergence in order to pass to the limit in  expressions of the form
$$\E[g(\bfU^\varepsilon(t_1),\dots, \bfU^\varepsilon(t_n))]=\E[g(\bfU^\varepsilon(t_1+\tau),\dots, \bfU^\varepsilon(t_n+\tau))].$$
Stationarity of $\bfU$ in the sense of Definition \ref{D2} follows.

To show the converse implication, assume that $\bfU$ is stationary in the sense of Definition \ref{D2}. By the same argument as above, it follows that $\bfU^\varepsilon$ is stationary in the sense of Definition \ref{D2} hence stationary in the sense of Definition \ref{D1}. In other words, for every $f\in C_b(L^1_{\text{loc}}([0,\infty);W^{k,p}(\mt)))$,
$$\E[f(\bfU^\varepsilon)]=\E[f(\mathcal{S}_\tau\bfU^\varepsilon)].$$
According to \eqref{eq:gh} we obtain that $\bfU^\varepsilon\to \bfU$ in $L^1_{\text{loc}}([0,\infty);W^{k,p}(\mt))$ and the dominated convergence theorem yields the claim.
\end{proof}

As the next step, we show that both notions of stationarity introduced in Definition \ref{D2} and Definition~\ref{D1} are stable under weak convergence.

\begin{Lemma}\label{lem:stac}
Let $k\in\mn_0, p,q\in[1,\infty)$ and let $(\bfU_m)$ be a sequence of random variables taking values in $L^q_{\rm loc}([0,\infty);W^{k,p}(\mt)))$. If, for all $m\in\mn$, $\bfU_m$ is stationary on $L^q_{\rm loc}([0,\infty);W^{k,p}(\mt))$ in the sense of Definition \ref{D1} and
\begin{align*}
\bfU_m\rightharpoonup \bfU\quad\text{in}\quad L^q_{\mathrm{loc}}([0,\infty);W^{k,p}(\mt))\quad\mathbb P\text{-a.s.,}
\end{align*}
then $\bfU$ is stationary on $L^q_{\rm loc}([0,\infty);W^{k,p}(\mt))$.
\end{Lemma}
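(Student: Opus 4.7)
The plan is to verify that the laws $\mathcal{L}(\mathcal{S}_\tau \bfU)$ and $\mathcal{L}(\bfU)$ on $L^q_{\rm loc}([0,\infty);W^{k,p}(\mt))$ coincide for every $\tau\geq 0$, by passing to the limit in the analogous identity already available for $\bfU_m$. First I would observe that the time shift $\mathcal{S}_\tau$ is a bounded linear operator on $L^q_{\rm loc}([0,\infty);W^{k,p}(\mt))$, continuous in both the strong and the weak topologies. Consequently, the $\mathbb{P}$-a.s.\ weak convergence $\bfU_m\rightharpoonup \bfU$ propagates to $\mathcal{S}_\tau \bfU_m\rightharpoonup \mathcal{S}_\tau \bfU$ $\mathbb{P}$-a.s.

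Next I would test the two laws against a separating family of cylindrical, bounded continuous functionals. For $n,M\in\mn$, test functions $g_1,\dots,g_n\in L^{q'}(0,M;W^{-k,p'}(\mt))$, and $\phi\in C_b(\R^n)$, set
$$\Phi(\bfv):=\phi\Bigl(\langle \bfv,g_1\rangle_M,\dots,\langle \bfv,g_n\rangle_M\Bigr),\qquad \langle \bfv,g\rangle_M:=\int_0^M\langle \bfv(t),g(t)\rangle\,\dif t.$$
Each such $\Phi$ is bounded and strongly continuous on $L^q_{\rm loc}([0,\infty);W^{k,p}(\mt))$, so stationarity of $\bfU_m$ in the sense of Definition \ref{D1} provides $\E[\Phi(\bfU_m)]=\E[\Phi(\mathcal{S}_\tau\bfU_m)]$. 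Crucially, $\Phi$ is also sequentially continuous with respect to the weak topology, because the maps $\bfv\mapsto \langle \bfv,g_i\rangle_M$ are continuous linear functionals. Combining this with the pathwise weak convergence and the bounded (dominated) convergence theorem yields
$$\E[\Phi(\bfU_m)]\to\E[\Phi(\bfU)],\qquad \E[\Phi(\mathcal{S}_\tau\bfU_m)]\to \E[\Phi(\mathcal{S}_\tau\bfU)],$$
hence $\E[\Phi(\bfU)]=\E[\Phi(\mathcal{S}_\tau\bfU)]$.

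To conclude I would use that the class of functionals $\Phi$ constructed above determines the law on $L^q_{\rm loc}([0,\infty);W^{k,p}(\mt))$. Because this space is a separable, reflexive Fr\'echet space, its topological dual separates points through a countable sub-family; for instance, the $g_i$ may be drawn from a countable dense set in each $L^{q'}(0,M;W^{-k,p'}(\mt))$, $M\in\mn$. A standard monotone class / $\pi$--$\lambda$ argument then shows that the cylindrical $\sigma$-algebra generated by these linear functionals coincides with the Borel $\sigma$-algebra, so the equality of expectations above implies $\mathcal{L}(\bfU)=\mathcal{L}(\mathcal{S}_\tau\bfU)$. Equivalently, one may argue at the level of characteristic functionals $g\mapsto \E[\exp(i\langle g,\bfU\rangle)]$.

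The main obstacle is really of a bookkeeping nature rather than deep analysis: one must ensure that the weak topology is well-adapted to the Polish structure underlying $L^q_{\rm loc}([0,\infty);W^{k,p}(\mt))$, so that finite-dimensional distributions of continuous linear functionals determine the full law. This is classical (cf.\ the theory of Radon measures on locally convex spaces), but it requires a short verification that the Borel and cylindrical $\sigma$-algebras agree, which is ultimately the reason why the passage to the limit from sequential weak continuity of $\Phi$ suffices to recover equality of the two measures.
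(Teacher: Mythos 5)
Your proof is correct, but it follows a genuinely different route from the paper's. The paper's argument proceeds in two stages: first it observes that the stationarity identity $\E[g(\mathcal{S}_\tau\bfU)]=\E[g(\bfU)]$ holds for every bounded \emph{weakly} continuous functional $g$ (using weak continuity of $\mathcal{S}_\tau$ and dominated convergence along $\bfU_m\rightharpoonup\bfU$); second, it upgrades this to arbitrary bounded \emph{strongly} continuous $f$ by mollification: since convolution with a smooth kernel $\varphi_\varepsilon$ in $(t,x)$ is a compact operator, $\bfV\mapsto f(\bfV*\varphi_\varepsilon)$ is weakly continuous, so the first stage applies and one then sends $\varepsilon\to0$. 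You bypass the mollification step entirely: you test only against cylindrical functionals $\Phi(\bfV)=\phi(\langle\bfV,g_1\rangle_M,\dots,\langle\bfV,g_n\rangle_M)$, which are simultaneously weakly and strongly continuous, and then invoke a $\pi$--$\lambda$ / monotone class argument to conclude that agreement of expectations of all such $\Phi$ pins down the law, because on a separable Fr\'echet space the cylindrical $\sigma$-algebra generated by a countable separating family of continuous linear functionals coincides with the Borel $\sigma$-algebra. Your route is more elementary in that it avoids verifying compactness of the mollification operator and its (implicit) commutation with the time shift, at the cost of invoking the classical Borel--cylindrical $\sigma$-algebra identification. One small inaccuracy: $L^q_{\mathrm{loc}}([0,\infty);W^{k,p}(\mt))$ need not be reflexive (e.g.\ $p=1$ or $q=1$), but reflexivity is not actually used --- separability together with the Hausdorff locally convex structure (so the dual separates points by Hahn--Banach) is all your argument needs.
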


\begin{proof}
Stationarity of $\bfU_m$ implies that for every $f\in C_b(L^q_{\rm loc}([0,\infty);W^{k,p}(\mt)))$ and every $\tau\geq0$
\begin{equation}\label{eq:str}
\E[ f(\mathcal{S}_\tau\bfU_m)]=\E[  f(\bfU_m)].
\end{equation}
Moreover, it follows from the above weak convergence and the weak continuity of
$$\mathcal{S}_\tau:L^q_{\rm loc}([0,\infty);W^{k,p}(\mt)))\to L^q_{\rm loc}([0,\infty);W^{k,p}(\mt)))$$
that for every $g\in C_b((L^q_{\rm loc}([0,\infty);W^{k,p}(\mt)),{w}))$
it holds
$$g(\mathcal{S}_\tau\bfU_m)\to g(\mathcal{S}_\tau\bfU),\qquad g(\bfU_m)\to g(\bfU).$$
In particular, since  every weakly continuous function is strongly continuous hence \eqref{eq:str} holds with $f$ replaced by $g$, we deduce by the dominated convergence theorem that
$$\E[ g(\mathcal{S}_\tau\bfU)]=\E[  g(\bfU)].$$
Now, it remains to verify the corresponding expression for a general strongly continuous function $f\in C_b(L^q_{\rm loc}([0,\infty);W^{k,p}(\mt)))$. To this end, let $(\varphi_\varepsilon)$ be a smooth approximation to the identity on $\R\times\mt$. Since convolution with $\varphi_\varepsilon$ is a compact operator on $L^q_{\rm loc}([0,\infty);W^{k,p}(\mt))$, we obtain that 
$$\bfU\mapsto f(\bfU*\varphi_\varepsilon)=:f(\bfU^\varepsilon)\in C_b((L^q_{\rm loc}([0,\infty);W^{k,p}(\mt)),{w}))$$
and consequently
$$\E[  f(\bfU^\varepsilon)]=\E[ f([\mathcal{S}_\tau\bfU]*\varphi_\varepsilon)]=\E[f(\mathcal{S}_\tau\bfU^\varepsilon)],$$
hence $\bfU^\varepsilon$ is stationary.
Since
$$\bfU^\varepsilon\to\bfU\quad\text{in}\quad L^q_{\mathrm{loc}}([0,\infty);W^{k,p}(\mt))\quad\mathbb P\text{-a.s.,} $$
we may pass to the limit $\varepsilon\to0$ and conclude using the dominated convergence theorem.
\end{proof}

\begin{Lemma}\label{lem:stac2}
Let $k\in\mn_0$, $p\in[1,\infty)$ and let $(\bfU_m)$ be a sequence of $W^{k,p}(\mt)$-valued  stochastic processes which are stationary on $W^{k,p}(\mt)$ in the sense of Definition \ref{D2}. If for all $T>0$
\begin{equation}\label{eq:gh1}
\sup_{m\in\mn}\expe{\sup_{t\in[0,T]}\|\bfU_m\|_{W^{k,p}(\mt)}}<\infty
\end{equation}
and
\begin{align*}
\bfU_m\rightarrow \bfU\quad\text{in}\quad C_{\mathrm{loc}}([0,\infty);(W^{k,p}(\mt),w))\quad\mathbb P\text{-a.s.,}
\end{align*}
then $\bfU$ is stationary on $W^{k,p}(\mt)$.
\end{Lemma}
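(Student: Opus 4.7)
The plan is to reduce Definition \ref{D2} for $\bfU$ to the corresponding statement for spatially mollified processes, for which strong pointwise-in-time convergence is available. Fix $\tau \geq 0$, $t_1,\dots,t_n \in [0,\infty)$, and $g \in C_b([W^{k,p}(\mt)]^n)$; the goal is to prove
\[
\E[g(\bfU(t_1),\dots,\bfU(t_n))] = \E[g(\bfU(t_1+\tau),\dots,\bfU(t_n+\tau))].
\]

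First, I would introduce a smooth spatial mollifier $(\varphi_\varepsilon)$ on $\mt$ and define $\bfU_m^\varepsilon(t) := \bfU_m(t) \ast \varphi_\varepsilon$ and $\bfU^\varepsilon(t) := \bfU(t) \ast \varphi_\varepsilon$. Since spatial convolution commutes with the time shift $\mathcal{S}_\tau$ and is a bounded linear map on $W^{k,p}(\mt)$, the process $\bfU_m^\varepsilon$ inherits stationarity in the sense of Definition \ref{D2} from $\bfU_m$.

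Next, I would verify that for each fixed $t \geq 0$, $\bfU_m^\varepsilon(t) \to \bfU^\varepsilon(t)$ strongly in $W^{k,p}(\mt)$ $\p$-a.s. The assumed convergence in $C_{\mathrm{loc}}([0,\infty);(W^{k,p}(\mt),w))$ yields $\bfU_m(t) \rightharpoonup \bfU(t)$ in $W^{k,p}(\mt)$ $\p$-a.s., together with the uniform bound $\sup_m \|\bfU_m(t)\|_{W^{k,p}(\mt)} < \infty$ $\p$-a.s.\ (also guaranteed by \eqref{eq:gh1}). Consequently, $\bfU_m^\varepsilon(t)(x) \to \bfU^\varepsilon(t)(x)$ pointwise in $x \in \mt$, and an Arzelà--Ascoli argument (using that $\varphi_\varepsilon$ has bounded derivatives of all orders) upgrades this to strong convergence in $C^{k+j}(\mt) \hookrightarrow W^{k,p}(\mt)$ for every $j \in \mn_0$.

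With these ingredients at hand, I would apply the dominated convergence theorem to the stationarity identity
\[
\E[g(\bfU_m^\varepsilon(t_1),\dots,\bfU_m^\varepsilon(t_n))] = \E[g(\bfU_m^\varepsilon(t_1+\tau),\dots,\bfU_m^\varepsilon(t_n+\tau))],
\]
using boundedness of $g$ as the dominating function, to obtain the same identity for $\bfU^\varepsilon$. Finally, since $\bfU^\varepsilon(t) \to \bfU(t)$ strongly in $W^{k,p}(\mt)$ as $\varepsilon \to 0$ (a standard property of mollifiers on $W^{k,p}$ spaces for $p \in [1,\infty)$, applicable $\p$-a.s.\ because $\bfU(t) \in W^{k,p}(\mt)$), a further application of dominated convergence yields the desired identity for $\bfU$.

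The main technical point is establishing the strong $W^{k,p}$-convergence $\bfU_m^\varepsilon(t) \to \bfU^\varepsilon(t)$ from merely weak pointwise-in-time convergence; this is where the uniform bound \eqref{eq:gh1} combined with the smoothing effect of $\varphi_\varepsilon$ plays the decisive role. The remainder of the proof is a standard double-limit procedure, analogous to (and somewhat simpler than) the argument employed in Lemma \ref{lem:stac}.
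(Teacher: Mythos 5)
Your proof is correct, but it takes a genuinely different route from the paper. The paper proves Lemma \ref{lem:stac2} by chaining two earlier results: it uses \eqref{eq:gh1} to check the hypotheses of Corollary \ref{l:equivD123}, which converts stationarity in the sense of Definition \ref{D2} into stationarity in the sense of Definition \ref{D1} (on $L^1_{\rm loc}$); then it applies Lemma \ref{lem:stac} to pass to the limit in the $L^1_{\rm loc}$ framework; and finally it converts back to Definition \ref{D2} via Corollary \ref{l:equivD123} again, using lower semicontinuity of the norm to verify \eqref{eq:gh} for $\bfU$. Your argument bypasses the detour through Definition \ref{D1} entirely: you mollify in space, observe that spatial convolution is a bounded linear map commuting with time shifts so that $\bfU_m^\varepsilon$ inherits stationarity, use the pointwise-in-$t$ weak convergence (together with Banach--Steinhaus and Arzel\`a--Ascoli after smoothing) to get strong $W^{k,p}$-convergence $\bfU_m^\varepsilon(t)\to\bfU^\varepsilon(t)$, and then pass to the limit first in $m$ and then in $\varepsilon$ by dominated convergence. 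This is more self-contained and, in fact, does not really use \eqref{eq:gh1}: the $\p$-a.s. pointwise bound $\sup_m\|\bfU_m(t)\|_{W^{k,p}(\mt)}<\infty$ that you need follows already from the assumed weak convergence via the uniform boundedness principle, so crediting it to \eqref{eq:gh1} is a harmless inaccuracy. The paper's approach buys modularity (it reuses Lemma \ref{lem:stac}, which is needed anyway for the $L^q_{\rm loc}$ processes like $\vu$); your approach buys a shorter, single-step argument. Both the paper's Lemma \ref{lem:stac} and Corollary \ref{l:equivD123} are proved with the same mollification-plus-dominated-convergence technique you employ, so at the level of underlying ideas the two proofs coincide, but the packaging is different.
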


\begin{proof}
The claim is a consequence of Corollary \ref{l:equivD123} and Lemma \ref{lem:stac}. Indeed, as a consequence of \eqref{eq:gh1} we deduce that
$$\expe{\sup_{t\in[0,T]}\|\bfU_m\|_{W^{k,p}(\mt)}}<\infty$$
thus $\bfU_m$ satisfies the assumptions of Corollary \ref{l:equivD123} and the same is true for $\bfU$ due to lower semicontinuity of the corresponding norm. Accordingly, $\bfU_m$ satisfy the assumptions of Lemma \ref{lem:stac} which implies that $\bfU$ is stationary in the sense of Definition \ref{D1}. Corollary \ref{l:equivD123} then yields the claim.
\end{proof}

Let us conclude with a simple observation that stationarity is preserved under composition with measurable functions.

\begin{Corollary}\label{cor:00}
Let $k\in\mn_0$, $p\in[1,\infty)$. Let the stochastic process $\bfU$ be stationary on $W^{k,p}(\mt)$ in the sense of Definition \ref{D2}. Then for every measurable function $F:W^{k,p}(\mt)\to\R$, the stochastic process $F(\bfU)$ is stationary on $\R$.
\end{Corollary}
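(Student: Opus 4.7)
The plan is to deduce the stationarity of $F(\bfU)$ directly from the definition by unfolding the joint laws and using the functorial behavior of pushforward measures under the coordinatewise application of $F$.

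More precisely, fix $\tau\geq 0$, $n\in\mn$ and $t_1,\dots,t_n\in[0,\infty)$. Consider the measurable map
\[
F^{\otimes n}:[W^{k,p}(\mt)]^n\to\R^n,\qquad (v_1,\dots,v_n)\mapsto (F(v_1),\dots,F(v_n)).
\]
By the transformation formula, the law of $(F(\bfU(t_1+\tau)),\dots,F(\bfU(t_n+\tau)))$ on $\R^n$ is the pushforward of $\mathcal L(\bfU(t_1+\tau),\dots,\bfU(t_n+\tau))$ under $F^{\otimes n}$, and similarly for the unshifted tuple. Since $\bfU$ is stationary in the sense of Definition \ref{D2}, these two joint laws on $[W^{k,p}(\mt)]^n$ coincide, hence so do their pushforwards under $F^{\otimes n}$. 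This gives
\[
\mathcal L\bigl(F(\bfU(t_1+\tau)),\dots,F(\bfU(t_n+\tau))\bigr)=\mathcal L\bigl(F(\bfU(t_1)),\dots,F(\bfU(t_n))\bigr),
\]
which is precisely stationarity of $F(\bfU)$ on $\R$ in the sense of Definition \ref{D2}.

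There is essentially no obstacle: the only thing to check is the measurability of $F^{\otimes n}$, which follows immediately from the measurability of $F$ and the fact that finite products of measurable maps between Borel spaces are Borel measurable (the target $\R^n$ is endowed with the product Borel $\sigma$-algebra, which coincides with the Borel $\sigma$-algebra for the product topology). The proof is thus reduced to a one-line pushforward argument, so the statement is essentially a formal consequence of Definition \ref{D2}.
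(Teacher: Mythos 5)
Your argument is correct and is exactly the paper's (one-line) proof, just spelled out: both rely on the equality of joint finite-dimensional laws of $\bfU$ from Definition \ref{D2} being preserved under pushforward by the coordinatewise measurable map $F^{\otimes n}$. Nothing to add.
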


\begin{proof}
The proof follows immediately from the corresponding equality of joint laws of $(\bfU(t_1),\dots, \bfU(t_n))$ and $(\bfU(t_1+\tau),\dots, \bfU(t_n+\tau))$.
\end{proof}

\begin{Corollary}\label{cor:001}
Let $k\in\mn_0$, $p,q\in[1,\infty)$. Let $\bfU$ be stationary on $L^q_{\mathrm{loc}}([0,\infty);W^{k,p}(\mt))$ in the sense of Definition \ref{D1}. Then for every measurable function $F:W^{k,p}(\mt)\to\R$  and a.e. $s,t\in [0,\infty)$, the laws of
$\bfU(s)$ and $\bfU(t)$ on $W^{k,p}(\mt)$ coincide.
\end{Corollary}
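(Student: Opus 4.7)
The plan is to reduce the claim to the setting of stochastic processes with continuous trajectories, where Lemma \ref{l:equivD12} and Corollary \ref{cor:00} already deliver time-invariance of one-dimensional marginals. The bridge is a standard mollification of $\bfU$ in time.

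Concretely, I will pick a smooth approximation to the identity $(\psi_\varepsilon)_{\varepsilon>0}$ on $\R$ with $\mathrm{supp}\,\psi_\varepsilon\subset[-\varepsilon,0]$ and define
\[
\bfU^\varepsilon(t) := \int_0^\infty \bfU(s)\,\psi_\varepsilon(t-s)\,\dif s, \qquad t\geq 0.
\]
Then $\bfU^\varepsilon$ has $\p$-a.s.\ continuous trajectories in $W^{k,p}(\mt)$. A direct change of variables gives $(\mathcal{S}_\tau\bfU)^\varepsilon=\mathcal{S}_\tau\bfU^\varepsilon$ for every $\tau\geq 0$, and since convolution with $\psi_\varepsilon$ defines a Borel measurable map from $L^q_{\mathrm{loc}}([0,\infty);W^{k,p}(\mt))$ into $C_{\mathrm{loc}}([0,\infty);W^{k,p}(\mt))$, stationarity of $\bfU$ in the sense of Definition \ref{D1} transfers to $\bfU^\varepsilon$ along the same lines as in the proof of Lemma \ref{lem:stac}. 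Lemma \ref{l:equivD12} then upgrades this to stationarity of $\bfU^\varepsilon$ in the sense of Definition \ref{D2}, and Corollary \ref{cor:00} yields that the law $\mu_\varepsilon:=\mathcal{L}(\bfU^\varepsilon(t))$ on $W^{k,p}(\mt)$ is independent of $t\geq 0$.

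To conclude, I will pass to the limit $\varepsilon\to 0$. By the Bochner-valued Lebesgue differentiation theorem, for $\p$-a.e.\ $\omega$ and Lebesgue-a.e.\ $t$, $\bfU^\varepsilon(\omega,t)\to\bfU(\omega,t)$ strongly in $W^{k,p}(\mt)$. Fubini's theorem then produces a set $E\subset[0,\infty)$ of full Lebesgue measure such that for every $t\in E$ the convergence holds $\p$-almost surely, and consequently $\mu_\varepsilon\to\mathcal{L}(\bfU(t))$ weakly on $W^{k,p}(\mt)$. Uniqueness of the weak limit, combined with the fact that $\mu_\varepsilon$ does not depend on $t$, forces $\mathcal{L}(\bfU(s))=\mathcal{L}(\bfU(t))$ for all $s,t\in E$; the conclusion for the pushforward under any measurable $F$ is immediate. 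The main obstacle is this Fubini-type interchange of the ``$t$ a.e.'' and ``$\p$-a.s.'' quantifiers in the Lebesgue differentiation step; the remaining ingredients---measurability and shift-equivariance of the mollification, preservation of stationarity under a deterministic measurable map, continuity of trajectories of $\bfU^\varepsilon$---are direct consequences of standard facts.
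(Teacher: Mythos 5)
Your argument is correct, and it is a more explicit (and arguably more rigorous) route than the paper's own very terse proof. The paper simply asserts that the evaluation map $\bfU\mapsto\bfU(t)$ is Borel on $L^q_{\mathrm{loc}}([0,\infty);W^{k,p}(\mt))$ for a.e. $t$, notes $\bfU(s)=(\mathcal{S}_{s-t}\bfU)(t)$, and concludes from the equality in law of $\bfU$ and $\mathcal{S}_{s-t}\bfU$; this asserted measurability is precisely the point that needs care (a pointwise evaluation is not intrinsically defined on $L^q$-equivalence classes), and it tacitly rests on exactly the Lebesgue-point and Fubini considerations you spell out. You instead construct a one-sided mollification $\bfU^\varepsilon$ with continuous trajectories, verify shift-equivariance, transfer stationarity through the measurable map as in the proof of Lemma~\ref{lem:stac}, invoke Lemma~\ref{l:equivD12} to obtain constancy of $\mathcal{L}(\bfU^\varepsilon(t))$ in $t$, and pass to the limit using Lebesgue differentiation and Fubini. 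The trade-off is that you rely on the continuous-process equivalence lemma as an intermediate, whereas the paper's (unexpanded) argument avoids it; what you gain is that every step is checkable without any a.e.-defined evaluation map being invoked by fiat. Two small remarks: Definition~\ref{D2} already yields time-constancy of the one-dimensional marginals of $\bfU^\varepsilon$ without needing Corollary~\ref{cor:00}; and the ``$F$'' in the statement is essentially redundant once $\mathcal{L}(\bfU(s))=\mathcal{L}(\bfU(t))$ is established, as you correctly note at the end.
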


\begin{proof}
Since the mapping $\bfU\mapsto \bfU(t) \mapsto F(\bfU(t))$ is measurable on $L^q_{\text{loc}}([0,\infty);W^{k,p}(\mt))$ for a.e. $t\in[0,\infty)$. For the same reasons, the mapping $\mathcal{S}_{s-t}:\bfU\mapsto \bfU(s) \mapsto F(\bfU(s))$ is measurable on $L^q_{\text{loc}}([0,\infty);W^{k,p}(\mt))$ for a.e. $s,t\in[0,\infty)$. Hence the claim follows from the equality of laws of $\bfU$ and $\mathcal{S}_{s-t}\bfU$.
\end{proof}

\begin{Remark}
Note that in view of Corollary \ref{cor:001} the stationarity in the sense of Definition \ref{D1} implies the following almost everywhere version of Definition \ref{D2}: if $\bfU$ is stationary on $L^q_{\mathrm{loc}}([0,\infty);W^{k,p}(\mt))$ in the sense of Definition \ref{D1} then the joint laws
$$\mathcal{L}(\bfU(t_1+\tau),\dots, \bfU(t_n+\tau)),\quad \mathcal{L}(\bfU(t_1),\dots, \bfU(t_n))$$
on $[W^{k,p}(\mt)]^n$ coincide for a.e. $\tau\geq0$, for a.e. $t_1,\dots,t_n\in [0,\infty)$.
\end{Remark}

\def\cprime{$'$} \def\ocirc#1{\ifmmode\setbox0=\hbox{$#1$}\dimen0=\ht0
  \advance\dimen0 by1pt\rlap{\hbox to\wd0{\hss\raise\dimen0
  \hbox{\hskip.2em$\scriptscriptstyle\circ$}\hss}}#1\else {\accent"17 #1}\fi}

\end{document}